\theoremstyle{plain}
\newtheorem{theorem}{Theorem}
\newtheorem{corollary}{Corollary}
\newtheorem{lemma}{Lemma}
\newtheorem*{TA}{Theorem A}
\newtheorem*{TB}{Theorem B}
\newtheorem*{TC}{Theorem C}
\theoremstyle{definition}
\DeclarePairedDelimiter{\ceil}{\lceil}{\rceil}
\DeclarePairedDelimiter{\floor}{\lfloor}{\rfloor}
\theoremstyle{remark}
\newtheorem{remark}{Remark}
\numberwithin{equation}{section}
\begin{document}

\title[Regularity and decay of fifth order KdV]{Propagation of regularity and persistence of decay for fifth order dispersive models}
\author{Jun-ichi Segata}
\address[J. Segata]{Mathematical Institute\\
Tohoku University\\
Aoba, Sendai 980-8578\\
Japan.}
\email{segata@math.tohoku.ac.jp}

\author{Derek L. Smith}
\address[D. L. Smith]{Department  of Mathematics\\
University of California\\
Santa Barbara, CA 93106\\
USA.}
\email{dls@math.ucsb.edu}
\keywords{Korteweg-de Vries  equation,  smoothing effects}
\subjclass{Primary: 35Q53. Secondary: 35B05}
\begin{abstract} 
This paper considers the initial value problem for a class of fifth order dispersive models containing the fifth order KdV equation
\begin{equation*}
\partial_tu - \partial_x^5u
	- 30u^2\partial_xu + 20\partial_xu\partial_x^2u + 10u\partial_x^3u = 0.
\end{equation*}
The main results show that regularity or polynomial decay of the data on the positive half-line yields regularity in the solution for positive times.
\end{abstract}
\maketitle

\begin{section}{Introduction}\label{S:1}

In this work we study propagation of regularity and persistence of decay results for a class of fifth order dispersive models. For concreteness, the main theorems are stated for initial value problems of the form
\begin{equation} \label{KDV5G}
\begin{cases}
	\partial_tu - \partial_x^5u
		+ c_1u^2\partial_xu + c_2\partial_xu\partial_x^2u + c_3u\partial_x^3u = 0,
		\qquad x,t\in\mathbb{R}, \\
	u(x,0) = u_0(x),
\end{cases}
\end{equation}
where $c_{j}$ are real constants, 
$u:\mathbb{R}\times\mathbb{R}\to\mathbb{R}$ 
is an unknown function and $u_0:\mathbb{R}\to
\mathbb{R}$ is a given function. 
Eq. (\ref{KDV5G}) contains the specific equation
\begin{equation} \label{KDV5}
\partial_tu - \partial_x^5u
	- 30u^2\partial_xu + 20\partial_xu\partial_x^2u + 10u\partial_x^3u = 0
\end{equation}
which is the third equation in the sequence of nonlinear dispersive equations
\begin{equation} \label{KDVH}
\partial_tu + \partial_x^{2j+1}u + Q_j(u,\partial_xu,\dots,\partial_x^{2j-1}u) = 0,
	\quad j\in\mathbb{Z}^+,
\end{equation}
known as the KdV hierarchy. Here the polynomials $Q_j$ are chosen so that equation \eqref{KDVH} has the Lax pair formulation
\begin{equation*}
\partial_tu = [B_j;L]u
\end{equation*}
for $L=\frac{d^2}{dx^2}-u(x)$ the Schr\"odinger operator \cite{MR0235310}. The first two equations in the hierarchy are
\begin{equation}
\partial_tu - \partial_xu = 0
\end{equation}
and the KdV equation
\begin{equation} \label{KDV}
\partial_tu + \partial_x^3u + u\partial_xu = 0.
\end{equation}

With only slight modifications concerning the hypothesis on the initial data, the techniques in this paper apply to a large class of fifth order equations including the following models arising from mathematical physics:
\begin{equation} \label{E2}
\partial_tu + \partial_xu + c_1u\partial_xu + c_2\partial_x^3u
	+ c_3\partial_xu\partial_x^2u + c_4u\partial_x^3u + c_5\partial_x^5u = 0
\end{equation}
modelling the water wave problem for long, small amplitude waves over shallow bottom \cite{MR755731}, a model describing short and long wave interaction \cite{MR0463715}
\begin{equation} \label{E3}
\partial_tu - 2\partial_xu\partial_x^2u - u\partial_x^3u + \partial_x^5u = 0,
\end{equation}
and Lisher's model for motion of a lattice of anharmonic oscillators \cite{Lisher1974}
\begin{equation} \label{E4}
\partial_tu + (u+u^2)\partial_xu
	+ (1+u)(\partial_xu\partial_x^2u + u\partial_x^3u) + \partial_x^5u = 0.
\end{equation}
See also \cite{MR1216734} and references therein.

Following Kato's definition \cite{MR759907}, the 
initial value problem (IVP) 
\eqref{KDV5G} is said to be {\em locally well-posed} in the Banach space $X$ if for every $u_0 \in X$ there exists $T>0$ and a unique solution $u(t)$ satisfying
\begin{equation} \label{LWP}
u \in C([0,T] ; X) \cap Y_T,
\end{equation}
where $Y_T$ is an auxillary function space. Moreover, the solution map $u_0 \mapsto u$ is continuous 
from $X$ into the class \eqref{LWP}. If $T$ can be taken arbitrarily large, the IVP \eqref{KDV5G} is said to be {\em globally well-posed}. The persistence condition \eqref{LWP} states that the solution curve describes a dynamical system.


It is natural to study the IVP \eqref{KDV5G} in the Sobolev spaces
\begin{equation*}
H^s(\mathbb{R}) = (1-\partial_x^2)^{-s/2}L^2(\mathbb{R}),	\quad	s\in\mathbb{R},
\end{equation*}
having norm
\begin{equation*}
\|f\|_{H^s} = \|J^sf\|_2 \sim \|f\|_2 + \|D^sf\|_2.
\end{equation*}
The homogeneous derivative $D$ and its inhomogeneous counterpart $J$ are defined via the Fourier multipliers
\begin{equation*}
\widehat{D^sf}(\xi) = |\xi|^{s}\hat{f}(\xi)
	\qquad\text{and}\qquad
	\widehat{J^sf}(\xi) = \langle\xi\rangle^{s}\hat{f}(\xi),
	\quad s\in\mathbb{R},
\end{equation*}
where $\langle x \rangle = (1+x^2)^{1/2}$. The weighted spaces
\begin{equation*}
X_{s,m} = H^s(\mathbb{R}) \cap L^2(|x|^m \; dx)
	\quad s\in\mathbb{R}, m\in\mathbb{Z}^+\cup\{0\}
\end{equation*}
also appear in our analysis. Additionally, we use the notation $x_+=\max\{0,x\},x_{-}=\min\{0,x\}$ and write $A \lesssim B$ to denote $A \leq cB$ when the value of the fixed constant $c$ is immaterial. The floor and ceiling functions are denoted by $\floor{x}$ and $\ceil{x}$, respectively.


The persistence property \eqref{LWP} doesn't preclude all smoothing effects. For step-data, Murray 
\cite{MR0470533} proved the existence of solutions to the initial value problem for the KdV equation \eqref{KDV} in the class $C^\infty(\{x,t : x\in\mathbb{R}, \; t>0)\})$ which weakly recover the initial data. T. Kato \cite{MR759907} described this {\em quasiparabolic smoothing} effect as stemming from the unidirectional dispersion inherent in the equation. He obtained a similar result for data having exponential decay on the positive half-line. 
The Kato estimates occur in the asymmetric spaces
\begin{equation*}
H^s(\mathbb{R}) \cap L_\beta^2(\mathbb{R}),
	\quad s\geq0,\ \beta>0,
\end{equation*}
where
\begin{equation*}
L_\beta^2(\mathbb{R}) = L^2(e^{\beta x} \; dx),
\end{equation*}
in which the operator $\partial_t+\partial_x^3$ is formally equivalent to $\partial_t+(\partial_x-\beta)^3$. The use of asymmetric spaces leads to a result which is irreversible in time. Isaza, Linares and Ponce \cite{MR3279353} extended the quasiparabolic smoothing effect to a large class of fifth order equations.
\begin{TA}$($Isaza, Linares and Ponce \cite{MR3279353}$)$
Let $u \in C([0,T];H^6(\mathbb{R}))$ be a solution of the IVP associated to the equation
\begin{equation}
\partial_tu - \partial_x^5u + Q_0(u,\partial_xu,\partial_x^2u)\partial_x^3u + Q_1(u,\partial_xu,\partial_x^2u)= 0
\end{equation}
corresponding to initial data $u_0 \in H^6(\mathbb{R}) \cap L^2(e^{\beta x} \; dx), \beta>0$, with
\begin{equation}
Q_0 = \sum_{1 \leq i+j+k \leq N} a_{i,j,k} u^i(\partial_xu)^j(\partial_x^2u)^k,
	\quad N\in\mathbb{Z}^+, N \geq 1,\  a_{i,j,k}\in\mathbb{R}
\end{equation}
and
\begin{equation}
Q_1 = \sum_{2 \leq i+j+k \leq M} b_{i,j,k} u^i(\partial_xu)^j(\partial_x^2u)^k
	\quad M\in\mathbb{Z}^+, M \geq 2,\ 
	 b_{i,j,k}\in\mathbb{R}.
\end{equation}
Then
\begin{equation*}
e^{\beta x}u \in C([0,T];L^2(\mathbb{R})) \cap C((0,T);H^\infty(\mathbb{R})),
\end{equation*}
and
\begin{equation*}
\|e^{\beta x}u(t)\|_2 \leq c\|e^{\beta x}u_0\|_2,
	\quad t\in[0,T].
\end{equation*}
\end{TA}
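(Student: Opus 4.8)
The plan is to exploit the exponential weight to convert the anti-dissipative fifth order operator into one with a genuinely parabolic (dissipative) leading correction. Setting $v = e^{\beta x}u$ and using the identity $e^{\beta x}\partial_x^k(e^{-\beta x}\,\cdot\,) = (\partial_x - \beta)^k$, the linear part of the equation for $v$ becomes $\partial_t v = (\partial_x - \beta)^5 v + \cdots$. Expanding $(\partial_x-\beta)^5 = \partial_x^5 - 5\beta\partial_x^4 + 10\beta^2\partial_x^3 - 10\beta^3\partial_x^2 + 5\beta^4\partial_x - \beta^5$ and testing against $v$, the odd-order terms drop out, while $-5\beta\langle v,\partial_x^4 v\rangle = -5\beta\|\partial_x^2 v\|_2^2$ furnishes, for $\beta > 0$, a dissipative term of the correct sign. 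This is the source of all the smoothing; the remaining weight-generated terms, $10\beta^3\|\partial_x v\|_2^2$ and $-\beta^5\|v\|_2^2$, are lower order and absorbed by interpolation.

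First I would establish the basic weighted bound $\sup_{[0,T]}\|v(t)\|_2\le c\|v(0)\|_2$ together with the integrated smoothing $\int_0^T\|\partial_x^2 v\|_2^2\,dt<\infty$, which already encodes a gain of two derivatives in the weighted space. Computing $\tfrac{d}{dt}\|v\|_2^2 = 2\int e^{2\beta x}u\,\partial_t u\,dx$, the leading nonlinear contribution is $\int e^{2\beta x}u\,Q_0\partial_x^3 u\,dx$. Integrating by parts once to move a derivative off $\partial_x^3 u$ and distributing the two factors $e^{\beta x}$, each resulting term takes the schematic form $\langle e^{\beta x}\partial_x^a u, e^{\beta x}\partial_x^b u\rangle$ with $a+b\le 3$, which is bounded by $\epsilon\|\partial_x^2 v\|_2^2 + C_\epsilon\|v\|_2^2$ after interpolation, since $e^{\beta x}\partial_x^j u = (\partial_x-\beta)^j v$. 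The coefficients $Q_0$ and $\partial_x Q_0$ are controlled in $L^\infty$ by $\|u\|_{H^6}$ via Sobolev embedding. The term $Q_1$, of total degree at least two, always carries a spare factor from $\{u,\partial_x u,\partial_x^2 u\}$, so it splits as (bounded)$\,\times\langle e^{\beta x}u, e^{\beta x}\partial_x^b u\rangle$ and is handled identically. Choosing $\epsilon$ small to absorb $\|\partial_x^2 v\|_2^2$ into the dissipative term yields $\tfrac{d}{dt}\|v\|_2^2 + c\|\partial_x^2 v\|_2^2 \le C\|v\|_2^2$, and Gronwall gives both assertions, in particular the stated inequality $\|e^{\beta x}u(t)\|_2\le c\|e^{\beta x}u_0\|_2$.

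To reach $C((0,T);H^\infty)$ I would bootstrap with time weights. Differentiating the equation, I run the same weighted energy estimate for $\partial_x^m u$ against the test weight $t^{\ell}e^{2\beta x}$; the identity $\tfrac{d}{dt}\bigl(t^\ell\|e^{\beta x}\partial_x^m u\|_2^2\bigr) = \ell t^{\ell-1}\|e^{\beta x}\partial_x^m u\|_2^2 + t^\ell\tfrac{d}{dt}\|e^{\beta x}\partial_x^m u\|_2^2$ forces the boundary contribution at $t=0$ to vanish, so only the integrated smoothing from the previous induction level feeds the right-hand side. Choosing $\ell$ to grow with $m$ and invoking the lower-order weighted bounds already established (for $t>0$) to estimate the Leibniz expansion of $\partial_x^m(Q_0\partial_x^3 u + Q_1)$, one gains two more weighted derivatives at each stage and propagates them pointwise in $t>0$, giving $e^{\beta x}u\in C((0,T);H^\infty)$. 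The endpoint continuity $e^{\beta x}u\in C([0,T];L^2)$ follows from the equation together with the uniform bound by a standard weak-continuity plus norm-continuity argument.

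The main obstacle is rigor rather than the formal mechanism: $v=e^{\beta x}u$ is a priori only in $L^2$, and the weight $e^{2\beta x}$ is unbounded, so the integrations by parts above cannot be performed directly. I would therefore run every estimate with a bounded, smooth truncation $\varphi_N\uparrow e^{\beta x}$ satisfying $|\varphi_N'|\lesssim\beta\varphi_N$ (and analogously for higher derivatives), for which all boundary terms vanish and all integrals converge by the $H^6$ regularity of $u$; the delicate point is that the leading dissipative term must remain negative definite and the error terms produced by the truncation must stay uniformly controllable, so that the constants are independent of $N$ and the monotone limit $N\to\infty$ recovers the stated estimates. The second source of difficulty, the combinatorial control of the high-order nonlinear terms in the induction, is resolved by the fact that the smoothing is self-improving: each induction step has access to all the weighted regularity produced at earlier steps for $t>0$.
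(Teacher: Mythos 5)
You should note at the outset that the paper itself offers no proof of Theorem A: it is quoted as background from Isaza--Linares--Ponce \cite{MR3279353}, and the only in-paper material touching it is the heuristic in Section \ref{S:1} (the formal equivalence of $\partial_t+\partial_x^3$ with $\partial_t+(\partial_x-\beta)^3$ in $L^2(e^{\beta x}\,dx)$) together with the weighted energy inequality \eqref{ENERGY} of Lemma 1, itself borrowed from \cite{MR3279353}. Measured against that, your mechanism is the right one and is essentially the argument of the cited source: your conjugation $v=e^{\beta x}u$, with $-5\beta\langle v,\partial_x^4v\rangle=-5\beta\|\partial_x^2v\|_2^2$ supplying the parabolic term (the sign is correct for $\beta>0$ because the equation carries $-\partial_x^5$), is the same computation as applying \eqref{ENERGY} with a weight of truncated exponential type, for which $\psi^{(5)}$ and $(\psi''')^2/\partial_x\psi$ are both comparable to $\beta^5\psi$; your truncation class $|\varphi_N'|\lesssim\beta\varphi_N$ is exactly the admissible-weight device that makes the formal computation rigorous. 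The base-level estimate, the absorption of $10\beta^3\|\partial_xv\|_2^2$ by interpolation, the treatment of $Q_0\partial_x^3u$ after one integration by parts using $\|u\|_{L_T^\infty H^6}$, and the $t^\ell$-weighted bootstrap are all sound and standard.

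There is, however, one step that would fail as written: the inductive estimate of the Leibniz expansion at high order, which you dispatch with ``invoking the lower-order weighted bounds already established (for $t>0$).'' For large $m$, $\partial_x^m\bigl(Q_0\,\partial_x^3u\bigr)$ contains terms $\partial_x^{c_1}u\cdots\partial_x^{c_r}u\,\partial_x^{m+3-j}u$ in which \emph{several} factors have order exceeding $5$. Unweighted control of $\partial_x^au$ for $a\geq6$ is never available --- the data are only in $H^6$, and the smoothing you generate is weighted, hence says nothing as $x\to-\infty$ --- while the total weight in the pairing is only $e^{2\beta x}$, of which one factor $e^{\beta x}$ is already committed to $\partial_x^mu$ entering the energy. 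Each inductive bound carries the \emph{full} weight $e^{\beta x}$, so you cannot hand it simultaneously to two or more coefficient factors; e.g.\ a term like $\partial_x^{c_1}u\,\partial_x^{c_2}u\,\partial_x^mu\,e^{2\beta x}$ with $c_1,c_2\geq6$ is not estimable by your stated inputs. The repair is a weighted interpolation lemma of Kato type: from $u\in L_T^\infty H^6$ and the top-order weighted bound $\|e^{\beta x}\partial_x^ku\|_2$, deduce bounds on $e^{\theta\beta x}\partial_x^ju$ (in $L^2$ and, via one more derivative, in $L^\infty$) at intermediate orders $j$ with fractional weights $\theta\in(0,1)$ chosen so the exponents allotted to all factors sum to at most $2\beta$. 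Such interpolation in weighted norms is precisely what the argument in \cite{MR3279353} (following Kato) supplies; with it your bootstrap closes, so the gap is real but repairable, and the overall architecture is otherwise correct.
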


T. Kato \cite{MR759907} 
demonstrated the existence of weak global 
solutions $u$ to the KdV equation \eqref{KDV} 
corresponding to initial data in $L^2(\mathbb{R})$. 
A key step in his proof is the a priori estimate of $\|u\|_{H^1(-R,R)}$ in terms of $\|u_0\|_2$. In addition, his approach shows the following {\em local smoothing} effect.
\begin{TB}$($T. Kato \cite{MR759907}$)$
Let $s>3/2$ and $0<T<\infty$. If $u \in C([0,T];
H^s(\mathbb{R}))$ is the solution to \eqref{KDV}, then
\begin{equation*}
u \in L^2([0,T];H^{s+1}(-R,R))
	\quad \text{for any $0<R<\infty$,}
\end{equation*}
with the associated norm depending only on $\|u_0\|_{H^s}$, $R$ and $T$.
\end{TB}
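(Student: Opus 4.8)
The plan is to run a weighted energy estimate that converts the anti-self-adjoint dispersive operator $\partial_x^3$ into a positive local gain of one derivative, following T. Kato. Fix $R>0$ and choose a weight $\phi\in C^\infty(\R)$ that is bounded and non-decreasing, with $\phi,\phi',\phi'',\phi'''$ all bounded and $\phi'(x)\geq c>0$ for $x\in(-R,R)$ (for instance a smoothed, strictly increasing ramp). The point of the monotonicity is that multiplying \eqref{KDV} by $2u\phi$ and integrating by parts produces the identity
\[
\frac{d}{dt}\int \phi u^2\,dx + 3\int \phi'(\partial_x u)^2\,dx = \int \phi''' u^2\,dx + \tfrac{2}{3}\int \phi' u^3\,dx,
\]
in which the second term on the left is the \emph{good} term: since $\phi'\geq c$ on $(-R,R)$ it controls $\int_{-R}^R(\partial_x u)^2$. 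The right-hand side is harmless because $\phi''',\phi'$ are bounded and $\|u\|_\infty\lesssim\|u\|_{H^s}$, so integrating in $t$ already yields $u\in L^2([0,T];H^1(-R,R))$. This is the base case and displays the full mechanism.

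To gain one derivative at the level of $H^s$, I would apply $J^s$ to \eqref{KDV} and repeat the estimate on $v:=J^su$, which solves $\partial_tv+\partial_x^3v+J^s(u\partial_xu)=0$. Multiplying by $2\phi v$ and integrating by parts in the dispersive term exactly as above --- note that only $\phi$ and $v$ are differentiated, so no fractional operators intervene here --- gives
\[
\frac{d}{dt}\int\phi v^2\,dx + 3\int\phi'(\partial_x v)^2\,dx = \int\phi''' v^2\,dx - 2\int\phi\,v\,J^s(u\partial_xu)\,dx.
\]
The good term is now $\int\phi'(\partial_x v)^2 = \int\phi'(\partial_x J^su)^2$, whose control on $(-R,R)$ is precisely $\|u\|_{H^{s+1}(-R,R)}^2$ after adding the trivially bounded $\int_{-R}^R(J^su)^2$.

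Everything then reduces to the nonlinear term. I would split $J^s(u\partial_xu)=u\,\partial_xJ^su + [J^s,u]\partial_xu$, using $J^s\partial_x=\partial_xJ^s$. The transport piece pairs with $\phi v$ to form a perfect derivative, $2\int\phi u\,v\,\partial_x v = -\int(\phi'u+\phi\,\partial_xu)v^2$, bounded by $(\|u\|_\infty+\|\partial_xu\|_\infty)\|v\|_2^2\lesssim\|u\|_{H^s}^3$. The commutator piece is handled by the Kato--Ponce estimate $\|[J^s,u]\partial_xu\|_2\lesssim\|\partial_xu\|_\infty\|u\|_{H^s}$, so that $|2\int\phi v[J^s,u]\partial_xu|\lesssim\|u\|_{H^s}^2\|\partial_xu\|_\infty\lesssim\|u\|_{H^s}^3$. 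Here the hypothesis $s>3/2$ enters decisively: it is exactly the condition $s-1>1/2$ under which $H^{s-1}\hookrightarrow L^\infty$ guarantees $\|\partial_xu\|_\infty\lesssim\|u\|_{H^s}$. Writing $M:=\sup_{[0,T]}\|u(t)\|_{H^s}$, which is finite and controlled by $\|u_0\|_{H^s}$ since $u\in C([0,T];H^s)$, the right-hand side is $\lesssim M^2+M^3$, and integrating the energy identity over $[0,T]$ yields
\[
3\int_0^T\!\!\int\phi'(\partial_x J^su)^2\,dx\,dt \leq \|u_0\|_{H^s}^2 + CT(M^2+M^3).
\]
Restricting to $(-R,R)$ gives the claim, with the bound depending only on $\|u_0\|_{H^s}$, $R$, and $T$.

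The main obstacle is one of rigor rather than of the estimate itself: the formal computation requires $\partial_x^3 J^su\in L^2$, which is not guaranteed for an $H^s$ solution. I would therefore carry out the energy identity for smooth solutions (data in $H^\infty$), where every integration by parts is justified, obtain the bound with constants depending only on $\|u_0\|_{H^s}$, $R$, $T$, and then pass to the limit for general $u_0\in H^s$ using a density argument together with the continuity of the solution map guaranteed by local well-posedness. A secondary technical point is the precise form of the Kato--Ponce commutator bound, but this is a standard input.
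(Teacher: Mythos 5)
Your proposal is correct, and its engine is the same one the paper sketches: a weighted energy identity in which the dispersive term, after integration by parts against a bounded nondecreasing weight, produces the good term $3\int\phi'(\partial_x\,\cdot\,)^2\,dx$ that yields one local derivative. The genuine difference is in how the derivatives are counted. The paper's identity \eqref{KATO} is stated at integer level $k$, with the nonlinear contribution organized as $\int\partial_x^ku\,[\partial_x^k;u]\partial_xu\,\psi\,dx$ and the commutator controlled by a Leibniz expansion, so that the sketch literally covers integer regularity and fractional $s$ must be reached separately (e.g.\ by interpolation, as in Kato's original treatment). You instead conjugate the equation by $J^s$ and estimate $[J^s,u]\partial_xu$ via the Kato--Ponce commutator bound, which proves the statement directly for every real $s>3/2$ in one pass and makes transparent where the threshold enters ($H^{s-1}\hookrightarrow L^\infty$). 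What this costs you is the fractional machinery: besides Kato--Ponce itself, your closing step silently identifies control of $J^su$ in $H^1(-R,R)$ with control of $u$ in $H^{s+1}(-R,R)$; since $J^{-s}$ is nonlocal, this needs a (standard) commutator/pseudodifferential localization argument, say $\theta J^{-s}w=J^{-s}(\theta w)+[\theta,J^{-s}]w$ with $[\theta,J^{-s}]$ of order $-s-1$, run on a slightly larger interval --- worth a line in a complete write-up, though it is routine. Your regularization-and-limit step for justifying the integrations by parts matches exactly what the paper does in its own proofs (smooth data $u_0^\mu=\rho_\mu\ast u_0$, uniform bounds, continuity of the solution map), so no gap there. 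In short: same mechanism, but your fractional route trades the paper's elementary integer-$k$ bookkeeping for a uniform treatment of all $s>3/2$.
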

Roughly, the proof follows by observing that a smooth solution $u$ to the IVP associated to the KdV equation \eqref{KDV} satisfies the identity
\begin{align} \label{KATO}
&\frac{d}{dt} \int (\partial_x^ku)^2\psi \; dx + 3 \int (\partial_x^{k+1}u)^2\psi' \; dx \notag \\
	&\qquad\qquad
	= \int (\partial_x^ku)^2\psi''' \; dx
		+ \int \partial_x(\psi u)(\partial_x^ku)^2 \; dx
		+ \int \partial_x^ku [\partial_x^k;u]\partial_xu\psi \; dx.
\end{align}
for $k\in\mathbb{Z}^+$. Selecting $\psi=\psi(x)$ to be a sufficiently smooth, nonnegative, nondecreasing cutoff function, integration of the above identity in time yields local estimates of $\partial_x^{k+1}u$ as each term on right-hand side can be controlled by $\|u\|_{L_T^\infty H^k}$.

Isaza, Linares and Ponce applied Kato's argument to study the propagation of regularity and persistence of decay of solutions to the  $k$-generalized KdV and Benjamin-Ono equations in \cite{IsazaLinaresPonce2014a} and \cite{IsazaLinaresPonce2014b}, respectively. Also working in asymmetric spaces, they observed that for a solution $u$ to the KdV equation corresponding to data $u_0 \in H^{s}(\mathbb{R})$ with $s>3/4$, if $\|x^{n/2}u_0\|_{L^2(0,\infty)}$ for some $n\in\mathbb{Z}^+$, then for every $x_0\in\mathbb{R}$, $u(\cdot,t) \in H^n(x_0,\infty)$ for positive times. More succinctly, one-sided decay on the initial data yields regularity in the solution. In this paper we extend their work to fifth order dispersive models. Before stating our results we review the local well-posedness theory for \eqref{KDV5G} and related models.


Utilizing the Lax pair formulation, initial value problems associated to equations in the KdV hierarchy \eqref{KDVH} can be solved in a space of rapidly decaying functions using the inverse scattering method \cite{MR0336122}. This method does not apply to dispersive equations of a more general form.

While studying the models \eqref{KDV5G}, \eqref{E2}, \eqref{E3} and \eqref{E4}, Ponce 
\cite{MR1216734} remarked that the use of dispersive estimates appears essential to attain local well-posedness in Sobolev spaces. Using the energy method, sharp linear estimates and parabolic regularization, in \cite{MR1216734} Ponce 
proved local well-posedness for the initial value problems associated to these equations in $H^s(\mathbb{R})$, $s\geq4$ .

Kenig, Ponce and Vega investigated the class
\begin{equation} \label{KDVGH}
\begin{cases}
	\partial_tu + \partial_x^{2j+1}u + P(u,\partial_xu,\dots,\partial_x^{2j}u) = 0,
		\qquad x,t\in\mathbb{R},\\
	u(x,0) = u_0(x),
\end{cases}
\end{equation}
with $j\in\mathbb{Z}^+$ and $P : \mathbb{R}^{2j+1}\rightarrow\mathbb{R}$ (or $\mathbb{C}^{2j+1}\rightarrow\mathbb{C}$) a polynomial having no constant or linear terms. Using the contraction principle, they established in \cite{MR1321214} and \cite{MR1195480} that for a given equation in the class \eqref{KDVGH} there exists a positive real number $s_0$ and nonnegative integer $m_0$ depending only on the form of the polynomial $P$ such that the corresponding IVP is locally well-posed in the weighted space $X_{s,m}$ for all $m\in\mathbb{Z}^+$, $m \geq m_0$ and $s \geq \max\{s_0,jm\}$. Thus equations of the form \eqref{KDVGH} preserve the Schwarz class. The use of weighted spaces stems from the observation that $[L;\Gamma]=0$ for the vector fields
\begin{equation*}
L=\partial_t + \partial_x^{2j+1}
	\quad\text{and}\quad
	\Gamma = x - (2j+1)t \partial_x^{2j}.
\end{equation*}
Given that each term of $P$ has ``enough" factors, it may be that the corresponding IVP is globally well-posed, that no weight is necessary or both. For further comments, see \cite{LiPo2015}.

Following \cite{MR1885293} and \cite{MR1944575}, Pilod 
\cite{MR2446185} showed that certain initial value problems in the class \eqref{KDVGH} are in some sense ill-posed. In particular, if $P$ contains the term $u\partial_x^ku$ for $k>j$, then the solution map $H^s(\mathbb{R}) \ni u_0 \mapsto u \in C([0,T];H^s(\mathbb{R}))$ is not $C^2$ at the origin for any $s\in\mathbb{R}$. For equations of the form \eqref{KDV5G}, Kwon demonstrated that the solution map is not even uniformly continuous by using the arguments of \cite{MR2172940} and \cite{MR2350033}. All of these facts result from uncontrollable interactions when both high and low frequencies are present in the initial data. Thus, in contrast to the KdV \eqref{KDV}, equations of the form \eqref{KDV5G} cannot be solved using the contraction principle in $H^s(\mathbb{R})$.

Differences between \eqref{KDV5G} and \eqref{KDV} also arise when applying the energy estimate method. Note that after integrating by parts, smooth solutions $u$ to \eqref{KDV5G} satisfy
\begin{align} \label{LOSS}
&\frac{d}{dt} \int(\partial_x^ku)^2\psi(x) \; dx + 2 \int(\partial_x^{k+2}u)^2\psi' \; dx \notag \\
	&\qquad\qquad
	\lesssim \|\partial_x^3u\|_\infty \int(\partial_x^ku)^2\psi(x) \; dx
		+ \left|\int\partial_xu(\partial_x^{k+1}u)^2\psi \; dx\right|
		+ \cdots
\end{align}
for $k\in\mathbb{Z}^+$. After integrating in time, the right-hand side cannot be estimated in terms of $\|u\|_{L_T^\infty H^k}$. Kwon \cite{MR2455780} 
introduced a corrected energy and refined Strichartz estimate to overcome this {\em loss of derivatives} and obtained the following result.
\begin{TC} $($Kwon \cite{MR2455780}$)$ \label{THM_D}
Let $s>5/2$. For any $u_0 \in H^s(\mathbb{R})$ there exists a time $T \gtrsim \|u_0\|_{H^s}^{-10/3}$ and a unique real-valued solution $u$ for the IVP \eqref{KDV5G} satisfying
\begin{equation} \label{LWP_K5}
u \in C([0,T];H^s(\mathbb{R}))
	\qquad\text{and}\qquad
	\partial_x^3u \in L^1([0,T];L^\infty(\mathbb{R})).
\end{equation}
\end{TC}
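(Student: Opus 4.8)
The contraction-mapping approach is ruled out here, so the plan is to build the solution by the energy method applied to a parabolically regularized problem and then to pass to the limit. Concretely, I would first replace \eqref{KDV5G} by
\[
\partial_tu^\mu - \partial_x^5u^\mu - \mu\partial_x^6u^\mu + c_1(u^\mu)^2\partial_xu^\mu + c_2\partial_xu^\mu\partial_x^2u^\mu + c_3u^\mu\partial_x^3u^\mu = 0, \qquad \mu>0,
\]
whose linear part generates a strongly smoothing semigroup (its symbol has modulus $e^{-\mu t\xi^6}$); since every nonlinear term carries at most three derivatives, a standard Duhamel/fixed-point argument produces, for each fixed $\mu$, a smooth local solution $u^\mu$ with data $u_0$. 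The whole game is then to establish a priori bounds on $u^\mu$ that are \emph{uniform in $\mu$} on a time interval $[0,T]$ whose length depends only on $\|u_0\|_{H^s}$.

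For these bounds I would apply $J^s$ and compute $\tfrac{d}{dt}\|J^su^\mu\|_2^2$. The regularizing term contributes only the nonpositive quantity $-2\mu\|\partial_x^3J^su^\mu\|_2^2$, which is discarded, and integration by parts removes the linear fifth-order term. As in the localized identity \eqref{LOSS}, the nonlinearity leaves a harmless factor $\|\partial_x^3u\|_\infty\|J^su\|_2^2$ together with a genuinely derivative-losing term of the schematic form $\int\partial_xu\,(\partial_x^{s+1}u)^2\,dx$, which is one derivative above the $H^s$ energy. In \eqref{LOSS} this term is absorbed by the local smoothing gain $\int(\partial_x^{s+2}u)^2\psi'$, but globally (that is, with $\psi\equiv1$, so $\psi'\equiv0$) no such gain survives. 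The corrected energy is designed precisely to eliminate it: I would add to $\|J^su\|_2^2$ a lower-order correction — a weighted bilinear expression in $\partial_x^su$ and $\partial_x^{s-1}u$ with a coefficient depending on $u$ — whose time derivative cancels the surviving loss-of-derivative contribution, so that the modified functional $E_s(t)\sim\|u(t)\|_{H^s}^2$ obeys
\[
\frac{d}{dt}E_s(t)\lesssim\bigl(1+\|\partial_x^3u\|_\infty\bigr)\,E_s(t).
\]

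The main obstacle, and the crux of the theorem, is to control $\int_0^T\|\partial_x^3u\|_\infty\,dt$ at the regularity $s>5/2$. A direct Sobolev estimate $\|\partial_x^3u\|_\infty\lesssim\|u\|_{H^s}$ would force $s>7/2$; reaching $s>5/2$ requires exploiting the dispersion through a refined Strichartz inequality. I would partition $[0,T]$ into $\sim T/\delta$ subintervals of length $\delta$, apply the linear fifth-order Strichartz and local smoothing estimates frequency-by-frequency on each piece (in the manner of Koch--Tzvetkov and Kenig--Koenig), and sum, obtaining a bound of the form
\[
\|\partial_x^3u\|_{L^1([0,T];L^\infty)}\lesssim T^{\theta}\,P\bigl(\|u\|_{L^\infty([0,T];H^s)}\bigr)
\]
for some $\theta>0$ and a polynomial $P$; the net effect is a gain of roughly one derivative over the Sobolev threshold, which is exactly what brings the requirement down to $s>5/2$. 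Feeding this into the Gronwall inequality above and bootstrapping closes the uniform a priori estimate, the explicit exponent $-10/3$ emerging from balancing the time power $T^\theta$ against the data size $\|u_0\|_{H^s}$; this step also delivers the auxiliary regularity $\partial_x^3u\in L^1([0,T];L^\infty)$ recorded in \eqref{LWP_K5}.

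With uniform bounds in hand, I would extract a weak-$\ast$ convergent subsequence of $\{u^\mu\}$ and upgrade the convergence by a Bona--Smith argument (comparing solutions issuing from smoothed data) to obtain a genuine solution $u\in C([0,T];H^s)$, with the persistence of the $L^1([0,T];L^\infty)$ norm of $\partial_x^3u$ inherited from the uniform estimate. Uniqueness and continuous dependence on the data then follow from the same corrected-energy and refined-Strichartz machinery applied to the difference of two solutions, where the loss of derivatives is again neutralized by the correction and the $L^1([0,T];L^\infty)$ control of $\partial_x^3u$ supplied by \eqref{LWP_K5}.
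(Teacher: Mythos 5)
The paper does not prove Theorem C itself --- it is quoted directly from Kwon \cite{MR2455780} --- and your outline reconstructs Kwon's actual argument in all essentials: an energy method made uniform via a cubic correction term (of the same type as the corrections $\frac{d}{dt}\int u(\partial_x^{l-1}u)^2\chi\,dx$ the paper borrows in Section \ref{S:3}) to cancel the derivative-losing term $\int\partial_xu(\partial_x^{s+1}u)^2\,dx$, a frequency-localized refined Strichartz estimate in the Koch--Tzvetkov style to control $\|\partial_x^3u\|_{L^1([0,T];L^\infty)}$ at the regularity $s>5/2$ (where plain Sobolev embedding would demand $s>7/2$), and a Bona--Smith limiting argument for continuity of the flow, with uniqueness from the corrected-energy estimate on differences. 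Your sketch is correct and follows essentially the same route as the cited proof, so there is nothing to flag beyond the schematic liberties you already acknowledge.
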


\begin{remark}
A loss of derivatives can occur for equations for which LWP can be obtained in $H^s(\mathbb{R})$ using the contraction principle (see Section \ref{S:7}).
\end{remark}

Using an auxillary Bourgain space introduced in \cite{MR1209299} \cite{MR1215780}, 
the local well-posedness of the IVP \eqref{KDV5G} in the energy space $H^2(\mathbb{R})$ was established simultaneously by Kenig and Pilod \cite{KenigPilod2012} and Guo, Kwak and Kwon \cite{MR3096990}. Thus global well-posedness follows in the Hamiltonian case, i.e., when $c_{2}=2c_{3}$.


Our main contribution is the incorporation of Kwon's corrected energy and refined Strichartz estimate into the iterative argument used in \cite{IsazaLinaresPonce2014a} and \cite{IsazaLinaresPonce2014b}. We first describe the propagation of one-sided regularity exhibited by solutions to the IVP \eqref{KDV5G} provided by Theorem C.

\begin{theorem} \label{PROP} 
Let $s>5/2$. 
Suppose $u_0 \in H^{s}(\mathbb{R})$ and for some $l \in \mathbb{Z}^+, x_0 \in \mathbb{R}$
\begin{equation}
\|\partial_x^lu_0\|_{L^2(x_0,\infty)}^2
	= \int_{x_0}^\infty (\partial_x^lu_0)^2(x) \; dx
	< \infty.
\end{equation}
Then the solution $u$ of IVP \eqref{KDV5G} provided by Theorem C satisfies
\begin{equation}
\sup_{0 \leq t \leq T} \int_{x_0 + \epsilon - \nu t}^\infty (\partial_x^m u)^2(x,t) \; dx \leq c
\end{equation}
for any $\nu\geq0, \epsilon>0$ and each $m=0,1,\dots,l$ with
\begin{equation} \label{PCONST1}
c = c(l; \nu; \epsilon; T; \|u_0\|_{H^s}; \|\partial_x^lu_0\|_{L^2(x_0,\infty)}),
\end{equation}
where $T$ is given in Theorem C. 
In particular, for all $t\in(0,T]$, the restriction of $u(\cdot,t)$ to any interval $(x_1,\infty)$ belongs to $H^l(x_1,\infty)$.

Moreover, for any $\nu\geq0, \epsilon>0$ and $R>\epsilon$
\begin{equation} \label{PROP2}
\int_0^T \int_{x_0 + \epsilon - \nu t}^{x_0 + R - \nu t} (\partial_x^{l+2}u)^2(x,t) \; dxdt \leq \tilde{c}
\end{equation}
with
\begin{equation} \label{PCONST2}
\tilde{c} = \tilde{c}(l; \nu; \epsilon; R; T; \|u_0\|_{H^s}; \|\partial_x^lu_0\|_{L^2(x_0,\infty)}).
\end{equation}
\end{theorem}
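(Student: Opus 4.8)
The plan is to iterate a localized, weighted energy estimate of Kato type (the fifth-order analogue of \eqref{KATO}, namely \eqref{LOSS}), using at each stage the two-derivative local smoothing gain coming from the leading term $-\partial_x^5u$ to produce the space-time integrability needed to run the estimate one derivative higher. All computations are valid a priori only for smooth solutions, so I would first fix the global input from Theorem~C, $u\in C([0,T];H^s(\mathbb{R}))$ with $s>5/2$ and $\partial_x^3u\in L^1([0,T];L^\infty(\mathbb{R}))$, carry out the argument for smooth $u$, and then obtain the general statement by approximating $u_0$ in $H^s(\mathbb{R})$ by Schwartz data, applying the estimates to the corresponding smooth solutions, and passing to the limit via the continuous dependence in Theorem~C. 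The point is that the constants \eqref{PCONST1} and \eqref{PCONST2} depend on the data only through $\|u_0\|_{H^s}$ and $\|\partial_x^lu_0\|_{L^2(x_0,\infty)}$, which are stable under this approximation.

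For the weights I would introduce, for each integer $0\le m\le l$, a moving cutoff $\psi_m=\psi_m(x+\nu t)$ that is smooth, nonnegative, nondecreasing, vanishing for $x+\nu t\le x_0+\epsilon_m$ and identically $1$ for $x+\nu t\ge x_0+\epsilon_m+\delta$, chosen so that the family is nested: $\psi_m'\gtrsim\psi_{m+1}$ on the support of $\psi_{m+1}$, with the small shifts $\epsilon_m$ arranged to respect the prescribed $\epsilon$. Differentiating $\int(\partial_x^mu)^2\psi_m\,dx$ in time and using \eqref{KDV5G}, integration by parts against the leading term produces the favorable smoothing term, so that $\frac{d}{dt}\int(\partial_x^mu)^2\psi_m\,dx+c\int(\partial_x^{m+2}u)^2\psi_m'\,dx$ (both weights nonnegative) equals a right-hand side consisting of the lower-order moving-frame term $\nu\int(\partial_x^mu)^2\psi_m'\,dx$, a collection of nonlinear and commutator contributions $\int\partial_x^mu\,[\partial_x^m;\,\cdot\,](\,\cdot\,)\psi_m\,dx$ that reduce after further integration by parts to quantities controlled by $\|u\|_{L_T^\infty H^s}$, by the inductive energy bound at level $m$, and by the previously obtained space-time smoothing, and, crucially, the loss-of-derivatives terms visible already in \eqref{LOSS}, the worst being $\int\partial_xu\,(\partial_x^{m+1}u)^2\psi_m\,dx$, which carries one more derivative than the energy $\int(\partial_x^mu)^2\psi_m\,dx$ can absorb.

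To treat the dangerous term I would import Kwon's device from Theorem~C into the localized setting: add to $\int(\partial_x^mu)^2\psi_m\,dx$ a correction functional, built from $u$ and $\partial_x^{m+1}u$ with a coefficient depending on $\partial_xu$ and $\psi_m$, whose time derivative exactly cancels $\int\partial_xu\,(\partial_x^{m+1}u)^2\psi_m\,dx$ modulo terms that are either of lower order or controllable by the refined Strichartz bound underlying Theorem~C. The correction is lower order relative to the energy, so the corrected and uncorrected energies are comparable and the sup-in-time bound transfers back. After this cancellation the remaining right-hand side is bounded by $g(t)\int(\partial_x^mu)^2\psi_m\,dx$ plus an integrable-in-time error, where $g\in L^1([0,T])$ is controlled by $\|\partial_x^3u\|_{L^1_TL^\infty}$ (finite by Theorem~C) and the Strichartz norms. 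Gronwall's inequality then yields $\sup_{0\le t\le T}\int(\partial_x^mu)^2\psi_m\,dx\le c$, and integrating the differential inequality in time yields the companion bound $\int_0^T\!\!\int(\partial_x^{m+2}u)^2\psi_m'\,dx\,dt\le c$.

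These two bounds drive an induction on $m$. For the base case, the global regularity $u\in C([0,T];H^s)$ gives the energy bound at every level $m\le s$ on all of $\mathbb{R}$, and the fifth-order local smoothing effect (the analogue of Theorem~B) supplies the accompanying space-time estimate; the hypothesis $\partial_x^lu_0\in L^2(x_0,\infty)$ together with $u_0\in H^s$ furnishes, by Gagliardo--Nirenberg interpolation on a half-line slightly to the right of $x_0$, the finite initial energy $\int(\partial_x^mu_0)^2\psi_m\,dx<\infty$ needed at each intermediate level. In the inductive step, the space-time smoothing at level $m$, combined with the nesting $\psi_m'\gtrsim\psi_{m+1}$, provides exactly the control of $\int_0^T\!\!\int(\partial_x^{m+2}u)^2\psi_{m+1}\,dx\,dt$ that renders the error terms in the level-$(m+1)$ estimate integrable, closing the induction up to $m=l$. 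Reading off the final smoothing bound at $m=l$ and bounding $\psi_l'$ from below on the window $(x_0+\epsilon-\nu t,\,x_0+R-\nu t)$ gives \eqref{PROP2}, while the chain of sup bounds for $m=0,1,\dots,l$ gives the first assertion. I expect the main obstacle to be precisely the loss-of-derivatives term $\int\partial_xu\,(\partial_x^{m+1}u)^2\psi_m\,dx$: unlike the KdV case \eqref{KATO}, where every nonlinear contribution collapses to $(\partial_x^ku)^2$ times an $L^\infty$ coefficient, here the fifth-order nonlinearity forces Kwon's corrected-energy cancellation and the refined Strichartz estimate to be carried out compatibly with the moving cutoffs and with the induction, so that the bookkeeping of the error terms and the verification that the correction remains genuinely lower order form the delicate heart of the argument.
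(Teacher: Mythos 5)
Your overall architecture coincides with the paper's actual proof: weighted energies with a moving cutoff $\chi(x+\nu t;\epsilon,b)$, induction on the number of derivatives, Kwon's corrected energy to absorb the loss-of-derivatives term, Gronwall's inequality driven by $\|\partial_x^3u\|_{L_T^1L_x^\infty}$ supplied by Theorem C, an interpolation lemma (Lemma \ref{PLemmaInduct}) to give finite initial energy at the intermediate levels, and a mollification/limiting argument using continuous dependence, with uniformity of the constants in the mollification parameter. However, two of your stated steps are false as written, and each would break the induction if implemented literally.

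First, the nesting $\psi_m'\gtrsim\psi_{m+1}$ on the support of $\psi_{m+1}$ is impossible: $\psi_m'$ is compactly supported while $\psi_{m+1}\equiv1$ near $+\infty$. Consequently the quantity you claim the inductive step delivers, $\int_0^T\!\!\int(\partial_x^{m+2}u)^2\psi_{m+1}\,dxdt$ with the \emph{full} weight, is neither attainable nor needed. What the argument actually requires --- and what Section \ref{S:2} constructs --- is the reverse comparison $\chi^{(j)}(\cdot;\epsilon,b)\leq c\,\chi'(\cdot;\epsilon/3,b+\epsilon)$ as in \eqref{CutoffExpanded}, together with control of the ratio $(\chi''')^2/\chi'$ as in \eqref{CutoffRatioExpanded}; this ratio enters the energy inequality \eqref{ENERGY} and is omitted from your outline, and it rules out a generic smooth nondecreasing cutoff. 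After the correction, every quadratic term in $\partial_x^{l+1}u$ or $\partial_x^{l+2}u$ carries the weight $\chi'$ rather than $\chi$, so it is the space-time smoothing from levels $l-1$ and $l-2$, with dilated cutoff parameters, that closes the loop. Second, your correction functional ``built from $u$ and $\partial_x^{m+1}u$'' sits one derivative \emph{above} the energy $\int(\partial_x^mu)^2\psi_m\,dx$, contradicting your own requirement that the corrected and uncorrected energies be comparable; moreover its time derivative under the fifth-order flow would generate terms in $\partial_x^{m+3}u$ and worsen the loss. The correct functional, used in the paper, is $\int u(\partial_x^{l-1}u)^2\chi(x+\nu t)\,dx$ --- two derivatives \emph{below} the loss term --- whose time derivative reproduces $B_{11}=(3-2l)\int\partial_xu(\partial_x^{l+1}u)^2\chi\,dx$ up to the factor $5/(2l-3)$ and whose size is controlled by the level-$(l-1)$ result. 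Note also that the cancellation is not ``exact modulo lower order'': the residual trilinear sums ($B_2$ and $C_1,\dots,C_5$ in the paper's notation) require the weighted $L^\infty$-trick \eqref{LinftyTrick} and sup bounds drawn simultaneously from levels $l-k$ and $4+k$, which is why the paper treats $l\leq6$ individually before the general inductive step $l\geq7$. With these repairs your plan becomes precisely the paper's proof.
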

\begin{remark}
Observe that \eqref{PROP2} is a generalization of Kato's local smoothing effect since we do not require $u_0 \in H^l(\mathbb{R})$.
\end{remark}
\begin{remark} \label{NUD}
The constants appearing in Theorem \ref{PROP} have the form of a polynomial in $\nu$. For $l\geq6$, the degree of this dependence is $d=8(l-5)$.
\end{remark}

For fixed $l\in\mathbb{Z}^+$, Theorem \ref{PROP} is the base case for the situation where the derivatives of the initial data possess polynomial decay when restricted to the positive half-line. Our second result states that this decay persists.

\begin{theorem} \label{DECAY}
Let $s>5/2$ and let $n,l\in\mathbb{Z}^+$. 
Suppose $u_0 \in H^{s}(\mathbb{R})$ and for each $m=0,1,\dots,l$
\begin{equation} \label{DDATA}
\|x^{n/2}\partial_x^mu_0\|_{L^2(0,\infty)}^2
	= \int_0^\infty x^n(\partial_x^mu_0)^2(x) \; dx
	< \infty.
\end{equation}
Then the solution $u$ of IVP \eqref{KDV5G} provided by Theorem C satisfies
\begin{equation} \label{DECAY1}
\sup_{0 \leq t \leq T} \int_\epsilon^\infty x^n(\partial_x^mu)^2(x,t) \; dx
	\leq c
\end{equation}
for any $\epsilon>0$ and each $m=0,1,\dots,l$ with
\begin{equation} \label{DCONST1}
c = c(n; l; \epsilon; T; \|u_0\|_{H^s}; \|x^{n/2}\partial_x^ku_0\|_{L^2(0,\infty)})
\end{equation}
for $k=0,1,\dots,m$, 
where $T$ is given in Theorem C. 
By local well-posedness, we may take $\epsilon=0$ for $m \leq s$.

Moreover, for any $\epsilon>0$
\begin{equation} \label{DECAY2}
\int_0^T \int_0^\infty x^{n-1}(\partial_x^{l+2}u)^2(x,t) \; dxdt \leq \tilde{c}
\end{equation}
with $\tilde{c}$ as in \eqref{DCONST1}.
\end{theorem}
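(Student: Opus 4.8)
The plan is to establish \eqref{DECAY1} and \eqref{DECAY2} through weighted energy estimates, organized as an induction on the weight power $n$ with Theorem \ref{PROP} serving as the base case $n=0$, together with an inner induction on the derivative order $m$. Since Theorem C provides only $u\in C([0,T];H^s)$ with $s>5/2$, I would first carry out every computation for smooth solutions---obtained by mollifying the data or by the parabolic regularization underlying Ponce's construction---so that the integrations by parts are legitimate; because the resulting bounds depend only on $\|u_0\|_{H^s}$ and on the weighted data norms in \eqref{DDATA}, they survive the passage to the limit by continuity of the solution map.

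The basic tool is a family of smooth, nondecreasing weights $\phi=\phi_{n,\epsilon}(x)$ that are constant for $x\le 0$, agree with $x^n$ for $x\ge\epsilon$, and satisfy $|\phi^{(j)}|\lesssim x^{n-j}$ on $x>\epsilon$; the cutoff at $x=\epsilon$ is what forces $\epsilon>0$ in the statement, with $\epsilon=0$ permissible once $m\le s$ since then $\partial_x^m u\in L^2(\mathbb{R})$ globally. Applying $\partial_x^m$ to \eqref{KDV5G}, testing against $(\partial_x^m u)\phi$ and integrating by parts yields, in analogy with \eqref{KATO} and \eqref{LOSS}, an identity of the schematic form
\begin{equation*}
\frac{d}{dt}\int (\partial_x^m u)^2\phi\,dx + 5\int(\partial_x^{m+2}u)^2\phi'\,dx
 = 5\int(\partial_x^{m+1}u)^2\phi''' - \int(\partial_x^m u)^2\phi^{(5)} + \mathcal N,
\end{equation*}
where $\mathcal N$ collects the commutator and nonlinear contributions, each of which (after integration by parts) carries at most $m+1$ derivatives squared. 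Since $\phi'\approx n x^{n-1}$ on $x>\epsilon$, the fifth-order smoothing term on the left enters with a positive coefficient; integrating in time produces the space-time bound \eqref{DECAY2} at level $m=l$, while the endpoint values give \eqref{DECAY1}.

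The inductions close by bookkeeping on the powers of $x$ carried by each error term. The terms weighted by $\phi'''\sim x^{n-3}$ and $\phi^{(5)}\sim x^{n-5}$ are absorbed using the outer inductive hypothesis at lower weights, down to the base case furnished by Theorem \ref{PROP}. The terms weighted by $\phi'\sim x^{n-1}$ on $\partial_x^{m+1}u$---such as the contribution $\int u\phi'(\partial_x^{m+1}u)^2$ arising from $u\partial_x^3 u$---are space-time integrable by virtue of the smoothing gain furnished at the previous derivative level $m-1$, which is exactly $\int(\partial_x^{m+1}u)^2\phi'$, together with the outer inductive hypothesis for the lowest order $m=0$; this is why the inner induction must be run in the order $m=0,1,\dots,l$, accumulating the good terms. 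The contributions carrying at most $m$ derivatives at full weight are controlled by the inner inductive hypothesis and the uniform bound $\|u\|_{L_T^\infty H^s}$ via Gronwall.

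The one contribution not reached by any of these devices is the loss-of-derivatives term displayed in \eqref{LOSS}, namely $\int \partial_x u\,(\partial_x^{m+1}u)^2\phi$, which carries the top-order factor $\partial_x^{m+1}u$ at the \emph{full} weight $\phi\sim x^n$---one power higher than the smoothing gain can supply---and cannot be bounded by $\|u\|_{L_T^\infty H^m}$. This is the principal obstacle, and here I would import Kwon's machinery from Theorem C in weighted form: adjoin to $\int(\partial_x^m u)^2\phi$ a weighted analogue of his corrected energy whose time derivative cancels this term, and dispatch the companion term $\|\partial_x^3 u\|_\infty\int(\partial_x^m u)^2\phi$ through the refined Strichartz bound $\partial_x^3 u\in L^1_TL^\infty$ together with Gronwall. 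Verifying that the correction preserves the positivity of the energy, and that its interaction with $\phi$---through the commutators $[\phi,\partial_x^k]$---does not reintroduce uncontrolled top-order terms, is the delicate point I expect to demand the most care.
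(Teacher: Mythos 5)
Your proposal is correct and follows essentially the same route as the paper's proof in Section \ref{S:4}: the weights $\chi_n(x)=x^n\chi(x;\epsilon,b)$ inserted into the energy inequality \eqref{ENERGY}, a double induction in the weight power $n$ and the derivative order with Theorem \ref{PROP} as the base case $n=0$ (the paper merely nests the two inductions in the opposite order, which is immaterial since both decrements are available), a weighted version of Kwon's correction term --- there $\int u(\partial_x^{l-1}u)^2\chi_n\,dx$, or $\int u^3\chi_n\,dx$ for $l=1$ --- to cancel the loss-of-derivatives term $\int\partial_xu(\partial_x^{l+1}u)^2\chi_n\,dx$, Gronwall driven by $\|\partial_x^3u\|_{L_T^1L_x^\infty}$ from Theorem C, and the same mollification/limiting argument. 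The one ``delicate point'' you flag is lighter than you fear: the paper never needs positivity or coercivity of the corrected energy, only the fundamental theorem of calculus, since the endpoint values of the correction are bounded directly by the hypothesis \eqref{DDATA} and the inductive case $l-1$, while the commutator terms it generates are dispatched via \eqref{LinftyTrick} and the cutoff properties \eqref{CutoffNRatioToMinusOne}--\eqref{CutoffNPrimeToMinusOne}, which reduce every weight derivative $\chi_n^{(j)}$ to the lower weight $\chi_{n-1}$ exactly as in your bookkeeping.
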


The hypothesis of Theorem \ref{DECAY} may seem unneccessarily strong, but a bootstrapping argument yields regularity of the solution for positive times by imposing decay on only the initial data and not its derivatives. Thus the next theorem can be seen as a weakening of the hypothesis of Theorem A inasmuch as exponential decay implies polynomial decay.

\begin{theorem} \label{DIR}
Let $s>5/2$. Suppose $u_0 \in H^s(\mathbb{R})$ and for some $n \in \mathbb{Z}^+$
\begin{equation}
\|x^{n/2}u_0\|_{L^2(0,\infty)}^2
	= \int_0^\infty x^nu_0^2(x) \; dx
	< \infty.
\end{equation}
Then for every $\delta>0$ and any pair $m,k \in \mathbb{Z}^+\cup\{0\}$ satisfying
\begin{equation}
n=k+\lfloor m/2 \rfloor
\end{equation}
the solution $u$ of IVP \eqref{KDV5G} provided by Theorem C satisfies, for $k>0$
\begin{equation}
\sup_{\delta \leq t \leq T}
	\int_{\epsilon-\nu t}^\infty (\partial_x^mu)^2(x,t) \langle x_+\rangle^k \; dx
	+ \int_\delta^T \int_{\epsilon-\nu t}^\infty (\partial_x^{m+2}u)^2(x,t)\langle x_+\rangle^{k-1} \; dxdt
		\leq c
\end{equation}
for every $\nu\geq0, \epsilon>0$, with
\begin{equation}
c = c(n; \delta; \nu; \epsilon; T; \|u_0\|_{H^s}; \|x^{n/2}u_0\|_{L^2(0,\infty)}),
\end{equation}
where $T$ is given in Theorem C. 
For $k=0$ and any $R>\epsilon$,
\begin{equation}
\sup_{\delta \leq t \leq T}
	\int_{\epsilon-\nu t}^\infty (\partial_x^{2n}u)^2(x,t) \; dx
	+ \int_\delta^T \int_{\epsilon-\nu t}^{R-\nu t} (\partial_x^{2n+2}u)^2(x,t) \; dxdt
		\leq \tilde{c}
\end{equation}
with $\tilde{c}$ additionally depending on $R$.
\end{theorem}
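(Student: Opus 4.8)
The plan is to prove Theorem \ref{DIR} by a finite bootstrapping induction that converts the one‑sided decay of $u_0$ into regularity of $u$ two derivatives at a time, using at each step the same moving‑frame weighted energy estimates that underlie Theorems \ref{PROP} and \ref{DECAY}. Fix $n$ and arrange the assertions along the staircase of pairs $(m,k)$ with $k=n-\lfloor m/2\rfloor$,
\begin{equation*}
(0,n)\ \longrightarrow\ (2,n-1)\ \longrightarrow\ (4,n-2)\ \longrightarrow\ \cdots\ \longrightarrow\ (2n,0),
\end{equation*}
each arrow gaining two derivatives while spending one power of the weight. The base case $(m,k)=(0,n)$ is exactly the decay‑persistence estimate of Theorem \ref{DECAY} with $l=0$, whose hypothesis collapses to $\|x^{n/2}u_0\|_{L^2(0,\infty)}<\infty$; it supplies $\sup_t\int_\epsilon^\infty x^nu^2\,dx\lesssim 1$ and the local‑smoothing bound $\int_0^T\!\int_0^\infty x^{n-1}(\partial_x^2u)^2\,dx\,dt\lesssim 1$, which I upgrade to the moving region $\{x>\epsilon-\nu t\}$ and the smooth one‑sided weight $\langle x_+\rangle^n$ exactly as in Theorem \ref{PROP}, where $\partial_t$ falling on a nondecreasing moving cutoff $\theta(x+\nu t-\epsilon)$ produces the favorably signed term $\nu\int(\partial_x^mu)^2\theta'\,dx$ because $\nu\geq0$.

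For the inductive step I pass from level $(m,k)$, $k\geq1$, to level $(m+2,k-1)$. The engine is the weighted fifth‑order energy identity: differentiating \eqref{KDV5G} $m+2$ times, multiplying by $2\,\partial_x^{m+2}u\cdot\chi$ with $\chi(x,t)=\langle x_+\rangle^{k-1}\theta(x+\nu t-\epsilon)$, and integrating by parts as in \eqref{LOSS} gives
\begin{equation*}
\frac{d}{dt}\int (\partial_x^{m+2}u)^2\chi\,dx + 2\int (\partial_x^{m+4}u)^2\chi'\,dx
= \mathcal{E}_{\mathrm{lin}} + \mathcal{E}_{\mathrm{nl}},
\end{equation*}
where $\chi'\sim (k-1)\langle x_+\rangle^{k-2}\theta+\nu\langle x_+\rangle^{k-1}\theta'$ furnishes precisely the good smoothing term with weight $\langle x_+\rangle^{k-2}$ that advances the induction. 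To obtain the sup‑in‑time control over $[\delta,T]$ I first invoke the level‑$(m,k)$ smoothing bound $\int\!\int(\partial_x^{m+2}u)^2\langle x_+\rangle^{k-1}\,dx\,dt\lesssim 1$ and the mean value theorem to pick a seed time $t_0$ at which $\int(\partial_x^{m+2}u)^2(x,t_0)\langle x_+\rangle^{k-1}\,dx<\infty$, then integrate the identity forward from $t_0$; since the equation is autonomous, $t_0$ serves as a new initial time. Choosing the seed times in a nested decreasing sequence of subintervals of $(0,\delta]$—possible because only the $n$ steps of the staircase occur—makes $\delta>0$ arbitrary, and simultaneously explains why the supremum is taken over $[\delta,T]$ rather than $[0,T]$: no regularity of $\partial_x^{m+2}u_0$ is assumed, so the smoothing is available only for positive times.

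The main obstacle is the control of the nonlinear error $\mathcal{E}_{\mathrm{nl}}$ coming from $\partial_x^{m+2}\bigl(c_1u^2\partial_xu+c_2\partial_xu\partial_x^2u+c_3u\partial_x^3u\bigr)$. After integration by parts, the genuinely dangerous contributions of $c_2\partial_xu\partial_x^2u$ and $c_3u\partial_x^3u$ all reduce to the loss‑of‑derivatives term $\int \partial_xu\,(\partial_x^{m+3}u)^2\chi\,dx$, carrying one derivative above the energy level—precisely the second term on the right of \eqref{LOSS} with $k=m+2$. As in Theorem C, this cannot be absorbed by $\|u\|_{L^\infty_TH^{m+2}}$ alone; I would dispatch it with Kwon's corrected energy, which cancels the top‑order loss, together with his refined Strichartz estimate to handle the residual, placing $\partial_x^3u\in L^1_TL^\infty$ and $u,\partial_xu,\partial_x^2u\in L^\infty_TL^\infty$ (from $s>5/2$ and Theorem C) on the low‑order factors. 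The remaining pieces of $\mathcal{E}_{\mathrm{nl}}$ and of $\mathcal{E}_{\mathrm{lin}}$ are honestly lower order: integrals of $(\partial_x^ju)^2$ with $j\leq m+2$ against weights $\langle x_+\rangle^{\leq k-1}$, which are either absorbed by Gronwall at the current level or already controlled at the preceding staircase levels (whose weight powers exceed $k-1$) and by the moving‑cutoff good terms.

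Iterating down to $k=0$ yields the regularity estimate for $\partial_x^{2n}u$. In that endpoint the weight degenerates to $\langle x_+\rangle^0=1$, so $\chi'$ is no longer nonnegative on a half‑line and the smoothing term survives only on the bounded window where $\theta'$ is supported; this forces the integration domain $\{\epsilon-\nu t<x<R-\nu t\}$ and recovers exactly the local‑smoothing form of Theorem \ref{PROP}, now with $l=2n$. The odd cases $m=2j+1$ follow by running the same identity at one higher derivative or by interpolating between the consecutive even levels $m=2j$ and $m=2j+2$. The polynomial‑in‑$\nu$ dependence of the constants is inherited from the moving‑cutoff manipulations, as recorded in Remark \ref{NUD}.
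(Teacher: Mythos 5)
Your proposal is correct and follows essentially the paper's own route: the paper proves Theorem \ref{DIR} by the same staircase bootstrap $(0,n)\to(2,n-1)\to\cdots\to(2n,0)$, using Lemma \ref{DIR_LEMMA} (a mean-value-in-time selection of seed times from the time-integrated smoothing term, with integration by parts supplying the intermediate odd derivative) to restart the autonomous flow at successively later times $t_0<t_1<\cdots<\hat{t}<\delta$, applying Theorem \ref{DECAY} at each step and finishing with Theorem \ref{PROP}. The only cosmetic difference is that you re-derive the weighted energy identity with Kwon's correction inline at each step, where the paper simply invokes Theorem \ref{DECAY} as a black box.
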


The time reversible nature of equation \eqref{KDV5G} yields a number of consequences. Combining with the contrapositive of Theorems \ref{PROP} and \ref{DIR}, we have 
the following.

\begin{corollary} 
Assume that $s>5/2$. 
Let $u \in C([-T,T];H^{s}(\mathbb{R}))$ be a solution of \eqref{KDV5G} provided by Theorem C such that
\begin{equation*}
\partial_x^mu(\cdot,\hat{t}) \notin L^2(a,\infty)
	\quad\text{for some $\hat{t}\in[-T,T]$ and 
	$a\in\mathbb{R}$}.
\end{equation*}
Then for any $t\in[-T,\hat{t})$ and any $\beta\in\mathbb{R}$
\begin{equation*}
\partial_x^mu(\cdot,t) \notin L^2(\beta,\infty)
	\quad\text{and}\quad
	x^{\ceil{m/2}/2}u(\cdot,t) \notin L^2(0,\infty).
\end{equation*}
\end{corollary}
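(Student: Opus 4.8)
The plan is to prove both assertions by contradiction, reading them off as the contrapositives of the forward-in-time statements in Theorems~\ref{PROP} and \ref{DIR}. First I would record the symmetry that if $u(x,t)$ solves \eqref{KDV5G} then so does $v(x,t)=u(-x,-t)$: under $(x,t)\mapsto(-x,-t)$ each of the terms $\partial_t u$, $\partial_x^5 u$, $u^2\partial_x u$, $\partial_x u\,\partial_x^2 u$, $u\,\partial_x^3 u$ has an odd total order of differentiation (counting $x$- and $t$-derivatives) and so acquires the single factor $-1$, whence the equation is merely multiplied by $-1$. This is what lets Theorem C, which produces a forward solution on $[0,T]$, also furnish a backward solution, so that the hypothesis that $u$ exists on the symmetric interval $[-T,T]$ is legitimate. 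The engine of the proof is a restart: for any $t_1\in[-T,\hat t)$ the translate $w(\cdot,s)=u(\cdot,t_1+s)$ is, by the uniqueness in Theorem C, the forward solution with data $w_0=u(\cdot,t_1)\in H^s(\R)$, so Theorems~\ref{PROP} and \ref{DIR} may be applied to $w$ over $s\in[0,\hat t-t_1]$.

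For the first assertion I would take $l=m$ in Theorem~\ref{PROP} and argue contrapositively. Suppose $\partial_x^m u(\cdot,t_1)\in L^2(\beta,\infty)$ for some $t_1<\hat t$ and some $\beta\in\R$; applying Theorem~\ref{PROP} to $w$ with $x_0=\beta$ bounds $\int_{\beta+\epsilon-\nu s}^\infty(\partial_x^m u)^2(x,t_1+s)\,dx$ uniformly in $s\in[0,\hat t-t_1]$ for every $\nu\ge0,\epsilon>0$. Evaluating at $s=\hat t-t_1>0$ and choosing $\nu$ large enough that $\beta+\epsilon-\nu(\hat t-t_1)\le a$ gives $\partial_x^m u(\cdot,\hat t)\in L^2(a,\infty)$; the bounding constant is finite for each fixed $\nu$ (cf. Remark~\ref{NUD}), so this contradicts the hypothesis. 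Hence $\partial_x^m u(\cdot,t)\notin L^2(\beta,\infty)$ for all $t<\hat t$ and all $\beta$.

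The second assertion turns on the index bookkeeping in Theorem~\ref{DIR}, which from $x^{n/2}u(\cdot,t_1)\in L^2(0,\infty)$ extracts $\partial_x^m u$-regularity whenever $n=k+\floor{m/2}$. The right choice is $n=\ceil{m/2}$: for even $m$ one has $\ceil{m/2}=\floor{m/2}$, so $k=0$ and the $k=0$ alternative controls $\partial_x^{2n}u=\partial_x^m u$; for odd $m$ one has $\ceil{m/2}=\floor{m/2}+1$, so $k=1$ and the $k>0$ alternative controls $\partial_x^m u$ directly. (The weaker weight $x^{\floor{m/2}/2}$ would, in the odd case, yield only $\partial_x^{m-1}u$, which explains the ceiling.) Assuming $x^{\ceil{m/2}/2}u(\cdot,t_1)\in L^2(0,\infty)$ for some $t_1<\hat t$, I would restart, invoke Theorem~\ref{DIR}, discard the weight $\langle x_+\rangle^k\ge1$, and push $\nu$ large as above to reach $\partial_x^m u(\cdot,\hat t)\in L^2(a,\infty)$, contradicting the hypothesis.

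The only technical wrinkle is that the existence time in Theorem C for the datum $u(\cdot,t_1)$ could be smaller than $\hat t-t_1$; since $\sup_{[-T,T]}\|u(\cdot,t)\|_{H^s}<\infty$ there is a uniform lower bound on it, so I would cover $[t_1,\hat t]$ by finitely many short subintervals and chain the estimates, the half-line structure surviving because the leftward drift of the front is absorbed into the free parameter $\nu$. I expect the main obstacle to be exactly the parity accounting in Theorem~\ref{DIR} — pinning down $n=\ceil{m/2}$ and the split into $k=0,1$ so that precisely $\partial_x^m$ and nothing weaker is produced — together with checking that the restarted $w$ is indeed the solution ``provided by Theorem C,'' which is what legitimizes applying Theorems~\ref{PROP} and \ref{DIR}.
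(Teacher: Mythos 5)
Your proof is correct and is exactly the argument the paper intends: the corollary is presented there as an immediate consequence of the contrapositives of Theorems \ref{PROP} and \ref{DIR}, restarting the flow at $t_1<\hat{t}$ and taking $\nu$ large enough to push the front $x_0+\epsilon-\nu(\hat{t}-t_1)$ past $a$, just as you do. Your bookkeeping --- the choice $n=\ceil{m/2}$ with the parity split into the $k=0$ and $k=1$ alternatives of Theorem \ref{DIR}, the use of $\langle x_+\rangle^k\geq1$, and the uniqueness/chaining remark legitimizing the restart --- correctly fills in the details the paper leaves implicit.
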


Suppose now that the initial data has regularity to the right but also contains a singularity, for instance $u_0 \in H^{s}(\mathbb{R})$, $u_0 \notin H^l(\mathbb{R})$ and
\begin{equation*}
\partial_x^lu_0 \in L^2(b,\infty)
	\quad\text{for some $l\in\mathbb{Z}^+,l>2$.}
\end{equation*}
The persistence property \eqref{LWP} prohibits the solution from lying in $H^l(\mathbb{R})$. However, as a consequence of Remark \ref{NUD}, we deduce that for positive times $\partial_x^lu(\cdot,t)$ has only polynomial growth to the left and thus lies in $L^2_{\text{loc}}(\mathbb{R})$. That is, any singularities in $\partial_x^lu(\cdot,t)$ vanish for positive times. This is made precise by the next corollary to Theorem \ref{PROP}.

\begin{corollary} \label{NU} 
Assume that $s>5/2$. 
Let $u \in C([-T,T];H^{s}(\mathbb{R}))$ be a solution of \eqref{KDV5G} provided by Theorem C. Suppose there exists $l,m\in\mathbb{Z}^+$ with $m \leq l$ such that for some $a,b\in\mathbb{R}$ with $a<b$
\begin{equation} \label{NUH}
\int_b^\infty (\partial_x^lu_0)^2(x) \; dx < \infty
	\quad\text{but}\quad
	\partial_x^mu_0 \notin L^2(a,\infty).
\end{equation}
\begin{enumerate}[(i)]
\item For any $t\in(0,T]$ and any $\varepsilon>0$
\begin{equation} \label{NUW}
\int_{-\infty}^\infty \frac{1}{\langle x_{-} \rangle^{8(l-5)+\varepsilon}}
	(\partial_x^lu)^2(x,t) \; dx \leq c,
	\quad l\geq6
\end{equation}
with $c$ depending on $t$ and $\varepsilon$.

\item For any $t\in[-T,0)$ and any $\alpha\in\mathbb{R}$
\begin{equation*}
\int_\alpha^\infty (\partial_x^mu)^2(x,t) \; dx = \infty.
\end{equation*}
\end{enumerate}
\end{corollary}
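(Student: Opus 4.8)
The two parts are both consequences of Theorem \ref{PROP}, with part (i) relying crucially on the polynomial $\nu$-dependence recorded in Remark \ref{NUD} and part (ii) using that \eqref{KDV5G} is autonomous, so that Theorem \ref{PROP} may be run forward in time starting from any instant in $[-T,T]$.

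\emph{Part (i).} Since $\partial_x^lu_0 \in L^2(b,\infty)$ with $l\geq6$, Theorem \ref{PROP} applies with $x_0=b$, and by Remark \ref{NUD} the resulting constant is a polynomial in $\nu$ of degree $d=8(l-5)$. Fixing $t\in(0,T]$ and $\epsilon=1$, this yields, for every $\nu\geq0$,
\begin{equation*}
F(b+1-\nu t) \leq P(\nu), \qquad
F(X) := \int_X^\infty (\partial_x^lu)^2(y,t)\,dy, \qquad \deg P = d.
\end{equation*}
For $X\leq b+1$ I would set $\nu=(b+1-X)/t$ to get the pointwise tail bound $F(X)\lesssim_t \langle X\rangle^{d}$, while for $X\geq b+1$ the tail $F(X)\leq F(b+1)$ is bounded; together these give $F(X)\lesssim_t \langle X_-\rangle^{d}$ for all $X\in\R$.

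Granting this, the weighted integral is handled by integration by parts. Writing $w(x)=\langle x_-\rangle^{-(d+\varepsilon)}$ and using $-F'=(\partial_x^lu)^2$,
\begin{equation*}
\int_\R \frac{(\partial_x^lu)^2(x,t)}{\langle x_-\rangle^{d+\varepsilon}}\,dx
  = \bigl[-wF\bigr]_{-\infty}^{\infty} + \int_\R w'(x)F(x)\,dx.
\end{equation*}
The boundary terms vanish, since $F(X)\to0$ and $w\to1$ as $X\to+\infty$, while $w(X)F(X)\lesssim_t |X|^{-\varepsilon}\to0$ as $X\to-\infty$. As $w$ is constant on $x\geq0$ and $|w'(x)|\sim|x|^{-(d+\varepsilon+1)}$ as $x\to-\infty$, the remaining integrand obeys $|w'(x)F(x)|\lesssim_t |x|^{-(1+\varepsilon)}$, which is integrable; this produces the bound in (i) with $c=c(t,\varepsilon)$.

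\emph{Part (ii).} I would argue by contradiction. Suppose $\int_\alpha^\infty(\partial_x^mu)^2(x,t_*)\,dx<\infty$ for some $t_*\in[-T,0)$ and $\alpha\in\R$. Because \eqref{KDV5G} is autonomous, $u(\cdot,t_*+\cdot)$ is the solution furnished by Theorem C with data $u(\cdot,t_*)\in H^s(\R)$, and by uniqueness it coincides with $u$. Applying Theorem \ref{PROP} to this translated solution, with base order $m$ and $x_0=\alpha$, propagates the one-sided control of $\partial_x^m$ forward:
\begin{equation*}
\int_{\alpha+\epsilon-\nu(t-t_*)}^\infty (\partial_x^mu)^2(x,t)\,dx \leq c, \qquad t_*\leq t\leq 0,
\end{equation*}
for all $\nu\geq0$ and $\epsilon>0$. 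If the local existence time of Theorem C does not reach $t=0$ in one step, I would iterate over finitely many subintervals covering $[t_*,0]$, feeding the output at each intermediate time into the next application; this is legitimate since $u\in C([-T,T];H^s)$ is bounded on $[t_*,0]$. Evaluating at $t=0$ gives $\int_{\alpha+\epsilon+\nu t_*}^\infty(\partial_x^mu_0)^2\,dx\leq c$, and since $t_*<0$ the left endpoint tends to $-\infty$ as $\nu\to\infty$; choosing $\nu$ with $\alpha+\epsilon+\nu t_*<a$ forces $\partial_x^mu_0\in L^2(a,\infty)$, contradicting \eqref{NUH}.

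\emph{Main obstacle.} The essential input is the exact degree $d=8(l-5)$ supplied by Remark \ref{NUD}, which demands that the $\nu$-dependence of the constants be tracked quantitatively through the proof of Theorem \ref{PROP}; once that is in hand, converting the one-parameter family of half-line bounds into the single weighted estimate is the routine computation sketched above, the only care being to balance the polynomial growth of $F$ against the weight at $-\infty$. Part (ii) is conceptually the contrapositive of forward propagation, and its sole delicate point is the bookkeeping required to invoke Theorem \ref{PROP} from a negative starting time via autonomy and, when necessary, iteration.
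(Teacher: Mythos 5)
Your argument is correct and, in substance, it is the paper's argument: part (i) rests on exactly the two inputs the paper uses, namely Theorem \ref{PROP} with the moving cutoff $\chi(x+\nu t)$ and the polynomial $\nu$-dependence $d=8(l-5)$ of Remark \ref{NUD}, converted into a growth bound on the left tail by choosing $\nu$ proportional to the distance travelled in time $t$ (the paper takes $x_0=0$ by translation invariance and bounds $I=\int_{\epsilon-\nu t}^{\epsilon}(\partial_x^lu)^2\,dx \leq ct^{-d}(\nu t)^d$; your choice $\nu=(b+1-X)/t$ is the same step). The one place you genuinely diverge is the final conversion of the tail bound $F(X)\lesssim_t\langle X_-\rangle^{d}$ into the weighted estimate \eqref{NUW}: the paper isolates this as Lemma \ref{NU_LEMMA}, proved by a dyadic decomposition of the half-line, whereas you integrate by parts against the weight $w(x)=\langle x_-\rangle^{-(d+\varepsilon)}$ and use $-F'=(\partial_x^lu)^2$. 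The two are equivalent in content (both trade an $\varepsilon$ of extra decay in the weight for summability/integrability); the dyadic version has the minor advantage of requiring only the cumulative bounds $\int_0^a f\lesssim a^\alpha$ with no absolute-continuity discussion, while your IBP version is self-contained and makes the boundary analysis explicit --- and you do correctly note that $F$ is finite and absolutely continuous for fixed $t>0$ by Theorem \ref{PROP}, so the integration by parts is legitimate. For part (ii), the paper's Section \ref{S:6} proves only (i) and leaves (ii) to the time-reversibility/contrapositive discussion in the introduction; your contradiction argument --- autonomy, forward propagation from $t_*<0$ with $\nu$ large enough that $\alpha+\epsilon+\nu t_*<a$, and iteration over finitely many local-existence intervals using the uniform $H^s$ bound on $[-T,T]$ --- is precisely the intended argument, and writing out the iteration step (re-verifying the one-sided $L^2$ hypothesis at each intermediate time from the sup bound of Theorem \ref{PROP}) is a detail the paper suppresses that your version makes explicit.
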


\begin{remark}
The conclusion \eqref{NUW} holds for $l=3,4,5$ with the appropriate modification to the weight.
\end{remark}

As a consequence of Corollary \ref{NU} we see that, in general, regularity to the left does not propagate forward in time. Suppose in addition to \eqref{NUH} that
\begin{equation*}
\int_{-\infty}^a (\partial_x^lu_0)^2(x) \; dx < \infty.
\end{equation*}
If this regularity persisted we could conclude from \eqref{NUW} that $u(\cdot,t) \in H^l(\mathbb{R})$ for positive times, contradicting the persistence property \eqref{LWP}.

Beginning with Theorem \ref{DIR} yields a similar corollary.

\begin{corollary} 
Assume that $s>5/2$.
Let $u \in C([-T,T];H^{s}(\mathbb{R}))$ be a solution of \eqref{KDV5G} provided by Theorem C. If for $m,n\in\mathbb{Z}^+$, $m<n$,
\begin{equation*}
x_+^{\ceil{n/2}/2}u_0 \in L^2(0,\infty)
	\quad\text{and}\quad
	\partial_x^mu_0 \notin L^2(\beta,\infty)
	\quad\text{for some $\beta\in\mathbb{R}$},
\end{equation*}
then for any $t\in(0,T]$
\begin{equation*}
x_+^{\ceil{n/2}/2}u(\cdot,t) \in L^2(0,\infty)
	\quad\text{and}\quad
	\partial_x^nu(\cdot,t) \in L^2(\alpha,\infty)
	\quad\text{for\ any}\ \alpha\in\mathbb{R},
\end{equation*}
and for any $t\in[-T,0)$
\begin{equation*}
x_+^{\ceil{m/2}/2}u(\cdot,t) \notin L^2(0,\infty)
	\quad\text{and}\quad
	\partial_x^mu(\cdot,t) \notin L^2(\alpha,\infty)
	\quad\text{for\ any}\ \alpha\in\mathbb{R}.
\end{equation*}
\end{corollary}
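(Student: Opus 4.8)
The plan is to deduce the corollary directly from Theorems \ref{PROP} and \ref{DIR}: the forward-in-time assertions are read off Theorem \ref{DIR}, while the backward-in-time assertions follow from the contrapositives of Theorems \ref{PROP} and \ref{DIR} applied to the time-translated flow, the latter legitimized by the time reversibility of \eqref{KDV5G}. The bookkeeping point running through the argument is that a one-sided decay hypothesis of the form $x_+^{\ceil{p/2}/2}u(\cdot,\tau_0)\in L^2(0,\infty)$ matches the decay parameter $N=\ceil{p/2}$ in Theorem \ref{DIR}; since that theorem relates the derivative order and the weight exponent by $N=k+\floor{p/2}$, taking $k=\ceil{p/2}-\floor{p/2}\in\{0,1\}$ produces exactly $p$ derivatives of $u$ in $L^2$ on right half-lines, with the harmless weight $\langle x_+\rangle$ present only when $p$ is odd. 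This is the mechanism converting one-sided decay of order $\ceil{p/2}$ into $p$ derivatives of regularity.

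For the forward statements, apply Theorem \ref{DIR} to $u$ at the initial time $0$ with decay parameter $\ceil{n/2}$. Taking derivative order $0$ and weight exponent $\ceil{n/2}$ yields $\sup_{\delta\le t\le T}\int_\epsilon^\infty u^2\langle x_+\rangle^{\ceil{n/2}}\,dx\le c$; since $u(\cdot,t)\in H^s\subset L^2$ and the weight is bounded near the origin, this gives $x_+^{\ceil{n/2}/2}u(\cdot,t)\in L^2(0,\infty)$ for every $t\in(0,T]$, which is persistence of decay. Taking instead derivative order $n$ and weight exponent $\ceil{n/2}-\floor{n/2}\in\{0,1\}$ produces $\int_{\epsilon-\nu t}^\infty(\partial_x^n u)^2\,dx\le c$ for every $\nu\ge0$; fixing $t\in(0,T]$ and letting $\nu$ be large enough that $\epsilon-\nu t<\alpha$ gives $\partial_x^n u(\cdot,t)\in L^2(\alpha,\infty)$ for every $\alpha\in\mathbb{R}$. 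The singularity hypothesis $\partial_x^m u_0\notin L^2(\beta,\infty)$ is irrelevant here, since Theorem \ref{DIR} constrains only $u_0$ itself.

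For the backward statements I would argue by contradiction, transporting decay and regularity forward from a putative good negative time to time $0$. Fix $t_-\in[-T,0)$. Because \eqref{KDV5G} is autonomous and time reversible, $u(\cdot,t_-)\in H^s$ may be taken as data and, by uniqueness, the resulting Theorem C solution coincides with the given $u$ on $[t_-,0]$; hence Theorems \ref{PROP} and \ref{DIR} apply with initial time $t_-$. If $x_+^{\ceil{m/2}/2}u(\cdot,t_-)\in L^2(0,\infty)$, then Theorem \ref{DIR} with decay parameter $\ceil{m/2}$ and derivative order $m$ gives $\partial_x^m u(\cdot,0)=\partial_x^m u_0\in L^2(\alpha,\infty)$ for all $\alpha$, in particular $\partial_x^m u_0\in L^2(\beta,\infty)$, contradicting the hypothesis; thus $x_+^{\ceil{m/2}/2}u(\cdot,t)\notin L^2(0,\infty)$ for every $t\in[-T,0)$. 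Likewise, if $\partial_x^m u(\cdot,t_-)\in L^2(\alpha_0,\infty)$ for some $\alpha_0$, then Theorem \ref{PROP} with $l=m$ propagates this to time $0$ on the right half-line $(\alpha_0+\epsilon-\nu|t_-|,\infty)$; choosing $\nu$ large forces $\partial_x^m u_0\in L^2(\beta,\infty)$, again a contradiction, so $\partial_x^m u(\cdot,t)\notin L^2(\alpha,\infty)$ for every $\alpha$ and every $t\in[-T,0)$.

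I expect the main obstacle to be administrative rather than analytic. First, the parity bookkeeping that matches the weight exponent $\ceil{p/2}$ to the derivative order $p$ through $N=k+\floor{p/2}$ must be carried out uniformly for even and odd $p$, retaining the $\langle x_+\rangle$ weight in the odd case and checking it does not weaken the conclusion. Second, one must justify invoking the forward-time Theorems \ref{PROP} and \ref{DIR} on the shifted interval $[t_-,0]$ for the given two-sided solution; this rests on the time-translation invariance of \eqref{KDV5G} together with the fact that the a priori estimates underlying those theorems hold on any interval on which an $H^s$ solution exists, independently of the (possibly short) existence time furnished by Theorem C. Once these points are settled, the traveling-region device ($\nu\to\infty$) and the remark that $u(\cdot,t)\in L^2$ near the origin upgrade the half-line estimates to the stated arbitrary-$\alpha$ and full-$(0,\infty)$ forms.
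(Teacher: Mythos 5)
Your proposal is correct and follows exactly the route the paper intends for this corollary, which it states without detailed proof as a consequence of Theorems \ref{PROP} and \ref{DIR} combined with time reversibility: the forward assertions come from Theorem \ref{DIR} with decay parameter $\ceil{n/2}$ and the pairing $n=k+\floor{m/2}$ (your parity bookkeeping $k=\ceil{n/2}-\floor{n/2}\in\{0,1\}$ is the right mechanism), and the backward assertions come from the contrapositive applied on $[t_-,0]$ with the traveling cutoff ($\nu$ large) pushing the half-line past $\beta$. Your two flagged administrative points --- retaining the harmless $\langle x_+\rangle$ weight in the odd case and justifying the estimates on the shifted interval via time-translation invariance --- are handled appropriately, so nothing further is needed.
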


Our proof technique does not rely on the particular values of the coefficients in \eqref{KDV5G}, hence Theorems \ref{PROP}, \ref{DECAY} and \ref{DIR} can be applied backwards in time. For instance, if $u(x,t)$ is a solution of \eqref{KDV5G} with regularity to the right which propagates leftward, then $u(-x,-t)$ has regularity to the left which propagates rightward. Therefore we can consider the situation when $u(\cdot,t_0)$ has decay or regularity to the right and $u(\cdot,t_1)$ has decay or regularity to the left, where $t_0<t_1$.
\begin{corollary} 
Assume that $s>5/2$. 
Let $u \in C([-T,T];H^{s}(\mathbb{R}))$ be a solution of \eqref{KDV5G} provided by Theorem C. If there exist $n_j\in\mathbb{Z}^+\cup\{0\}$, $j=1,2,3,4$, $t_0,t_1\in[-T,T]$ with $t_0<t_1$ and $a,b\in\mathbb{R}$ such that
\begin{equation*}
\int_0^\infty |x|^{n_1}|u(x,t_0)|^2 \; dx < \infty
	\quad\text{and}\quad
	\int_a^\infty |\partial_x^{n_2}u(x,t_0)|^2 \; dx < \infty
\end{equation*}
and
\begin{equation*}
\int_{-\infty}^0 |x|^{n_3}|u(x,t_1)|^2 \; dx < \infty
	\quad\text{and}\quad
	\int_{-\infty}^b |\partial_x^{n_4}u(x,t_1)|^2 \; dx < \infty
\end{equation*}
then
\begin{equation*}
u \in C([-T,T];H^s(\mathbb{R}) \cap L^2(|x|^r \; dx))
\end{equation*}
where
\begin{equation*}
s = \min\left\{\max\{2n_1,n_2\},\max\{2n_3,n_4\}\right\}
	\quad\text{and}\quad
	r=\min\{n_1,n_3\}.
\end{equation*}
\end{corollary}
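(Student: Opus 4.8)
The plan is to use the right-hand hypotheses at $t_0$ to propagate regularity and decay forward in time, use the left-hand hypotheses at $t_1$ to propagate backward in time via the reflection symmetry, and then combine the two one-sided conclusions at a single interior time to produce genuine two-sided membership in $H^s(\R)\cap L^2(|x|^r\,dx)$; a persistence argument would then spread this to the whole interval. First I would regard $t_0$ as the initial time. The bound $\int_0^\infty |x|^{n_1}|u(x,t_0)|^2\,dx<\infty$ is the decay hypothesis of Theorem \ref{DIR}: reading that theorem with $m=0$, $k=n_1$ (so that $n_1=k+\floor{m/2}$) yields persistence of the right-hand decay $\sup_{t_0<t\le T}\int_\epsilon^\infty \langle x_+\rangle^{n_1}u^2(x,t)\,dx<\infty$, while reading it with $k=0$, $m=2n_1$ yields the right-hand regularity $\partial_x^{2n_1}u(\cdot,t)\in L^2(\alpha,\infty)$ for $t\in(t_0,T]$. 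Independently, the hypothesis $\partial_x^{n_2}u(\cdot,t_0)\in L^2(a,\infty)$ is the regularity hypothesis of Theorem \ref{PROP}, which propagates to $\partial_x^{n_2}u(\cdot,t)\in L^2$ on right half-lines for $t\in(t_0,T]$. Taking the larger of the two regularity orders, for every $t\in(t_0,T]$ the solution has right-hand decay of order $n_1$ and right-hand regularity of order $A:=\max\{2n_1,n_2\}$.

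Next I would exploit the reflection symmetry noted before the corollary: if $u$ solves \eqref{KDV5G}, then $v(x,t):=u(-x,-t)$ solves an equation of the same form (only the signs of the coefficients change, and Theorems \ref{PROP}, \ref{DECAY}, \ref{DIR} are insensitive to these values). The left-hand hypotheses on $u$ at $t_1$ become right-hand hypotheses on $v$ at $-t_1$, so applying the previous paragraph to $v$ and undoing the reflection gives, for every $t\in[-T,t_1)$, left-hand decay of order $n_3$ and left-hand regularity of order $B:=\max\{2n_3,n_4\}$.

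Now fix any $t^*\in(t_0,t_1)$. On the right half-line $u(\cdot,t^*)$ has decay of order $n_1$ and regularity of order $A$, and on the left half-line it has decay of order $n_3$ and regularity of order $B$. Splitting $\int_\R=\int_0^\infty+\int_{-\infty}^0$ and combining the two sides shows $u(\cdot,t^*)\in H^s(\R)\cap L^2(|x|^r\,dx)$ with $s=\min\{A,B\}$ and $r=\min\{n_1,n_3\}$, exactly the indices in the statement. The point is that this is now honest two-sided membership, not a one-sided estimate.

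Finally I would pass from the single time $t^*$ to the whole interval, and I expect this to be the main obstacle. The one-sided theorems do not suffice here: left-hand regularity, for instance, propagates only backward in time (cf. Corollary \ref{NU} and the discussion following it), so one cannot simply re-run the propagation results from $t^*$ to cover $[t_1,T]$ on the left or $[-T,t_0]$ on the right. The crucial extra input is instead that genuine two-sided regularity and decay, once present at one time and with no singularity on either side, persist both forward and backward. Since the solution already exists on all of $[-T,T]$ in the space $H^{\sigma}(\R)$, $\sigma>5/2$, furnished by Theorem C, I would invoke the persistence property of the well-posedness theory in $H^s(\R)\cap L^2(|x|^r\,dx)$---when $s\le\sigma$ the Sobolev part is automatic and only the weighted estimate must be propagated, while when $s>\sigma$ one has $s>5/2$ and Theorem C applies in $H^s$---to conclude that $u\in C([-T,T];H^s(\R)\cap L^2(|x|^r\,dx))$, with continuity in $t$ coming from that theory. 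By comparison, the matching of derivative and weight orders through the relation $n=k+\floor{m/2}$ of Theorem \ref{DIR} carried out in the first two paragraphs is routine.
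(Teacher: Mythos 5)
Your proposal is correct and follows essentially the route the paper intends: propagate regularity and decay forward from $t_0$ via Theorems \ref{PROP}, \ref{DECAY} and \ref{DIR}, backward from $t_1$ via the reflection $u(-x,-t)$ (using that the theorems are insensitive to the coefficient values), and combine at an intermediate time to obtain honest two-sided membership in $H^s(\mathbb{R})\cap L^2(|x|^r\,dx)$. For the final persistence step that you flag as the main obstacle and leave somewhat vague, note that the min--max structure of the indices automatically gives $s\geq 2\min\{n_1,n_3\}=2r$, so the Kenig--Ponce--Vega weighted well-posedness in $X_{s,m}$ cited in the introduction (which for fifth order requires $s\geq\max\{s_0,2m\}$ and is two-sided in time) supplies exactly the persistence you invoke, extendable from the local existence time to all of $[-T,T]$ by a Gronwall argument against the norms of the Theorem C solution, which are already controlled on the whole interval.
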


In Section \ref{S:2} we construct cutoff functions which 
are needed to prove Theorems \ref{PROP}, \ref{DECAY} 
and \ref{DIR}. 
Theorems \ref{PROP} and \ref{DECAY} are proved in Sections \ref{S:3} and \ref{S:4}, respectively. In Section \ref{S:5} we prove Theorem \ref{DIR}. The proof of Corollary \ref{NU} is found in Section \ref{S:6}. We conclude in Section \ref{S:7} with an extension to a more general class of fifth order models.
\end{section}

\begin{section}{Construction of Cutoff Function}\label{S:2}

In this section we construct cutoff functions which 
are needed to prove Theorems \ref{PROP}, \ref{DECAY} 
and \ref{DIR}.  
Define the polynomial
\begin{equation*}
\rho(x) = 2772 \int_0^x y^5(1-y)^5 \; dy
\end{equation*}
which satisfies
\begin{align*}
\rho(0) &= 0,		\qquad	\rho(1)=1, \\
\rho'(0) &= \rho''(0) = \cdots = \rho^{(5)}(0) = 0, \\
\rho'(1) &= \rho''(1) = \cdots = \rho^{(5)}(1) = 0
\end{align*}
with $0<\rho,\rho'$ for $0<x<1$.
Much of the complexity of our construction airses when handling the ratio which appears in \eqref{ENERGY}, 
see Section \ref{S:3} below. Thus we note that the expression
\begin{equation} \label{RhoRatio}
\frac{(\rho'''(x))^2}{\rho'(x)}
	= -277200 x (x-1) \left(2-9 x+9 x^2\right)^2
\end{equation}
is continuous for $x\in[0,1]$ and vanishes at the endpoints.
For $\epsilon,b>0$, 
define $\chi \in C^5(\mathbb{R})$ by
\begin{equation*}
\chi(x;\epsilon,b) =
\begin{cases}
	0				& x \leq \epsilon, \\
	\rho((x-\epsilon)/b)	& \epsilon < x < b+\epsilon, \\
	1				& b + \epsilon \leq x.
\end{cases}
\end{equation*}
By construction $\chi$ is positive for $x\in(\epsilon,\infty)$ and all derivatives are supported in $[\epsilon,b+\epsilon]$. A scaling argument and \eqref{RhoRatio} provides
\begin{equation} \label{CutoffRatio}
\sup_{x\in[\epsilon,b+\epsilon]} \left|\frac{(\chi'''(x;\epsilon,b))^2}{\chi'(x;\epsilon,b)} \right| \leq c(b)
\end{equation}
and for $j=1,2,3,4,5$
\begin{equation} \label{CutoffBounded}
|\chi^{(j)}(x;\epsilon,b)| \leq c(j;b).
\end{equation}
A computation produces
\begin{equation*}
\frac{(\chi'''(x;\epsilon,b))^2}{\chi'(x;\epsilon,b)}\cdot\frac{1}{\chi'(x;\epsilon/3,b+\epsilon)}
	= q_0(x)\frac{(x-\epsilon)(b+\epsilon-x)}{(3 x-\epsilon )^5 (3 b-3 x+4 \epsilon )^5}
\end{equation*}
and for $j=1,2,3,4,5$
\begin{equation*}
\frac{\chi^{(j)}(x;\epsilon,b)}{\chi'(x;\epsilon/3,b+\epsilon)}
	= q_j(x)\frac{(x-\epsilon)(b+\epsilon-x)}{(3 x-\epsilon )^5 (3 b-3 x+4 \epsilon )^5}
\end{equation*}
where $q_0,\dots,q_5$ are polynomials. In each of the previous two cases, the right-hand side is continuous on the interval $x\in[\epsilon,b+\epsilon]$, hence bounded. These computations lead to the following estimates, which will be used in a later inductive argument:
\begin{equation} \label{CutoffRatioExpanded}
\sup_{x\in[\epsilon,b+\epsilon]} \left|\frac{(\chi'''(x;\epsilon,b))^2}{\chi'(x;\epsilon,b)}\right|
	\leq c(\epsilon;b)\chi'(x;\epsilon/3,b+\epsilon)
\end{equation}
and for $j=1,2,3,4,5$
\begin{equation} \label{CutoffExpanded}
\sup_{x\in[\epsilon,b+\epsilon]} \left|\chi^{(j)}(x;\epsilon,b)\right|
	\leq c(j;\epsilon;b)\chi'(x;\epsilon/3,b+\epsilon).
\end{equation}

Additionally, we define $\chi_n \in C^5(\mathbb{R})$ via the formula
\begin{equation*}
\chi_n(x;\epsilon,b) = x^n\chi(x;\epsilon,b).
\end{equation*}
It is helpful to make the auxillary definition
\begin{equation*}
p(y) = 462 - 1980y + 3465y^2 - 3080y^3 + 1386y^4 - 252y^5,
\end{equation*}
whose only real root occurs at $y\approx1.29727$. Note that for $n\in\mathbb{Z}^+$
\begin{equation} \label{CutoffNDerivative}
\chi_n'(x;\epsilon,b) = nx^{n-1}\chi(x;\epsilon,b) + x^n\chi'(x;\epsilon,b)
\end{equation}
which is positive for $\epsilon < x \leq b+\epsilon$. Hence the expression
\begin{equation*}
\frac{(\chi_n'''(x;\epsilon,b))^2}{\chi_n'(x;\epsilon,b)}
\end{equation*}
is continuous in this interval. To prove that it is bounded in $[\epsilon,b+\epsilon]$, we must only analyze the limit $x \rightarrow \epsilon^+$. First observe
\begin{equation*}
\chi_n'(x;\epsilon,b) = \left(\frac{x-\epsilon}{b}\right)^5
	\left( \frac{n}{b}x^{n-1}(x-\epsilon)p\left(\frac{x-\epsilon}{b}\right)
		+ \frac{2772}{b}x^n \left(1-\frac{x-\epsilon}{b}\right)^5 \right)
\end{equation*}
so that
\begin{equation*}
\lim_{x\rightarrow\epsilon^+} \frac{(\chi_n'''(x;\epsilon,b))^2}{\chi_n'(x;\epsilon,b)}
	= \left(\frac{b^6}{2772\epsilon^n}\right) \lim_{x\rightarrow\epsilon^+} \frac{(\chi_n'''(x;\epsilon,b))^2}{(x-\epsilon)^5}.
\end{equation*}
Each term of $\chi_n'''$ has a factor of $(x-\epsilon)^3$ implying the above limit vanishes. Hence
\begin{equation} \label{CutoffNRatio}
\sup_{x\in[\epsilon,b+\epsilon]} \left|\frac{(\chi_n'''(x;\epsilon,b))^2}{\chi_n'(x;\epsilon,b)} \right|
	\leq c(n;b)
\end{equation}
and so
\begin{equation} \label{CutoffNRatio2}
\left| \frac{(\chi_n'''(x;\epsilon,b))^2}{\chi_n'(x;\epsilon,b)} \right|
	\leq c(n;b)(1+\chi_n(x;\epsilon,b)).
\end{equation}
Each term of \eqref{CutoffNDerivative} is nonnegative and $\chi'$ is supported in $[\epsilon,b+\epsilon]$, hence
\begin{equation*}
\chi_n'(x;\epsilon,b) \leq c(n;b)(1+\chi_n(x;\epsilon,b)).
\end{equation*}
Using the Leibniz rule, it similarly follows for $j=1,2,3,4,5$ that
\begin{equation} \label{CutoffNDerivatives}
|\chi_n^{(j)}(x;\epsilon,b)| \leq c(n;j;b)(1+\chi_n(x;\epsilon,b)).
\end{equation}
Assuming $n\geq3$, notice that \eqref{CutoffNRatio} and
\begin{equation*}
\frac{(\chi_n'''(x;\epsilon,b))^2}{\chi_n'(x;\epsilon,b)} = (n-1)(n-2)x^{n-5}
	\qquad (b+\epsilon \leq x)
\end{equation*}
imply
\begin{equation} \label{CutoffNRatioToMinusOne}
\left|\frac{(\chi_n'''(x;\epsilon,b))^2}{\chi_n'(x;\epsilon,b)} \right|
	\leq c(n;\epsilon;b)\chi_{n-1}(x;\epsilon/3,b+\epsilon).
\end{equation}
A similar argument holds for $n=1,2$. Next we prove for $j=1,2,3,4,5$
\begin{equation} \label{CutoffNPrimeToMinusOne}
|\chi_n^{(j)}(x;\epsilon,b)| \leq c(n;j;\epsilon;b) \chi_{n-1}(x;\epsilon/3,b+\epsilon).
\end{equation}
This follows by definition when $b+\epsilon \leq x$; thus it suffices to prove
\begin{equation*}
\sup_{x\in[\epsilon,b+\epsilon]} \left|\frac{\chi_n^{(j)}(x;\epsilon,b)}{\chi_{n-1}(x;\epsilon/3,b+\epsilon)}\right| \leq c(n,j,\epsilon,b).
\end{equation*}
We demonstrate the details for $j=1$, the remaining cases being similar. In this case
\begin{equation*}
\frac{\chi_n^{(j)}(x;\epsilon,b)}{\chi_{n-1}(x;\epsilon/3,b+\epsilon)}
	= \frac{n\chi(x;\epsilon,b)}{\chi(x;\epsilon/3,b+\epsilon)}
		+ \frac{x\chi'(x;\epsilon,b)}{\chi(x;\epsilon/3,b+\epsilon)}.
\end{equation*}
Assuming $\epsilon \leq x \leq b+\epsilon$,
\begin{equation*}
\frac{n\chi(x;\epsilon,b)}{\chi(x;\epsilon/3,b+\epsilon)}
	= n\left(\frac{b+\epsilon}{b}\right)^6 \frac{(x-\epsilon)^6p\left(\frac{x-\epsilon}{b}\right)}{(x-\frac\epsilon3)^6p\left(\frac{x-\frac\epsilon3
}{b+\epsilon}\right)}.
\end{equation*}
Note that $\frac{x-\frac\epsilon3}{b+\epsilon}<1$ so that $p$ does not vanish in $[\epsilon,b+\epsilon]$. Hence this above expression is continuous and bounded on this interval. Similarly for the second term
\begin{equation*}
\frac{x\chi'(x;\epsilon,b)}{\chi(x;\epsilon/3,b+\epsilon)}
	= \frac{2772(b+\epsilon)^6(x-\epsilon)^5(b-x+\epsilon)^5x}{b^{11}(x-\frac\epsilon3)p\left(\frac{x-\frac\epsilon3
}{b+\epsilon}\right)}.
\end{equation*}
This proves \eqref{CutoffNPrimeToMinusOne} in the case $j=1$.
\end{section}

\begin{section}{Proof of Theorem 1}\label{S:3}

In this section, we prove Theorem 1. 
We show several lemmas which are needed to 
prove Theorems \ref{PROP}, \ref{DECAY} and \ref{DIR}. 
The first lemma is an analogue of \eqref{KATO} to implement Kato's energy estimate argument which is proved by 
Isaza-Linares-Ponce \cite{MR3279353}. 

\begin{lemma}
Let $u \in C^{\infty}([0,T];H^\infty(\mathbb{R}))$ be 
a solution to IVP
\begin{equation}
\begin{cases}
	\partial_tu - \partial_x^5u = F
		\qquad x,t\in\mathbb{R} \\
	u(x,0) = u_0(x)
\end{cases}
\end{equation}
and let $\psi \in C^5(\mathbb{R}^2)$ 
satisfy $\partial_x\psi\geq0$. Then we have
\begin{align} \label{ENERGY}
& \frac{d}{dt} \int u^2\psi \; dx + \int (\partial_x^2u)^2\partial_x\psi \; dx \notag \\
	&\qquad\qquad
		\leq \int u^2\left\{\partial_t\psi + \frac32\partial_x^5\psi + \frac{25}{16}\frac{(\partial_x^3\psi)^2}{\partial_x\psi}\right\} \; dx
		+ 2 \int u F \psi \; dx.
\end{align}
\end{lemma}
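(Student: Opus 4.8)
The plan is to derive the energy identity by multiplying the equation $\partial_t u - \partial_x^5 u = F$ by $2u\psi$ and integrating in $x$, then carefully integrating by parts to extract the good term $\int(\partial_x^2 u)^2\partial_x\psi\,dx$ while controlling all remaining contributions. First I would compute $\frac{d}{dt}\int u^2\psi\,dx = \int 2u\,\partial_t u\,\psi\,dx + \int u^2\,\partial_t\psi\,dx$, which already accounts for the $\partial_t\psi$ term on the right. Substituting $\partial_t u = \partial_x^5 u + F$ gives
\begin{equation*}
\frac{d}{dt}\int u^2\psi\,dx = 2\int u\,\partial_x^5 u\,\psi\,dx + \int u^2\,\partial_t\psi\,dx + 2\int uF\psi\,dx,
\end{equation*}
so the crux is to analyze the dispersive term $I := 2\int u\,\partial_x^5 u\,\psi\,dx$ and show it produces $-\int(\partial_x^2 u)^2\partial_x\psi\,dx$ plus lower-order pieces bounded by $\int u^2\{\frac32\partial_x^5\psi + \frac{25}{16}(\partial_x^3\psi)^2/\partial_x\psi\}\,dx$.

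The \textbf{main computation} is repeated integration by parts on $I$. Writing $u\,\partial_x^5 u = \partial_x(u\,\partial_x^4 u) - \partial_x u\,\partial_x^4 u$, and iterating, one transfers derivatives off the highest-order factor. The goal is to reorganize $2\int u\,\partial_x^5 u\,\psi\,dx$ into a sum of terms of the form $\int (\partial_x^a u)(\partial_x^b u)\psi^{(c)}\,dx$ where the pure fifth-derivative content is converted into a term $-\int(\partial_x^2 u)^2\partial_x\psi\,dx$ (the sharp smoothing gain) together with terms carrying higher derivatives of $\psi$. I expect that after integration by parts the cross terms involving $\partial_x^2 u\,\partial_x^3 u$ and $\partial_x u\,\partial_x^2 u$ will appear weighted against $\partial_x^2\psi$, $\partial_x^3\psi$, etc.; these must be handled so that no term with more than two derivatives on $u$ survives except the good $\int(\partial_x^2u)^2\partial_x\psi$.

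The \textbf{hard part} will be controlling the term in which two derivatives land on $u$ paired against $\partial_x^3\psi$ — schematically a contribution like $\int (\partial_x u)(\partial_x^2 u)\,\partial_x^3\psi\,dx$ or $\int(\partial_x^2 u)^2\,\partial_x^3\psi\,dx$ after further integration by parts — because it has the same derivative count on $u$ as the good term but the wrong sign or an unfavorable weight. The standard device, which explains the precise constant $\tfrac{25}{16}$, is to complete the square: one writes the offending expression involving $\partial_x^2 u$ and $\partial_x^3\psi$ as a perfect square of the form $\left(\partial_x^2 u\,(\partial_x\psi)^{1/2} + \tfrac{c\,\partial_x^3\psi}{(\partial_x\psi)^{1/2}}\,u\right)^2$ minus a remainder proportional to $u^2 (\partial_x^3\psi)^2/\partial_x\psi$. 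Choosing the constant optimally (here giving $\tfrac{25}{16}$) lets the square be absorbed into $-\int(\partial_x^2 u)^2\partial_x\psi$ up to a harmless reduction, while the remainder is exactly the $\tfrac{25}{16}(\partial_x^3\psi)^2/\partial_x\psi$ term. This requires $\partial_x\psi\geq0$ so that the completed square and the division by $\partial_x\psi$ make sense; the integration by parts is justified because $u\in C^\infty([0,T];H^\infty(\mathbb{R}))$ and $\psi\in C^5$, ensuring all boundary terms vanish and all integrals converge.
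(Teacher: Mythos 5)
Your proposal is correct and follows essentially the same argument as the paper's Lemma 1 (which the paper quotes from Isaza--Linares--Ponce): repeated integration by parts gives $2\int u\,\partial_x^5u\,\psi\,dx = -5\int(\partial_x^2u)^2\,\partial_x\psi\,dx - 5\int u\,\partial_x^2u\,\partial_x^3\psi\,dx + \tfrac32\int u^2\,\partial_x^5\psi\,dx$, and the cross term is handled precisely by your completing-the-square device, via $5ab \le 4a^2 + \tfrac{25}{16}b^2$ with $a = |\partial_x^2u|(\partial_x\psi)^{1/2}$ and $b = |u|\,|\partial_x^3\psi|(\partial_x\psi)^{-1/2}$, so that four of the five units of the good term are sacrificed and coefficient $1$ survives on the left. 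The only detail worth making explicit is that the integration by parts must produce the coefficient $5$ (not $1$) on $\int(\partial_x^2u)^2\partial_x\psi\,dx$ for your ``harmless reduction'' to leave anything behind, and this is exactly what forces the constant $\tfrac{25}{16}$, as you anticipated.
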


By interpolation we have the following lemma, which is required to apply the inductive hypothesis.

\begin{lemma} \label{PLemmaInduct}
Suppose $u_0\in L^2(\mathbb{R})$ and for some $l\in\mathbb{Z}^+$, $l\geq2$, $x_0\in\mathbb{R}$
\begin{equation}
\|\partial_x^lu_0\|_{L^2(x_0,\infty)}^2
	= \int_{x_0}^\infty |\partial_x^lu_0|^2 \; dx
	< \infty.
\end{equation}
For any $k=1,2,\dots,l-1$ and $\delta>0$
\begin{equation}
\|\partial_x^ku_0\|_{L^2(x_0+\delta,\infty)}^2
	= \int_{x_0+\delta}^\infty |\partial_x^ku_0|^2 \; dx
	< \infty.
\end{equation}
\end{lemma}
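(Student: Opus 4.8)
The plan is to localize the problem with a smooth cutoff so that a single application of the global Gagliardo--Nirenberg interpolation inequality yields the claim. Fix $k\in\{1,\dots,l-1\}$ and $\delta>0$, and choose $\chi\in C^\infty(\R)$ with $0\le\chi\le1$, $\chi\equiv0$ on $(-\infty,x_0+\delta/2]$ and $\chi\equiv1$ on $[x_0+\delta,\infty)$, so that every derivative $\chi^{(j)}$ with $j\ge1$ is supported in the compact set $K=[x_0+\delta/2,x_0+\delta]\subset(x_0,\infty)$. Set $g=\chi u_0$. Since $|g|\le|u_0|$ we have $g\in L^2(\R)$, so it suffices to prove $\partial_x^lg\in L^2(\R)$; the interpolation inequality
\begin{equation*}
\|\partial_x^kg\|_2\lesssim\|g\|_2^{1-k/l}\|\partial_x^lg\|_2^{k/l}
\end{equation*}
then forces $\partial_x^kg\in L^2(\R)$, and restricting to $(x_0+\delta,\infty)$, where $g=u_0$ and hence $\partial_x^kg=\partial_x^ku_0$, gives the conclusion.

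The one point requiring care is that the intermediate derivatives of $u_0$ are a priori uncontrolled, which is exactly the difficulty the lemma must overcome. I would first record the standard one-dimensional interior regularity fact: if a distribution has both itself and its $l$-th derivative in $L^2$ on an open interval, then it lies in $H^l$ on every compact subinterval. Applying this on $(x_0,\infty)$, where $u_0\in L^2$ and $\partial_x^lu_0\in L^2$ by hypothesis, gives $u_0\in H^l_{\mathrm{loc}}(x_0,\infty)$; in particular $\partial_x^ju_0\in L^2(K)$ for every $0\le j\le l$. The Leibniz rule, valid in $H^l_{\mathrm{loc}}(x_0,\infty)$, then expands
\begin{equation*}
\partial_x^lg=\sum_{j=0}^l\binom{l}{j}\chi^{(j)}\partial_x^{l-j}u_0
\end{equation*}
on $(x_0,\infty)$. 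The $j=0$ term $\chi\,\partial_x^lu_0$ is supported in $\{\chi\ne0\}\subset(x_0,\infty)$ and lies in $L^2$ because $\partial_x^lu_0\in L^2(x_0,\infty)$; each $j\ge1$ term is supported in the compact set $K$, on which $\partial_x^{l-j}u_0\in L^2$, hence is a compactly supported $L^2$ function. Since $g$ vanishes identically on $(-\infty,x_0+\delta/2]$ and coincides with an $H^l$ function in a neighborhood of the seam $x=x_0+\delta/2$ (which sits inside $(x_0,\infty)$), we conclude $\partial_x^lg\in L^2(\R)$, that is, $g\in H^l(\R)$.

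The main obstacle is conceptual rather than computational: one must resist interpolating directly on the half-line, where the intermediate norms are precisely what is unknown, and instead pass to the genuinely global $H^l(\R)$ function $g$. The shrinkage from $(x_0,\infty)$ to $(x_0+\delta,\infty)$ is what provides room for the cutoff transition, and the interior regularity step is what makes the commutator terms $\chi^{(j)}\partial_x^{l-j}u_0$ meaningful and square integrable. Once $g\in H^l(\R)$ is established, the remaining interpolation is routine.
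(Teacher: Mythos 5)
Your proof is correct and follows essentially the route the paper intends: the paper disposes of this lemma with the single phrase ``by interpolation,'' and your argument --- cutting off with $\chi$, verifying $g=\chi u_0\in H^l(\mathbb{R})$ via the Leibniz expansion, and then applying the global Gagliardo--Nirenberg inequality $\|\partial_x^kg\|_2\lesssim\|g\|_2^{1-k/l}\|\partial_x^lg\|_2^{k/l}$ --- is the standard realization of exactly that sketch. The interior regularity step you include (so that $\partial_x^{l-j}u_0\in L^2$ on the compact transition region, making the commutator terms $\chi^{(j)}\partial_x^{l-j}u_0$ meaningful) is the detail the paper leaves implicit, and you have handled it correctly.
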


We reproduce for convenience a lemma in the work of Isaza, Linares and Ponce \cite{IsazaLinaresPonce2014a}.
\begin{lemma} 
Let $j_1,j_2,j_3\in\mathbb{Z}^+$ and $\epsilon,b>0$. Suppose $\psi(x;\epsilon,b)$ has support in $[\epsilon,\infty)$, $\psi\geq0$ and $\psi(x;\epsilon,b)\geq1$ whenever $x\geq b+\epsilon$. Then
\begin{align} \label{LinftyTrick}
&\int |\partial_x^{j_1}u\partial_x^{j_2}u\partial_x^{j_3}u|\psi(x) \; dx \notag \\
	&\qquad\qquad
		\lesssim \left\{\int(\partial_x^{1+j_1}u)^2\psi(x) \; dx
		+ \int(\partial_x^{j_1}u)^2\psi(x) \; dx
		+ \int(\partial_x^{j_1}u)^2|\psi'(x)| \; dx\right\} \notag \\
	&\qquad\qquad\qquad
		\times \int(\partial_x^{j_2}u)^2\psi(x;\epsilon/5,4\epsilon/5) \; dx
		+ \int(\partial_x^{j_3}u)^2\psi(x) \; dx.
\end{align}
In particular, we may choose $\psi=\chi, \chi', \chi_n$ or $\chi_n'$.
\end{lemma}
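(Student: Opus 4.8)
The plan is to prove \eqref{LinftyTrick} by placing one factor in $L^\infty$ and pairing the remaining two through the Cauchy--Schwarz and Young inequalities. The asymmetry of the right-hand side dictates the roles: the factor $\partial_x^{j_1}u$ receives a weighted $H^1$-type (Agmon) bound, producing the three terms inside the braces; the factor $\partial_x^{j_2}u$ is kept in $L^2$ but against the smaller cutoff $\psi(\cdot;\epsilon/5,4\epsilon/5)$; and $\partial_x^{j_3}u$ is kept in $L^2$ against $\psi$, contributing the additive term. The only analytic inputs are the one-dimensional Sobolev embedding localized by a weight, the Cauchy--Schwarz inequality, and the elementary bound $AB\le \tfrac12 A^2+\tfrac12 B^2$.

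First I would establish the weighted pointwise estimate
\begin{equation*}
\sup_x (\partial_x^{j_1}u)^2(x)\,\psi(x)
	\le \int (\partial_x^{1+j_1}u)^2\psi \; dx
	+ \int (\partial_x^{j_1}u)^2\psi \; dx
	+ \int (\partial_x^{j_1}u)^2|\psi'| \; dx,
\end{equation*}
which is exactly the brace in \eqref{LinftyTrick}. Since $u\in H^\infty$ and $(\partial_x^{j_1}u)^2\psi$ vanishes to the left of $\mathrm{supp}\,\psi$, the fundamental theorem of calculus gives $(\partial_x^{j_1}u)^2\psi(x)=\int_{-\infty}^x \partial_y\{(\partial_x^{j_1}u)^2\psi\}\,dy$; expanding the derivative and applying $2|ab|\le a^2+b^2$ to the cross term produces the three displayed integrals. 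Writing $f=\partial_x^{j_1}u$, $g=\partial_x^{j_2}u$, $h=\partial_x^{j_3}u$ and factoring $\psi=\sqrt\psi\cdot\sqrt\psi$, I would then estimate
\begin{equation*}
\int |fgh|\psi \; dx
	\le \|f\sqrt\psi\|_\infty \int |g||h|\sqrt\psi \; dx
	\le \|f\sqrt\psi\|_\infty \Big(\int_{\mathrm{supp}\,\psi} g^2 \; dx\Big)^{1/2}\Big(\int h^2\psi \; dx\Big)^{1/2}.
\end{equation*}
Because $\mathrm{supp}\,\psi\subseteq[\epsilon,\infty)$ and the companion cutoff $\psi(\cdot;\epsilon/5,4\epsilon/5)\ge1$ on $[\epsilon,\infty)$, the middle factor is bounded by $\int g^2\psi(\cdot;\epsilon/5,4\epsilon/5)\,dx$. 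Applying $AB\le\tfrac12A^2+\tfrac12B^2$ with $A=\|f\sqrt\psi\|_\infty\big(\int g^2\psi(\cdot;\epsilon/5,4\epsilon/5)\big)^{1/2}$ and $B=\big(\int h^2\psi\big)^{1/2}$, and inserting the pointwise estimate for $\|f\sqrt\psi\|_\infty^2$, yields \eqref{LinftyTrick}.

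Finally, for the closing assertion I would verify that each of $\chi,\chi',\chi_n,\chi_n'$ is nonnegative with support contained in $[\epsilon,\infty)$, and that the dominating cutoff $\chi(\cdot;\epsilon/5,4\epsilon/5)$ is $\ge1$ throughout $[\epsilon,\infty)$, both of which are immediate from the construction in Section \ref{S:2}. I expect the main difficulty to be bookkeeping rather than analysis: one must distribute the weight so that the paired factors land on $\int h^2\psi$ carrying the full weight while the $L^\infty$ factor carries exactly $\sqrt\psi$, and one must invoke the support together with the normalization of the auxiliary cutoff to convert the localized integral of $g^2$ into the weighted integral on the right. No loss of derivatives occurs, since the embedding spends a single derivative on the $j_1$-factor, which is precisely accounted for by the $\partial_x^{1+j_1}u$ term in the brace.
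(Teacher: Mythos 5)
Your proposal is correct and takes essentially the same route as the paper: both arguments give $\partial_x^{j_1}u$ the weighted $L^\infty$ role via the bound $\|F\|_{L^\infty}\le\|\partial_xF\|_{L^1}$ applied to $F=(\partial_x^{j_1}u)^2\psi$ (producing exactly the brace), absorb the localized $L^2$ norm of $\partial_x^{j_2}u$ using $\psi(\cdot;\epsilon/5,4\epsilon/5)\ge1$ on $[\epsilon,\infty)$, and separate the $j_3$-factor by Young's inequality. The only difference is the order of the elementary steps — you apply Cauchy--Schwarz and then Young to $\|f\sqrt\psi\|_\infty$, whereas the paper applies Young first and then extracts $\|(\partial_x^{j_1}u)^2\psi\|_{L^\infty}$ — which is immaterial since $\|f\sqrt\psi\|_{L^\infty}^2=\|f^2\psi\|_{L^\infty}$.
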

\begin{proof}
Using Cauchy-Schwarz and Young's inequality, followed by the Sobolev embedding, we have
\begin{align*}
\lefteqn{\int |\partial_x^{j_1}u\partial_x^{j_2}u\partial_x^{j_3}u|\psi \; dx}\\
	&\leq
		\frac12\int (\partial_x^{j_1}u)^2(\partial_x^{j_2}u)^2\psi \; dx
		+ \frac12\int(\partial_x^{j_3}u)^2\psi \; dx \\
	&\leq
		\frac12\|(\partial_x^{j_1}u)^2\psi\|_{L_x^\infty}
			\int_\epsilon^\infty(\partial_x^{j_2}u)^2 \; dx
		+ \frac12\int(\partial_x^{j_3}u)^2\psi \; dx \\
	&\leq
		\frac12\|\partial_x((\partial_x^{j_1}u)^2\psi)\|_{L_x^1}
			\int(\partial_x^{j_2}u)^2\psi(x;\epsilon/5,4\epsilon/5) \; dx
		+ \frac12\int(\partial_x^{j_3}u)^2\psi \; dx
\end{align*}
since $\psi(x;\epsilon,b)$ is nonnegative, supported on $[\epsilon,\infty)$ and $\psi(x;\epsilon,b)\geq1$ when $x \geq b+\epsilon$. Furthermore, Young's inequality yields
\begin{align*}
\|\partial_x((\partial_x^{j_1}u)^2\psi)\|_{L_x^1}
	&\leq
		2\int|\partial_x^{j_1}u\partial_x^{1+j_1}u|\psi \; dx
		+ \int(\partial_x^{j_1}u)^2|\psi'| \; dx \\
	&\leq
		\int(\partial_x^{1+j_1}u)^2\psi \; dx
		+ \int(\partial_x^{j_1}u)^2\psi \; dx
		+ \int(\partial_x^{j_1}u)^2|\psi'| \; dx.
\end{align*}
This completes the proof of Lemma 3.
\end{proof}

We now turn to the proof of Theorem \ref{PROP}. As the argument is translation invariant, we consider only $x_0=0$. Additionally, the estimates are performed for nonlinearity $u\partial_x^3u$; a later remark explains how to control other terms. We invoke constants $c_0,c_1,c_2,\dots,$ depending only on the parameters
\begin{equation} \label{PCONST}
c_k=c_k(l,T,\epsilon,b,\|u_0\|_{H^s}; \|\partial_x^lu_0\|_{L^2(x_0,\infty)}; \|\partial_x^3u\|_{L_T^1L_x^\infty})
\end{equation}
whose value may change from line to line. We explicitly record dependence on the parameter $\nu$ using the notation $c(\nu;d)$, which indicates a constant taking the form of a degree-$d$ polynomial in $\nu$:
\begin{equation*}
c(\nu;d) = c_d\nu^d + \cdots + c_1\nu + c_0.
\end{equation*}

We first describe the formal calculations and later provide justification using a limiting argument. Let $u$ be a smooth solution of IVP \eqref{KDV5G}, differentiate the equation $l$-times and apply \eqref{ENERGY} with $\phi(x,t)=\chi(x+\nu t;\epsilon,b)$. Using properties \eqref{CutoffRatioExpanded} and \eqref{CutoffExpanded} to expand the region of integration in the first term, we arrive at
\begin{align}
& \frac{d}{dt} \int (\partial_x^lu)^2\chi(x+\nu t) \; dx
	+ \int (\partial_x^{l+2}u)^2 \chi'(x+\nu t) \; dx \notag \\
	& \qquad\qquad\qquad
		\leq \int (\partial_x^lu)^2\left\{\nu\chi'(x+\nu t) + \frac32\chi^{(5)}(x+\nu t) + \frac{25}{16}\frac{(\chi'''(x+\nu t))^2}{\chi'(x+\nu t)}\right\} \; dx \notag \\
	& \qquad\qquad\qquad\qquad\qquad
		+ 2\int\partial_x^lu\partial_x^l(u\partial_x^3u)\chi(x+\nu t) \; dx \notag \\
	& \qquad\qquad\qquad
		\leq A+B,\label{PINEQ}
\end{align}		
where		
\begin{align*}		
	A&=\nu\int (\partial_x^lu)^2\chi'(x+\nu t) \; dx+c(\epsilon;b)\int (\partial_x^lu)^2\chi'(x+\nu t;\epsilon/3,b+\epsilon) \; dx,\\
	B&=2\int\partial_x^lu\partial_x^l(u\partial_x^3u)\chi(x+\nu t) \; dx. 
\end{align*}
We have used the convention that when $\epsilon$ and $b$ are suppressed, $\chi(x)=\chi(x;\epsilon,b)$. The argument proceeds via induction on $l$ where, for fixed $l$, we integrate \eqref{PINEQ} in time, integrate $B$ by parts and apply a correction to account for the loss of derivatives.
\newline

\underline{Case $l=1$}
Integrating in the time interval $[0,t]$ and applying \eqref{CutoffBounded}, we obtain
\begin{equation} \label{P1A_INEQ}
\left|\int_0^t A \; d\tau\right|
	\leq c_0(1+\nu) \int_0^t \int (\partial_xu)^2 \; dxd\tau
	\leq c_0(1+\nu)T\|u\|_{L_T^\infty H_x^1}^2
\end{equation}
where $0\leq t\leq T$. After integrating by parts, we find
\begin{align}
B
	&= \int \partial_xu(\partial_x^2u)^2 \chi(x+\nu t) \; dx + 3 \int u(\partial_x^2u)^2 \chi'(x+\nu t) \; dx \notag \\
	&\qquad
		+ \frac43 \int (\partial_xu)^3 \chi''(x+\nu t) \; dx
		- \int u(\partial_xu)^2\chi'''(x+\nu t) \; dx.
\end{align}
The inequality \eqref{CutoffBounded} 
and the Sobolev embedding imply 
\begin{align} \label{P1B_INEQ}
\left|\int_0^t B \; d\tau\right|
	&\leq c_1(\|\partial_xu\|_{L_T^\infty L_x^\infty}+\|u\|_{L_T^\infty L_x^\infty}) \int_0^t\int(\partial_xu)^2 + (\partial_x^2u)^2 \; dxd\tau \notag \\
	&\leq c_1T\|u\|_{L_T^\infty H_x^2}^3.
\end{align}
Integrating the inequality \eqref{PINEQ} and combining \eqref{P1A_INEQ} and \eqref{P1B_INEQ}, 
we obtain 
\begin{align*}
&\int (\partial_xu)^2\chi(x+\nu t) \; dx
		+ \int_0^t \int (\partial_x^3u)^2 \chi'(x+\nu \tau) \; dxd\tau \\
	&\qquad\qquad
		\leq \int (\partial_xu_0)^2\chi(x) \; dx +  \left| \int_0^t A +B \; d\tau \right| \\
	&\qquad\qquad
		\leq c_0\nu + c_1.
\end{align*}
As the right-hand side is independent of $t$, the result follows.
\newline

\underline{Case $l=2$}
Similar to the previous case, integrating in the time interval $[0,t]$, we find
\begin{equation} \label{P2A_INEQ}
\left|\int_0^t A \; d\tau\right|
	\leq c_0(1+\nu) \int_0^t \int (\partial_x^2u)^2 \; dxd\tau
	\leq c_0(1+\nu)T\|u\|_{L_T^\infty H_x^2}^2
\end{equation}
where $0\leq t\leq T$. After integrating by parts, 
we see
\begin{align} \label{P2B}
B
	&= - \int \partial_xu(\partial_x^3u)^2 \chi(x+\nu t) \; dx + 3 \int u(\partial_x^3u)^2 \chi'(x+\nu t) \; dx \notag \\
	&\qquad
		- \int \partial_xu(\partial_x^2u)^2\chi''(x+\nu t) \; dx
		- \int u(\partial_x^2u)^2\chi'''(x+\nu t) \; dx.
\end{align}
This expression exhibits a loss of derivatives in that the term
\begin{equation} \label{P2B_LOSS}
\int \partial_xu (\partial_x^3u)^2 \chi(x+\nu t) \; dx
\end{equation}
can be controlled neither by the well-posedness theory nor by the $l=1$ case (without the technique introduced in Section 7). In \cite{MR2455780}, Kwon introduced a modified energy to overcome a similar issue. In particular, a smooth solution $u$ to the IVP \eqref{KDV5G} satisfies the following identity:
\begin{eqnarray} 
\lefteqn{\frac{d}{dt} \int u(\partial_xu)^2\chi \; dx}
\nonumber\\
	&=& -5\int\partial_xu(\partial_x^3u)^2\chi\;dx
		-5\int u(\partial_x^3u)^2\chi'\;dx
		+ \frac{28}{3}\int (\partial_x^2u)^3\chi' \; dx 
		\nonumber\\
	& &+ 21\int\partial_xu(\partial_x^2u)^2\chi'' \; dx
		+ 5\int u(\partial_x^2u)^2\chi''' \; dx
		- \frac{10}{3} \int (\partial_xu)^3\chi^{(4)} \; dx \
		\nonumber \\
	& &- \int u(\partial_xu)^2\chi^{(5)} \; dx
		+ 4\int u\partial_xu(\partial_x^2u)^2\chi \; dx
		+ 3\int u^2(\partial_x^2u)^2\chi' \; dx \nonumber \\
	& &- \frac94\int(\partial_xu)^4 \chi' \; dx
		- \int u\partial_x^2u(\partial_xu)^2\chi' \; dx
		- 4\int u(\partial_xu)^3 \chi'' \; dx \nonumber \\
	& &- \int u^2(\partial_xu)^2 \chi''' \; dx
		+ \nu \int u(\partial_xu)^2 \chi' \; dx
		\label{P2B_KWON}
\end{eqnarray}
where $\chi^{(j)}$ denotes $\chi^{(j)}(x+\nu t)$. We use this identity to eliminate \eqref{P2B_LOSS} from \eqref{P2B}, yielding
\begin{align} \label{P2BK}
B
	&= \frac15 \frac{d}{dt}\int u(\partial_xu)^2\chi(x+\nu t) \; dx
		+ 4 \int u(\partial_x^3u)^2\chi'(x+\nu t) \; dx \notag \\
	&\qquad
		- \frac45 \int u\partial_xu(\partial_x^2u)^2\chi(x+\nu t) \; dx
		- \frac\nu5 \int u(\partial_xu)^2 \chi'(x+\nu t) \; dx \notag \\
	&\qquad
		+ \sum_{\substack{0 \leq j_1,j_2,j_3 \leq 2 \\ 1 \leq j_4 \leq 5}}c_{j_1,j_2,j_3,j_{4}}
			\int \widetilde{\partial_x^{j_1}u}\partial_x^{j_2}u(\partial_x^{j_3}u)^2\chi^{(j_4)}(x+\nu t) \; dx
\end{align}
where the notation $\widetilde{\partial_x^{j_1}u}$ indicates this factor may be omitted. That is, since $0\leq j_1,j_2 \leq 2$,
\begin{equation*}
\|\widetilde{\partial_x^{j_1}u}\partial_x^{j_2}u\|_{L_T^\infty L_x^\infty}
	\leq \|u\|_{L_T^\infty H_x^s} + \|u\|_{L_T^\infty H_x^s}^2.
\end{equation*}
Integrating in the time interval $[0,t]$, applying \eqref{CutoffBounded} and the Sobolev embedding, 
we obtain
\begin{align} \label{P2B_LOW}
&\left|\int_0^t\int \widetilde{\partial_x^{j_1}u}\partial_x^{j_2}u(\partial_x^{j_3}u)^2\chi^{(j_4)}(x+\nu\tau) \; dxd\tau\right| \notag \\
	&\qquad\qquad\qquad\qquad
		\leq c_1\|\widetilde{\partial_x^{j_1}u}\partial_x^{j_2}u\|_{L_T^\infty L_x^\infty} \int_0^T\int(\partial_x^{j_3}u)^2 \; dxd\tau \notag \\
	&\qquad\qquad\qquad\qquad
		\leq c_1T\|u\|_{L_T^\infty H_x^s}^3(1+\|u\|_{L_T^\infty H_x^s})
\end{align}
since $\max\{j_1,j_2,j_3\}\leq2$. 
The fundamental theorem of calculus and Sobolev embedding 
yield 
\begin{align} \label{P2B_INEQ0}
\left|\int_0^t B \; d\tau\right|
	&\leq \left|\int u_0(\partial_xu_0)^2 \chi(x)\;dx\right|
		+ \left|\int u(\partial_xu)^2\chi(x+\nu t)\; dx\right| \notag \\
	&\qquad
		+ 4 \|u\|_{L_T^\infty H_x^1}\int_0^T\int (\partial_x^3u)^2\chi'(x+\nu\tau) \; dxd\tau \notag \\
	&\qquad
		+ \frac45 \|u\|_{L_T^\infty H_x^2}^2 \int_0^T\int(\partial_x^2u)^2\chi(x+\nu\tau) \; dxd\tau \notag \\
	&\qquad
		+ \frac\nu5 \|u\|_{L_T^\infty H_x^1}\int_0^T\int (\partial_xu)^2 \chi'(x+\nu\tau) \; dxd\tau \notag \\
	&\qquad
		+ c_1T\|u\|_{L_T^\infty H_x^s}^3(1+\|u\|_{L_T^\infty H_x^s}).
\end{align}
The first term on the right-hand side is controlled by the Sobolev embedding, the hypothesis on the initial data and Lemma \ref{PLemmaInduct}. The second and third term illustrate the iterative nature of the argument, as they can be bounded by the $l=1$ result. The two remaining integrals are finite by property \eqref{CutoffBounded}. Therefore
\begin{equation} \label{P2B_INEQ}
\left|\int_0^t B \; d\tau\right|
	\leq c_0\nu+c_1.
\end{equation}
Integrating inequality \eqref{PINEQ}, using \eqref{P2A_INEQ}, \eqref{P2B_INEQ} and the hypothesis on the initial data, 
we have
\begin{align*}
&\int (\partial_x^2u)^2\chi(x+\nu t) \; dx
		+ \int_0^t \int (\partial_x^4u)^2 \chi'(x+\nu \tau) \; dxd\tau \\
	&\qquad\qquad
		\leq \int (\partial_x^2u_0)^2\chi(x) \; dx +  \left|\int_0^t A + B \; d\tau\right| \\
	&\qquad\qquad
		\leq c_0\nu+c_1.
\end{align*}
As the right-hand side is independent of $t$, the result follows.
\newline

\underline{Case $l=3$}
Integrating in the time interval $[0,t]$ and applying the $l=1$ result, we obtain 
\begin{align} \label{P3A_INEQ}
\left|\int_0^t A \; d\tau\right|
	&\leq \nu\int_0^T \int (\partial_x^3u)^2\chi'(x+\nu\tau) \; dxd\tau\notag\\
    &\qquad+ c_0\int_0^T \int (\partial_x^3u)^2\chi'(x+\nu\tau;\epsilon/3,b+\epsilon) \; dxd\tau \notag \\
	&\leq c_2\nu^2 + c_1\nu + c_0
\end{align}
where $0\leq t\leq T$. After integrating by parts, 
we find 
\begin{align} \label{P3B}
B
	&= -3 \int \partial_xu(\partial_x^4u)^2 \chi(x+\nu t) \; dx + 3 \int u(\partial_x^4u)^2 \chi'(x+\nu t) \; dx \notag \\
	&\qquad
		+ \int (\partial_x^3u)^3\chi(x+\nu t) \; dx
		- \int u(\partial_x^3u)^2\chi'''(x+\nu t) \; dx.
\end{align}
This expression exhibits a loss of derivatives in the term
\begin{equation} \label{P3B_LOSS}
\int \partial_xu (\partial_x^4u)^2 \chi(x+\nu t) \; dx.
\end{equation}
A smooth solution $u$ to the IVP \eqref{KDV5G} satisfies the following identity:
\begin{align} \label{P3B_KWON}
\lefteqn{\frac{d}{dt} \int u(\partial_x^2u)^2\chi \; dx}
\notag\\
	&= -5\int\partial_xu(\partial_x^4u)^2\chi\;dx
		- 5\int u(\partial_x^4u)^2\chi'\;dx\notag \\
	&\qquad
		+ 5\int (\partial_x^3u)^3\chi \; dx 
		+ 25\int\partial_x^2u(\partial_x^3u)^2\chi' \; dx
		+ 15\int \partial_xu(\partial_x^3u)^2\chi'' \; dx \notag \\
	&\qquad
		+ 5\int u(\partial_x^3u)^2\chi''' \; dx
		+ 2\int u\partial_xu(\partial_x^3u)^2\chi \; dx
		+ 3\int u^2(\partial_x^3u)^2\chi' \; dx \notag \\
	&\qquad
		- \frac{25}{3}\int (\partial_x^2u)^3\chi''' \; dx
		- 5\int \partial_xu(\partial_x^2u)^2\chi^{(4)} \; dx
		- \int u(\partial_x^2u)^2\chi^{(5)} \; dx \notag \\
	&\qquad
		- \int\partial_xu(\partial_x^2u)^3 \chi \; dx
		- 3\int u(\partial_x^2u)^2 \chi' \; dx 
		- 2\int(\partial_xu)^2(\partial_x^2u)^2\chi' \; dx\notag \\
	&\qquad
		- 4\int u\partial_xu(\partial_x^2u0^2\chi'' \; dx
		- \int u^2(\partial_x^2u)^2 \chi''' \; dx 
		+ \nu \int u(\partial_x^2u)^2\chi' \; dx
\end{align}
where $\chi^{(j)}$ denotes $\chi^{(j)}(x+\nu t)$, which we use to eliminate \eqref{P3B_LOSS} from \eqref{P3B}. Thus, ignoring coefficients, we may write
\begin{align} \label{P3BK}
B
	&= \frac{d}{dt}\int u(\partial_x^2u)^2\chi(x+\nu t) \; dx
		+ \int u(\partial_x^4u)^2\chi'(x+\nu t) \; dx \notag \\
	&\qquad
		+ \int(1+u\partial_xu+\partial_x^3u)(\partial_x^3u)^2\chi(x+\nu t) \; dx
		+ \nu \int u(\partial_x^2u)^2\chi' \; dx \notag \\
	&\qquad
		+ \sum_{\substack{0\leq j_1,j_2\leq2 \\ 1 \leq j_3 \leq 3}}c_{j_1,j_2,j_{3}}
			\int \widetilde{\partial_x^{j_1}u}\partial_x^{j_2}u(\partial_x^3u)^2\chi^{(j_3)}(x+\nu t) \; dx \notag \\
	&\qquad
		+ \sum_{\substack{0\leq j_1,j_2\leq2 \\ 1 \leq j_3 \leq 5}}c_{j_1,j_2,j_{3}}
			\int \widetilde{\partial_x^{j_1}u}\partial_x^{j_2}u(\partial_x^2u)^2\chi^{(j_3)}(x+\nu t) \; dx
\end{align}
where the notation $\widetilde{\partial_x^{j_1}u}$ indicates this factor may be omitted. Integrating in the time interval $[0,t]$, applying \eqref{CutoffExpanded}, the Sobolev embedding and the $l=1$ result yields
\begin{align} \label{P3B_LOW1}
\lefteqn{\left|\int_0^t\int \widetilde{\partial_x^{j_1}u}\partial_x^{j_2}u(\partial_x^3u)^2\chi^{(j_3)}(x+\nu\tau) \; dxd\tau\right|} \notag \\
	&\leq c_1\|\widetilde{\partial_x^{j_1}u}\partial_x^{j_2}u\|_{L_T^\infty L_x^\infty} \int_0^T\int(\partial_x^3u)^2\chi'(x+\nu\tau;\epsilon/3,b+\epsilon) \; dxd\tau \notag \\
	&
		\leq (\|u\|_{L_T^\infty H_x^s}+\|u\|_{L_T^\infty H_x^s}^2)(c_0\nu+c_1).
\end{align}
Similarly, integrating in the time interval $[0,t]$, applying \eqref{CutoffBounded} and the Sobolev embedding, we find 
\begin{align} \label{P3B_LOW2}
\lefteqn{\left|\int_0^t\int \widetilde{\partial_x^{j_1}u}\partial_x^{j_2}u(\partial_x^2u)^2\chi^{(j_3)}(x+\nu\tau) \; dxd\tau\right| }\notag \\
	&
		\leq c_1\|\widetilde{\partial_x^{j_1}u}\partial_x^{j_2}u\|_{L_T^\infty L_x^\infty} \int_0^T\int(\partial_x^2u)^2 \; dxd\tau \notag \\
	&
		\leq c_1T\|u\|_{L_T^\infty H_x^s}^3(1+\|u\|_{L_T^\infty H_x^s}).
\end{align}
Hence the fundamental theorem of calculus and Sobolev embedding yield 
\begin{align} \label{P3B_INEQ0}
\left|\int_0^t B \; d\tau\right|
	&\leq \left|\int u_0(\partial_x^2u_0)^2 \chi(x)\;dx\right|
		+ \left|\int u(\partial_x^2u)^2\chi(x+\nu t)\; dx\right| \notag \\
	&\qquad
		+ \|u\|_{L_T^\infty H_x^1}\int_0^T\int(\partial_x^4u)^2\chi'(x+\nu\tau) \; dxd\tau \notag \\
	&\qquad
		+ \int_0^t (1+\|u\|_{L_T^\infty H_x^2}^2 + \|\partial_x^3u(\tau)\|_{L_x^\infty}) \int(\partial_x^3u)^2\chi(x+\nu\tau) \; dxd\tau \notag \\
	&\qquad
		+ (\|u\|_{L_T^\infty H_x^s}+\|u\|_{L_T^\infty H_x^s}^2)(c_0\nu+c_1) \notag \\
	&\qquad
		+ c_1T\|u\|_{L_T^\infty H_x^s}^3(1+\|u\|_{L_T^\infty H_x^s}).
\end{align}
Similar to the $l=2$ case, the first term on the right-hand side is controlled by the hypothesis on the initial data. The second and third terms are finite by the $l=2$ case. Therefore
\begin{equation} \label{P3B_INEQ}
\left|\int_0^t B \; d\tau\right|
	\leq c(\nu;1) + \int_0^t (c_0+c_1\|\partial_x^3u(\tau)\|_{L_x^\infty}) \int(\partial_x^3u)^3\chi(x+\nu\tau) \; dxd\tau.
\end{equation}
Integrating inequality \eqref{PINEQ}, using \eqref{P3A_INEQ}, \eqref{P3B_INEQ} and the hypothesis on the initial data, 
we have
\begin{align*}
y(t)
	&:= \int (\partial_x^3u)^2\chi(x+\nu t) \; dx
		+ \int_0^t \int (\partial_x^5u)^2 \chi'(x+\nu \tau) \; dxd\tau \\
	&\leq \int (\partial_x^3u_0)^2\chi(x) \; dx +  \left|\int_0^t A + B \; d\tau\right| \\
	&\leq c(\nu;2) + \int_0^t (c_0+c_1\|\partial_x^3u(\tau)\|_{L_x^\infty}) \int(\partial_x^3u)^2\chi(x+\nu\tau) \; dxd\tau \\
	&\leq c(\nu;2) + \int_0^t (c_0+c_1\|\partial_x^3u(\tau)\|_{L_x^\infty}) y(\tau) \; dxd\tau.
\end{align*}
Applying Gronwall's inequality produces
\begin{eqnarray*}
\lefteqn{\sup_{0\leq t \leq T} \int (\partial_x^4u)^2\chi(x+\nu t) \; dx
	+ \int_0^T\int(\partial_x^5u)^2 \chi'(x+\nu\tau) \; dxd\tau}\\
	&\leq& c(\nu;2)\exp\left(c_0T + c_1\|\partial_x^3u\|_{L_T^1L_x^\infty}\right).\qquad\qquad
\end{eqnarray*}
This proves the desired result with $l=3$. 
\newline

\underline{Cases $l=4,5,6$} Due to the structure of the IVP, the cases $l=4,5,6$ must be handled individually. The analysis is omitted as it is similar to the cases $l=3$ and $l\geq7$. It can be proved that
\begin{equation*}
\sup_{0\leq t \leq T} \int (\partial_x^lu)^2\chi(x+\nu t) \; dx
	+ \int_0^T \int (\partial_x^{l+2}u)^2 \chi'(x+\nu \tau) \; dxd\tau
		\leq c(\nu;d)
\end{equation*}
where the values of $d$ are summarized in the following table.
\begin{table}[h]
\centering
\begin{tabular}{l|llllll}
	$l$ & 1 & 2 & 3 & 4 & 5 & 6 \\
	$d$ & 1 & 1 & 2 & 2 & 4 & 8
\end{tabular}
\end{table}

\underline{Case $l\geq7$} In the course of this case, we will prove that for $l\geq7$, the final constant obtained after integrating both sides of \eqref{PINEQ} takes the form of a polynomial in $\nu$ with degree $8(l-5)$.

Integrating in the time interval $[0,t]$ and applying the $l-2$ result (assuming $l>7$) we have
\begin{align} \label{PLA_INEQ}
\left|\int_0^t A \; d\tau\right|
	&\leq \nu\int_0^T \int (\partial_x^lu)^2\chi'(x+\nu\tau) \; dxd\tau\notag\\
	&\qquad+ c_0\int_0^T \int (\partial_x^lu)^2\chi'(x+\nu\tau;\epsilon/3,b+\epsilon) \; dxd\tau \notag \\
	&\leq c(\nu;1+8(l-7))
\end{align}
where $0\leq t\leq T$. For $l=7$, this expression has degree 5 in $\nu$. We write
\begin{align} \label{PLB}
B=B_{1}+B_{2}
\end{align}
where 
\begin{align*}
B_{1}&=2\int \partial_x^lu
		\left\{u\partial_x^{l+3}u
		+\binom{l}{1}\partial_xu\partial_x^{l+2}u
		+\binom{l}{2}\partial_x^2u\partial_x^{l+1}u
		\right.\notag\\
	&\qquad\qquad\qquad\qquad\qquad\left.+(1+\binom{l}{3})\partial_x^3u\partial_x^lu\right\}\chi(x+\nu t) \; dx\\
B_{2}&=\sum_{k=1}^{\ceil{l/2}-2} c_{l,k}\int\partial_x^{3+k}u\partial_x^{l-k}u\partial_x^lu\chi(x+\nu t) \; dx
\end{align*}
and $3+k\leq l-k<l$ for $1\leq k\leq\ceil{l/2}-2$. Integrating by parts, we have
\begin{align} \label{PLB1}
B_1=B_{11}+B_{12},
\end{align}
where
\begin{align*}
B_{11}&=(3-2l)\int\partial_xu(\partial_x^{l+1}u)^2\chi(x+\nu t) \; dx,\\
B_{12}&=
\int u(\partial_x^{l+1}u)^2\chi'(x+\nu t) \; dx
+\int\partial_x^3u(\partial_x^lu)^2\chi(x+\nu t) \; dx\\
&\qquad+ \int\partial_x^2u(\partial_x^lu)^2\chi'(x+\nu t) \; dx
+ \int\partial_xu(\partial_x^lu)^2\chi''(x+\nu t) \; dx\\
&\qquad+ \int u(\partial_x^lu)^2\chi'''(x+\nu t) \; dx
\end{align*}
and, in $B_{12}$, we have omitted coefficients depending only on $l$ using the expression \eqref{PLB1}. 
Then integrating in the time interval $[0,t]$, 
where $0 \leq t \leq T$, we obtain 
\begin{align*}
\left|\int_0^t B_{12} \; d\tau\right|
	&\leq \|u\|_{L_T^\infty H_x^1}\int_0^T\int (\partial_x^{l+1}u)^2\chi'(x+\nu\tau) \; dxd\tau \notag \\
	&\qquad
		+ \int_0^t \|\partial_x^3u(\tau)\|_{L_x^\infty} \int(\partial_x^lu)^2\chi(x+\nu\tau) \; dxd\tau \notag \\
	&\qquad
		+ c_0\|u\|_{L_T^\infty H_x^s}\int_0^t\int(\partial_x^lu)^2\chi'(x+\nu\tau) \; dxd\tau
\end{align*}
by the Sobolev embedding and \eqref{CutoffExpanded}. Applying the result for cases $l-1$ and $l-2$, 
we have 
\begin{equation} \label{PLB12_INEQ}
\left|\int_0^t B_{12} \; d\tau\right|
	\leq c(\nu;8(l-6))
		+ \int_0^t \|\partial_x^3u(\tau)\|_{L_x^\infty} \int(\partial_x^lu)^2\chi(x+\nu\tau) \; dxd\tau.
\end{equation}
Observe that term $B_2$ only occurs when $l\geq5$. For $l>5$, note that $4+k<l$. The inequality \eqref{LinftyTrick} 
produces 
\begin{align} \label{PLB2}
\left|B_2\right|
	&\leq \sum_{k=1}^{\ceil{l/2}-2} c_{l,k} \int|\partial_x^{3+k}u\partial_x^{l-k}u\partial_x^lu|\chi(x+\nu t) \; dx \notag \\
	&\leq \int(\partial_x^lu)^2\chi(x+\nu t) \; dx \notag \\
	&\qquad
		+ \sum_{k=1}^{\ceil{l/2}-2} \left\{\int(\partial_x^{4+k}u)^2\chi(x+\nu t) \; dx
		+ \int(\partial_x^{3+k}u)^2\chi(x+\nu t) \; dx
		\right.
		 \notag \\
	&\qquad\qquad\qquad\left.
	+ \int(\partial_x^{3+k}u)^2\chi'(x+\nu t) \; dx\right\}
	\int(\partial_x^{l-k}u)^2\chi(x+\nu t;\epsilon/5,4\epsilon/5) \; dx,
\end{align}
after suppressing constants depending on $l$. Integrating in the time interval $[0,t]$, we have
\begin{align}
&\left| \int_0^t B_2 \; d\tau\right|\notag \\
	&\qquad
		\leq \int_0^t\int(\partial_x^lu)^2\chi(x+\nu\tau) \; dx \notag \\
	&\qquad\qquad
		+ T\sum_{k=1}^{\ceil{l/2}-2} \left(\sup_{0 \leq t \leq T}\int(\partial_x^{l-k}u)^2\chi(x+\nu t;\epsilon/5,4\epsilon/5) \; dx\right)\notag \\
	&\qquad\qquad\qquad\qquad\qquad\qquad\qquad\times		
			\left(\sup_{0 \leq t \leq T}\int(\partial_x^{4+k}u)^2\chi(x+\nu t) \; dx\right) \notag \\
	&\qquad\qquad
		+ T\sum_{k=1}^{\ceil{l/2}-2} \left(\sup_{0 \leq t \leq T}\int(\partial_x^{l-k}u)^2\chi(x+\nu t;\epsilon/5,4\epsilon/5) \; dx\right)\notag \\
	&\qquad\qquad\qquad\qquad\qquad\qquad\qquad\times		
			\left(\sup_{0 \leq t \leq T}\int(\partial_x^{3+k}u)^2\chi(x+\nu t) \; dx\right) \notag \\
	&\qquad\qquad
		+ T\sum_{k=1}^{\ceil{l/2}-2} \left(\sup_{0 \leq t \leq T}\int(\partial_x^{l-k}u)^2\chi(x+\nu t;\epsilon/5,4\epsilon/5) \; dx\right)\notag \\
	&\qquad\qquad\qquad\qquad\qquad\qquad\qquad\times\left(\sup_{0 \leq t \leq T}\int(\partial_x^{3+k}u)^2\chi'(x+\nu t) \; dx\right).\notag
\end{align}
The strongest $\nu$-dependence for $B_2$ arises from analyzing terms of the form:
\begin{equation} \label{PLB2_NU}
\left(\sup_{0 \leq t \leq T} \int (\partial_x^{l-k}u)^2\chi(x+\nu t;\epsilon/5,4\epsilon/5) \; dx \right)
		\left(\sup_{0 \leq t \leq T} \int(\partial_x^{4+k}u)^2\chi(x+\nu t) \; dx\right).
\end{equation}
Each factor in \eqref{PLB2_NU} is finite by the result for cases $l-k$ and $4+k$. The inductive hypothesis further implies that the $\nu$-dependence has the form of a polynomial in $\nu$ having degree
\begin{equation*}
\nu^{8(l-k-5)}\cdot\nu^{8(4+k-5)} = \nu^{8(l-6)}.
\end{equation*}
Hence
\begin{equation} \label{PLB2_INEQ}
\left| \int_0^t B_2 \; d\tau \right|
	\leq c(\nu;8(l-6)) + c_0\int_0^t\int(\partial_x^lu)^2\chi(x+\nu\tau) \; dxd\tau.
\end{equation}
Integrating the inequality \eqref{PINEQ} in the time interval $[0,t]$, where $0 \leq t \leq T$, 
we have
\begin{align} \label{PL_INEQ0}
&\int (\partial_x^lu)^2\chi(x+\nu t) \; dx
	+ \int_0^t\int (\partial_x^{l+2}u)^2 \chi'(x+\nu\tau) \; dxd\tau \notag \\
	&\qquad\qquad\leq
		\int (\partial_x^lu_0)^2\chi(x) \; dx
			+ \left|\int_0^t A + B_{11} + B_{12} + B_2 \; d\tau\right| \notag \\
	&\qquad\qquad\leq
		c(\nu;8(l-6))\notag \\
	&\qquad\qquad\qquad
	 + \left|\int_0^t B_{11} \; d\tau\right|
		+ \int_0^t (c_0+c_1\|\partial_x^3u(\tau)\|_{L_x^\infty})\int(\partial_x^lu)^2\chi(x+\nu\tau) \; dxd\tau
\end{align}
using the hypothesis on the initial data, \eqref{PLA_INEQ}, \eqref{PLB12_INEQ} and \eqref{PLB2_INEQ}. Thus it only remains to estimate the integral involving
\begin{equation*}
B_{11} = (3-2l)\int\partial_xu(\partial_x^{l+1}u)^2\chi(x+\nu t) \; dx,
\end{equation*}
which exhibits a loss of derivatives. Assuming that $u$ satisfies the IVP \eqref{KDV5G}, we rewrite this term by considering the correction factor
\begin{eqnarray}
\lefteqn{\frac{d}{dt} \int u(\partial_x^{l-1}u)^2\chi(x+\nu t) \; dx} \notag \\
	&=&\int\partial_x^5u(\partial_x^{l-1}u)^2\chi(x+\nu t) \; dx
		+\int u\partial_x^3u(\partial_x^{l-1}u)^2\chi(x+\nu t) \; dx \notag \\
	& &+ 2\int u\partial_x^{l-1}u\partial_x^{l+4}u\chi(x+\nu t) \; dx
		+2\int u\partial_x^{l-1}u\partial_x^{l-1}(u\partial_x^3u)\chi(x+\nu t) \; dx \notag \\
	& &+ \nu \int u(\partial_x^{l-1}u)^2\chi'(x+\nu t) \; dx
		\notag\\
	&=:&C_{1}+C_{2}+\widetilde{C_3}+C_{4}+C_{5}.
\end{eqnarray}
Observe that integrating $\widetilde{C_3}$ by parts reveals
\begin{equation}
\widetilde{C_3} =\left(\frac{5}{2l-3}\right)B_{11}+C_3,
\end{equation}
where
\begin{align} \label{PLC3}
C_3
	&= - 5\int u(\partial_x^{l+1}u)^2\chi' \; dx
		+ 5\int\partial_x^3u(\partial_x^lu)^2\chi \; dx \notag \\
	&\qquad
		+ 9\int\partial_x^2u(\partial_x^lu)^2\chi' \; dx
		+ 15\int\partial_xu(\partial_x^lu)^2\chi'' \; dx
		+ \int u(\partial_x^lu)^2\chi''' \; dx \notag \\
	&\qquad
		- 5\int\partial_x^5u(\partial_x^{l-1}u)^2\chi \; dx
		- 5\int\partial_x^4u(\partial_x^{l-1}u)^2\chi' \; dx
		- 9\int\partial_x^3u(\partial_x^{l-1}u)^2\chi'' \; dx \notag \\
	&\qquad
		- 10\int\partial_x^2u(\partial_x^{l-1}u)^2\chi''' \; dx
		- 5\int\partial_xu(\partial_x^{l-1}u)^2\chi^{(4)} \; dx
		- \int u(\partial_x^{l-1}u)^2\chi^{(5)} \; dx.
\end{align}
Here $\chi^{(j)}$ denotes $\chi^{(j)}(x+\nu t;\epsilon,b)$. 
The fundamental theorem of calculus leads to 
\begin{align} \label{PLB11_INEQ}
\left(\frac{5}{2l-3}\right)\left|\int_0^t B_{11} \; d\tau\right|
	&\leq \left|\int u_0(\partial_x^{l-1}u_0)^2\chi(x) \; dx\right| 
		+ \left|\int u(\partial_x^{l-1}u)^2\chi(x+\nu t) \; dx\right| \notag \\
	&\qquad
		+ \left|\int_0^t C_1 + C_2 + C_3 + C_4 + C_5 \; d\tau\right|.
\end{align}
We now concern ourselves with estimating the right-hand side of this expression. By the Sobolev embedding, hypothesis on the initial data, Lemma \ref{PLemmaInduct} and the result for case $l-1$, we have
\begin{align}
&\left|\int u_0(\partial_x^{l-1}u_0)^2\chi(x) \; dx\right| 
		+ \left|\int u(\partial_x^{l-1}u)^2\chi(x+\nu t) \; dx\right| \notag \\
	&\qquad\qquad\leq
		\|u_0\|_{H^s} \|\partial_x^{l-1}u_0\|_{L_x^2((0,\infty))}^2
		+ \|u\|_{L_T^\infty H_x^s} \int (\partial_x^{l-1}u)^2\chi(x+\nu t) \; dx,
\end{align}
which is uniformly bounded by the inductive hypothesis. Applying \eqref{LinftyTrick}, we obtain
\begin{align*}
|C_1|
	&\leq \int\partial_x^5u(\partial_x^{l-1}u)^2\chi(x+\nu t) \; dx \notag \\
	&\leq \int(\partial_x^{l-1}u)^2\chi(x+\nu t) \; dx \notag \\
	&\qquad
		+ \left\{\int(\partial_x^6u)^2\chi(x+\nu t) \; dx
		+ \int(\partial_x^5u)^2\chi(x+\nu t) \; dx
		+ \int(\partial_x^5u)^2\chi'(x+\nu t) \; dx\right\} \notag \\
	&\qquad\qquad
		\times\int(\partial_x^{l-1}u)^2\chi(x+\nu t;\epsilon/5,4\epsilon/5) \; dx.
\end{align*}
Integrating in the time interval $[0,t]$ and following the argument applied to term $B_2$, we see that the strongest $\nu$-dependence for $C_1$ arises from analyzing the term
\begin{equation} \label{PLC1_NU}
\left(\sup_{0 \leq t \leq T} \int (\partial_x^{l-1}u)^2\chi(x+\nu t;\epsilon/5,4\epsilon/5) \; dx \right)
		\left(\sup_{0 \leq t \leq T} \int(\partial_x^6u)^2\chi(x+\nu t) \; dx\right).
\end{equation}
Each factor in \eqref{PLC1_NU} is finite by the result for cases $6$ and $l-1$. Hence for the base case $l=7$, the right-hand side is bounded by $c(\nu;16)$. For $l>7$, the inductive hypothesis further yields that the $\nu$-dependence has the form of a polynomial in $\nu$ with degree determined by
\begin{equation*}
\nu^{8(l-6)}\cdot\nu^8 = \nu^{8(l-5)}.
\end{equation*}
Thus
\begin{equation} \label{PLC1_INEQ}
\left|\int_0^t C_1 \; d\tau\right| \leq c(\nu;8(l-5)).
\end{equation}
It will be clear from the remainder of the argument that \eqref{PLC1_NU} produces the {\em overall} strongest $\nu$-dependence, hence justifying this inductive calculation.

Integrating in time, using the Sobolev embedding and inductive hypothesis, we find 
\begin{align} \label{PLC2_INEQ}
\left|\int_0^t C_2 \; d\tau\right|
	&\leq
		\|u\|_{L_T^\infty H_x^s}\int_0^T\int |\partial_x^3u|(\partial_x^{l-1}u)^2\chi(x+\nu\tau) \; dxd\tau \notag \\
	&\leq
		\|u\|_{L_T^\infty H_x^s} \int_0^T \|\partial_x^3u(\tau)\|_{L_x^\infty} \left(\sup_{0 \leq t \leq T}\int(\partial_x^{l-1}u)^2\chi(x+\nu t) \; dx\right)d\tau \notag \\
	&\leq
		c(\nu;8(l-6))\|u\|_{L_T^\infty H_x^s} \|\partial_x^3u\|_{L_T^1L_x^\infty}.
\end{align}
Integrating in time and using \eqref{CutoffExpanded}, \eqref{LinftyTrick}, the Sobolev embedding and the inductive hypothesis, we have
\begin{equation} \label{PLC3_INEQ}
\left|\int_0^t C_3 \; d\tau\right|
	\leq c(\nu,8(l-6)) + \int_0^t(c_0+c_1\|\partial_x^3u(\tau)\|_{L_x^\infty})\int(\partial_x^lu)^2\chi(x+\nu\tau) \; dxd\tau.
\end{equation}
Expanding but ignoring binomial coeffiecients, we write $C_4=C_{41}+C_{42}$ with
\begin{align} \label{PLC41}
C_{41}
	&= \int u\partial_xu(\partial_x^lu)^2\chi(x+\nu t) \; dx
		+ \int u^2(\partial_x^lu)^2\chi(x+\nu t) \; dx \notag \\
	&\qquad
		+ \int u\partial_x^3u(\partial_x^{l-1}u)^2\chi(x+\nu t) \; dx
		+ \int\partial_xu\partial_x^2u(\partial_x^{l-1}u)^2\chi(x+\nu t) \; dx \notag \\
	&\qquad
		+ \int u\partial_x^2u(\partial_x^{l-1}u)^2\chi'(x+\nu t) \; dx
		+ \int\partial_xu\partial_xu(\partial_x^{l-1}u)^2\chi'(x+\nu t) \; dx \notag \\
	&\qquad
		+ \int u\partial_xu(\partial_x^{l-1}u)^2\chi''(x+\nu t) \; dx
		- \int u^2(\partial_x^{l-1}u)^2\chi'''(x+\nu t) \; dx
\end{align}
and
\begin{equation} \label{PLC42}
C_{42}
	= \sum_{k=1}^{\lfloor (l-1)/2\rfloor-2} c_{l,k}
		\int u\partial_x^{(l-1)-k}u\partial_x^{3+k}u\partial_x^{l-1}u\chi(x+\nu t) \; dx.
\end{equation}
Similar to $C_2$ and $C_3$,
\begin{equation} \label{PLC41_INEQ}
\left|\int_0^t C_{41} \; d\tau\right|
	\leq c(\nu;8(l-6)) + c_0\int_0^t\int(\partial_x^lu)^2\chi(x+\nu\tau)\;dxd\tau.
\end{equation}
Similar to $B_2$, ignoring constants 
we have
\begin{equation} \label{PLC42_INEQ}
\left|\int_0^t C_{42} \; d\tau\right|
	\leq \sum_{k=1}^{\lfloor (l-1)/2\rfloor-2}
		\int_0^T\int |u\partial_x^{(l-1)-k}u\partial_x^{3+k}u\partial_x^{l-1}u|\chi \; dxd\tau
	\leq c(\nu;8(l-6))
\end{equation}
after applying \eqref{LinftyTrick}. Finally, assuming $l>7$, 
we obtain
\begin{equation*} \label{PLC5_INEQ}
\left|\int_0^t C_5 \; d\tau\right|
	\leq \nu\|u\|_{L_T^\infty H_x^{5/2^+}} \int_0^T\int (\partial_x^{l-1}u)^2\chi'(x+\nu \tau) \; dxd\tau \\
	\leq c(\nu;1+8(l-7))
\end{equation*}
(or $c(\nu;3)$ when $l=7$) using the Sobolev embedding and inductive case $l-3$.

Inserting the above into \eqref{PLB11_INEQ} and \eqref{PL_INEQ0}, then using nonnegativity of $\chi,\chi'$, we find
\begin{align}
y(t)
	&:= \int (\partial_x^lu)^2\chi(x+\nu t) \; dx
		+ \int_0^t\int (\partial_x^{l+2}u)^2 \chi'(x+\nu\tau) \; dxd\tau \notag \\
	&\leq c(\nu;8(l-5)) + \int_0^t(c_0+c_1\|\partial_x^3u(\tau)\|_{L_x^\infty})\int(\partial_x^lu)^2\chi(x+\nu\tau) \; dxd\tau \notag \\
	&\leq c(\nu;8(l-5)) + \int_0^t(c_0+c_1\|\partial_x^3u(\tau)\|_{L_x^\infty})y(\tau)\;d\tau.
\end{align}
Hence Gronwall's inequality yields 
\begin{align*}
&\sup_{0\leq t \leq T} \int (\partial_x^lu)^2\chi(x+\nu t) \; dx
	+ \int_0^T\int(\partial_x^{l+2}u)^2 \chi'(x+\nu\tau) \; dxd\tau \\
	&\qquad\qquad\qquad
		\leq c(\nu;8(l-5))\exp\left(c_0T + c_1\|\partial_x^3u\|_{L_T^1L_x^\infty}\right).
\end{align*}
This concludes the proof for the case of smooth data. 

Now we use a limiting argument to justify the previous computations for arbitrary $u_0 \in H^{s}(\mathbb{R})$ with $s>5/2$. 
Fix $\rho \in C_0^\infty(\mathbb{R})$ with $\text{supp}\;\rho \subseteq (-1,1)$, $\rho\geq0$, $\int\rho(x)\;dx=1$ and
\begin{equation*}
\rho_\mu(x) = \frac1\mu\rho\left(\frac{x}{\mu}\right),	\quad	\mu>0.
\end{equation*}
The the solution $u^\mu$ of IVP \eqref{KDV5G} corresponding to smoothed data $u_0^\mu = \rho_\mu \ast u_0$, $\mu\geq0$, satisfies
\begin{equation*}
u^\mu \in C^{\infty}([0,T] : H^\infty(\mathbb{R})).
\end{equation*}
Hence we may conclude
\begin{equation*}
\sup_{0\leq t \leq T} \int (\partial_x^lu^\mu)^2\chi(x+\nu t) \; dx
	+ \int_0^T\int(\partial_x^{l+2}u^\mu)^2 \chi'(x+\nu\tau) \; dxd\tau
	\leq c.
\end{equation*}
where
\begin{equation*}
c=c(l, \nu, \epsilon, R, T; \|u_0^\mu\|_{H^s}; \|\partial_x^lu_0^\mu\|_{L^2(0,\infty)}; \|u^\mu\|_{L_T^\infty H_x^s}; \|\partial_x^3u^\mu\|_{L_T^1L_x^\infty}).
\end{equation*}
To see that this bound is independent of $\mu>0$, first note
\begin{equation*}
\|u_0^\mu\|_{H^s} \leq \|\widehat{\rho_\mu}\|_\infty \|u_0\|_{H^s} \leq \|u_0\|_{H^s}.
\end{equation*}
As $\chi\equiv0$ for $x<\epsilon$, restricting $0<\mu<\epsilon$ it follows
\begin{equation*}
(\partial_x^lu_0^\mu)^2\chi(x;\epsilon,b)
	= (\rho_\mu \ast \partial_x^lu_01_{[0,\infty)})^2\chi(x;\epsilon,b).
\end{equation*}
Thus by Young's inequality
\begin{align*}
\int_\epsilon^\infty (\partial_x^lu_0^\mu)^2(x) \; dx
	&= \int_\epsilon^\infty (\rho_\mu\ast\partial_x^lu_01_{[0,\infty)})^2(x) \; dx \\
	&\leq \|\rho_\mu\|_1^2 \int_\epsilon^\infty (\partial_x^lu_0)^2(x) \; dx \\
	&\leq \|\partial_x^lu_0\|_{L^2((0,\infty))}^2.
\end{align*}
From Kwon's local well-posedness result \cite{MR2455780} we have
\begin{equation*}
\|u^\mu\|_{L_T^\infty H_x^s} + \|\partial_x^{3}u^\mu\|_{L_T^1L_x^\infty}
	\leq c(\|u_0^\mu\|_{H^s})
	\leq c(\|u_0\|_{H^s})
\end{equation*}
and so we may replace the bound $c=c(\mu)$ with $\tilde{c}$ as in \eqref{PCONST1}.

As the solution depends continuously on the initial data,
\begin{equation*}
\sup_{0 \leq t \leq T} \|u^\mu(t)-u(t)\|_{H^{5/2^+}} \downarrow0
	\quad\text{as}\quad	\mu\downarrow0.
\end{equation*}
Combining this fact with the $\mu$-uniform bound $\tilde{c}$, weak compactness and Fatou's lemma, the theorem holds for all $u_0 \in H^{s}(\mathbb{R})$ with $s>5/2$. This completes 
the proof of Theorem 1 for nonlinearity $u\partial_x^3u$.  

Including nonlinearity $\partial_xu\partial_x^2u$, term $B$ in \eqref{PINEQ} will contain a term
\begin{equation*}
2\int\partial_x^lu\partial_x^l(\partial_xu\partial_x^2u)\chi(x+\nu t) \; dx.
\end{equation*}
As this nonlinearity has a total of three derivatives, integrating by parts produces a form very similar to \eqref{PLB}. The nonlinearity $u^2\partial_xu$, containing only a single derivative, shows no loss of derivatives (see Section \ref{S:7} for a more thorough treatment). 
This completes the proof of Theorem \ref{PROP}.
\end{section}

\begin{section}{Proof of Theorem 2}\label{S:4}

In this section we prove Theorem 2. 
Let $u$ be a smooth solution of IVP \eqref{KDV5G}, differentiate the equation $l$-times and apply \eqref{ENERGY} with $\phi(x,t)=\chi_n(x+\nu t;\epsilon,b)$ to arrive at
\begin{align}
& \frac{d}{dt} \int (\partial_x^lu)^2\chi_n(x+\nu t) \; dx
	+ \int (\partial_x^{l+2}u)^2 \chi_n'(x+\nu t) \; dx \notag \\
	& \qquad\qquad\qquad
		\leq A+B,\label{DINEQ}
\end{align}
where 
\begin{eqnarray*}		
A&=&
\int (\partial_x^lu)^2\left\{\nu\chi_n'(x+\nu t) + \frac32\chi_n^{(5)}(x+\nu t) + \frac{25}{16}\frac{(\chi_n'''(x+\nu t))^2}{\chi_n'(x+\nu t)}\right\} \; dx, \\
B&=&2\int\partial_x^lu\partial_x^l(u\partial_x^3u)
\chi_n(x+\nu t) \; dx.		
\end{eqnarray*}		
The proof proceeds by induction on $l$, however, for fixed $l$ we induct on $n$. The base case $n=0$ coincides with the propagation of regularity result. We invoke constants $c_0,c_1,c_2,\dots,$ depending only on the parameters
\begin{equation} \label{DCONST}
c_k = c_k(n,l; \|u_0\|_{H^s}; \|\partial_x^3u\|_{L_T^1L_x^\infty}; \nu; \epsilon; b; T)
\end{equation}
as well as the decay assumptions on the initial data \eqref{DDATA}.

\underline{Case $l=0$}
Using properties \eqref{CutoffNRatio2} and \eqref{CutoffNDerivatives}, we see
\begin{equation*}
|A| \leq c_0\int u^2(1+\chi_n(x+\nu t)) \; dx.
\end{equation*}
and so integrating in the time interval $[0,t]$, 
we have
\begin{equation} \label{D0A_INEQ}
\left|\int_0^t A \; d\tau\right|
	\leq c_0\left\{T\|u\|_{L_T^\infty L_x^2}^2
		+ \int_0^t \int u^2\chi_n(x+\nu\tau) \; dxd\tau \right\}
\end{equation}
where $0\leq t\leq T$. Additionally,
\begin{equation} \label{D0B_INEQ}
\left|\int_0^t B \; d\tau\right|
	\leq 2\int_0^t\|\partial_x^3u(\tau)\|_{L_x^\infty} \int u^2\chi_n(x+\nu\tau) \; dxd\tau.
\end{equation}
Integrating \eqref{DINEQ} in the time interval $[0,t]$, combining \eqref{D0A_INEQ} and \eqref{D0B_INEQ}, 
we have
\begin{align*}
y(t)
	&:= \int u^2\chi_n(x+\nu t) \; dx
		+ \int_0^t\int(\partial_x^2u)^2\chi_n(x+\nu\tau)\;dxd\tau \\
	&\leq \int u_0^2(x)\chi_n(x) \; dx + \left|\int_0^t A+B \; d\tau\right| \\
	&\leq c_0 + \int_0^t (c_1 + c_2\|\partial_x^3u(\tau)\|_{L_x^\infty}) \int u^2\chi_n(x+\nu\tau) \; dxd\tau \\
	&\leq c_0 + \int_0^t (c_1 + c_2\|\partial_x^3u(\tau)\|_{L_x^\infty}) y(\tau) \; dxd\tau.
\end{align*}
using the hypothesis on the initial data. 
Gronwall's inequality yields 
\begin{equation*}
\sup_{0 \leq t \leq T} \int u^2\chi_n(x+\nu t) \; dx
		+ \int_0^T\int(\partial_x^2u)^2\chi_n(x+\nu\tau)\;dxd\tau
	\leq c_0\exp\left(c_1T+c_2\|\partial_x^3u\|_{L_T^1L_x^\infty}\right).
\end{equation*}
Note that induction in $n$ was not required in this case.
\newline

\underline{Case $l=1$}
Using properties \eqref{CutoffNRatio2} and \eqref{CutoffNDerivatives}, we have
\begin{equation*}
|A| \leq c_0\int (\partial_xu)^2(1+\chi_n(x+\nu t)) \; dx.
\end{equation*}
and so integrating in the time interval $[0,t]$, 
we find 
\begin{equation} \label{D1A_INEQ}
\left|\int_0^t A \; d\tau\right|
	\leq c_0\left\{ T\|u\|_{L_T^\infty H_x^1}^2
		+ \int_0^t \int (\partial_xu)^2\chi_n(x+\nu\tau) \; dxd\tau \right\}
\end{equation}
where $0\leq t\leq T$. After integrating by parts, 
we find
\begin{align}
B
	&= \int \partial_xu(\partial_x^2u)^2 \chi_n(x+\nu t) \; dx + 3 \int u(\partial_x^2u)^2 \chi_n'(x+\nu t) \; dx \notag \\
	&\qquad
		+ \frac43 \int (\partial_xu)^3 \chi_n''(x+\nu t) \; dx
		- \int u(\partial_xu)^2\chi_n'''(x+\nu t) \; dx. \label{D1B}
\end{align}
This expression exhibits a loss of derivatives requiring a correction. A smooth solution $u$ to the IVP \eqref{KDV5G} satisfies the following identity
\begin{align} \label{D1B_KWON}
\lefteqn{\frac{d}{dt} \int u^3\chi_n \; dx}\notag\\
	&= -15\int\partial_xu(\partial_x^2u)^2\chi_n\;dx
		-9\int u(\partial_x^2u)^2\chi_n'\;dx \notag \\
	&\qquad
		+ 10\int(\partial_xu)^3\chi_n'' \; dx
		+ 12\int u(\partial_xu)^2\chi_n''' \; dx
		- \int u^3\chi_n^{(5)} \; dx \notag \\
	&\qquad
		+ 9\int u(\partial_xu)^3\chi_n \; dx
		+ \frac{27}{2} \int u^2(\partial_xu)^2 \chi_n' \; dx
		- \frac34 \int u^4\chi_n''' \; dx \notag \\
	&\qquad
		+ \nu \int u^3 \chi_n' \; dx
\end{align}
after integrating by parts, where $\chi_n^{(j)}$ denotes $\chi_n^{(j)}(x+\nu t)$. Substituting \eqref{D1B_KWON}, we can write \eqref{D1B} as a linear combination of the following terms
\begin{align}
B
	&= \frac{d}{dt} \int u^3 \chi_n \; dx
		+ \int u(\partial_x^2u)^2 \chi_n' \; dx \notag \\
	&\qquad
		+ \int (\partial_xu)^3 \chi_n'' \; dx
		+ \int u(\partial_xu)^2 \chi_n''' \; dx
		+ \int u^3 \chi_n^{(5)} \; dx \notag \\
	&\qquad
		+ \int u(\partial_xu)^3 \chi_n \; dx
		+ \int u^2(\partial_xu)^2 \chi_n' \; dx
		+ \int u^4 \chi_n''' \; dx \notag \\
	&\qquad
		+ \nu \int u^3 \chi_n' \; dx \notag \\
	&=: B_1 + \cdots + B_9.
\end{align}
The fundamental theorem of calculus and 
the Sobolev embedding yield 
\begin{equation}
\left| \int_0^t B_1 \; d\tau \right|
	\leq \|u_0\|_{H^1} \int u_0^2(x) \chi_n(x) \; dx
		+ \|u\|_{L_T^\infty H_x^1} \int u^2 \chi_n(x+\nu t) \; dx
\end{equation}
where $0 \leq t \leq T$. This term is finite by hypothesis \eqref{DDATA} and the case $l=0$. Next,
\begin{equation}
\left|\int_0^t B_2 \; d\tau\right|
	\leq \|u\|_{L_T^\infty H_x^1} \int_0^T\int(\partial_x^2u)^2 \chi_n'(x+\nu\tau) \; dxd\tau,
\end{equation}
which is finite by case $l=0$. Using \eqref{CutoffNPrimeToMinusOne} and the Sobolev embedding, 
we obtain 
\begin{align}
&\left|\int_0^t B_3 + B_4 + B_5 \; d\tau\right| \notag \\
	&\qquad\qquad
		\leq \|u\|_{L_T^\infty H_x^2} \int_0^T\int (\partial_xu)^2|\chi_n''(x+\nu\tau)| + (\partial_xu)^2|\chi_n'''(x+\nu\tau)| \; dxd\tau \notag \\
	&\qquad\qquad\qquad
		+ \|u\|_{L_T^\infty H_x^1} \int_0^T\int u^2|\chi_n^{(5)}(x+\nu\tau)| \; dxd\tau \notag \\
	&\qquad\qquad
		\leq c_0\|u\|_{L_T^\infty H_x^2} \int_0^T\int (\partial_xu)^2\chi_{n-1}(x+\nu\tau;\epsilon/3,b+\epsilon) \; dxd\tau \notag \\
	&\qquad\qquad\qquad
		+ c_1\|u\|_{L_T^\infty H_x^1} \int_0^T\int u^2\chi_{n-1}(x+\nu\tau;\epsilon/3,b+\epsilon) \; dxd\tau.
\end{align}
The first term is finite by induction on $n$ in the current case $l=1$, whereas the second term is finite by the case $l=0$. The Sobolev embedding implies 
\begin{equation}
\left|\int_0^t B_6 \; d\tau\right|
	\leq \|u\|_{L_t^\infty H_x^2}^2 \int_0^t\int (\partial_xu)^2 \chi_n(x+\nu\tau) \; dxd\tau.
\end{equation}
Finally the inequality 
\eqref{CutoffNPrimeToMinusOne} and the Sobolev embedding 
yield 
\begin{equation}
\left|\int_0^t B_7 + B_8 + B_9 \; d\tau\right|
	\leq c_2\|u\|_{L_T^\infty H_x^2}^2 \int_0^T\int u^2\chi_{n-1}(x+\nu\tau;\epsilon/3,b+\epsilon) \; dxd\tau,
\end{equation}
which is finite by case $l=0$. Integrating \eqref{DINEQ} in the time interval $[0,t]$ and combining the above, 
we have 
\begin{align*}
y(t)
	&:= \int (\partial_xu)^2 \chi_n(x+\nu t) \; dx + \int_0^t\int (\partial_x^3u)^2 \chi_n'(x+\nu\tau) \; dxd\tau \\
	&\leq \int (\partial_xu_0)^2(x)\chi_n(x) \; dx + \left|\int_0^t A+B \; d\tau\right| \\
	&\leq c_0 + c_1 \int_0^t \int (\partial_xu)^2 \chi_n(x+\nu\tau) \; dxd\tau \\
	&\leq c_0 + c_1 \int_0^t y(\tau) \; d\tau.
\end{align*}
The result follows by Gronwall's inequality.
\newline

\underline{Cases $l=2,3,4,5$} Due to the structure of the IVP, the cases $l=2,3,4,5$ must be handled individually. The analysis is omitted, however, as it is similar to the cases presented.
\newline

\underline{Case $l\geq6$}
Integrating in the time interval $[0,t]$ and using properties \eqref{CutoffNRatioToMinusOne} and \eqref{CutoffNPrimeToMinusOne}, we have 
\begin{equation} \label{DLA_INEQ}
\left|\int_0^t A \; d\tau\right|
	\leq c_0\int_0^t \int (\partial_x^lu)^2 \chi_{n-1}(x+\nu\tau;\epsilon/3,b+\epsilon) \; dxd\tau,
\end{equation}
which is finite by induction on $n$. Recall \eqref{PLB} and \eqref{PLB1}, wherein we wrote
\begin{equation*}
B = B_{11} + B_{12} + B_2,
\end{equation*}
with the term $B_{11}$ exhibiting a loss of derivatives. Integrating in the time interval $[0,t]$, we  see 
\begin{align} \label{DLB12_INEQ}
\left| \int_0^t B_{12} \; d\tau \right|
	&\leq
		\|u\|_{L_T^\infty H_x^1} \int_0^T\int(\partial_x^{l+1}u)^2 \chi_n'(x+\nu\tau) \; dxd\tau \notag \\
	&\qquad
		+ \int_0^t \|\partial_x^3u(\tau)\|_{L_x^\infty} \int(\partial_x^lu)^2\chi_n(x+\nu\tau) \; dxd\tau \notag \\
	&\qquad
		+ c_0\|u\|_{L_T^\infty H_x^s} \int_0^T\int(\partial_x^lu)^2 \chi_{n-1}(x+\nu\tau) \; dxd\tau
\end{align}
where we have used \eqref{CutoffNPrimeToMinusOne}. The first term is finite by the case $l-1$ and the third is finite by induction on $n$, hence
\begin{equation*}
\left| \int_0^t B_{12} \; d\tau \right|
	\leq c_0 + c_1 \int_0^t \|\partial_x^3u(\tau)\|_{L_x^\infty} \int(\partial_x^lu)^2\chi_n(x+\nu\tau) \; dxd\tau
\end{equation*}

Observe that term $B_2$ only occurs when $l\geq5$. For $l>5$, note that $4+k<l$. 
The inequality \eqref{LinftyTrick} yields 
\begin{align} \label{DLB2}
|B_2|
	&\leq \sum_{k=1}^{\ceil{l/2}-2} c_{l,k} \int|\partial_x^{3+k}u\partial_x^{l-k}u\partial_x^lu|\chi_n(x+\nu t) \; dx \notag \\
	&\leq \int(\partial_x^lu)^2\chi_n(x+\nu t) \; dx \notag \\
	&\qquad
		+ \sum_{k=1}^{\ceil{l/2}-2} \left\{\int(\partial_x^{4+k}u)^2\chi_n(x+\nu t) \; dx
		+ \int(\partial_x^{3+k}u)^2\chi_n(x+\nu t) \; dx\right.\notag \\
	&\qquad\qquad\quad
+\left. \int(\partial_x^{3+k}u)^2\chi_n'(x+\nu t) \; dx\right\}\int(\partial_x^{l-k}u)^2\chi_n(x+\nu t;\epsilon/5,4\epsilon/5) \; dx,
\end{align}
where we have suppressed constants depending on $l$. Integrating in the time interval $[0,t]$, 
we see
\begin{equation} \label{DLB2_INEQ}
\left| \int_0^t B_2 \; d\tau \right|
	\leq c_0 + c_1\int_0^t\int(\partial_x^lu)^2\chi(x+\nu\tau) \; dxd\tau,
\end{equation}
as factors in the summation are estimated via \eqref{CutoffNPrimeToMinusOne} and the inductive hypothesis.

Assuming that $u$ satisfies the IVP \eqref{KDV5G}, we rewrite this term by considering the correction factor
\begin{align}
\frac{d}{dt} \int u(\partial_x^{l-1}u)^2\chi_n(x+\nu t) \; dx 
=\widetilde{C_1}+C_{2}+C_{3}+C_{4},\notag
\end{align}
where
\begin{align*}
\widetilde{C_1}&=
\int\partial_x^5u(\partial_x^{l-1}u)^2\chi_n(x+\nu t) \; dx
		+ 2\int u\partial_x^{l-1}u\partial_x^{l+4}u\chi_n(x+\nu t) \; dx,\\
C_{2}&=
\int u\partial_x^3u(\partial_x^{l-1}u)^2\chi_n(x+\nu t) \; dx,
\\
C_{3}&=
2\int u\partial_x^{l-1}u\partial_x^{l-1}(u\partial_x^3u)\chi_n(x+\nu t) \; dx,\\
C_{4}&=
\nu \int u(\partial_x^{l-1}u)^2\chi_n'(x+\nu t) \; dx.
\end{align*}
Integrating $\widetilde{C_1}$ by parts, 
we have
\begin{equation}
\widetilde{C_1} = \left(\frac{5}{2l-3}\right)B_{11}+C_1,
\end{equation}
where 
\begin{align} \label{DLC3}
C_1
	&= - 5\int u(\partial_x^{l+1}u)^2\chi_n' \; dx
		+ 5\int\partial_x^3u(\partial_x^lu)^2\chi_n \; dx \notag \\
	&\qquad
		+ 15\int\partial_x^2u(\partial_x^lu)^2\chi_n' \; dx
		+ 15\int\partial_xu(\partial_x^lu)^2\chi_n'' \; dx
		\notag \\
	&\qquad	
		+ 5\int u(\partial_x^lu)^2\chi_n''' \; dx 
		- 5\int\partial_x^4u(\partial_x^{l-1}u)^2\chi_n' \; dx
		\notag \\
	&\qquad	
	    - 10\int\partial_x^3u(\partial_x^{l-1}u)^2\chi_n'' \; dx
- 10\int\partial_x^2u(\partial_x^{l-1}u)^2\chi_n''' \; dx \notag \\
	&\qquad
		- 5\int\partial_xu(\partial_x^{l-1}u)^2\chi_n^{(4)} \; dx
		- \int u(\partial_x^{l-1}u)^2\chi_n^{(5)} \; dx.
\end{align}
Here $\chi_n^{(j)}$ denotes 
$\chi_n^{(j)}(x+\nu t;\epsilon,b)$. 
The fundamental theorem of calculus yields 
\begin{eqnarray*}
\lefteqn{\left(\frac{5}{2l-3}\right)\left|\int_0^t B_{11} \; d\tau\right|}\\
	&\leq& \left|\int u_0(\partial_x^{l-1}u_0)^2\chi_n(x) \; dx\right|
		+ \left|\int u(\partial_x^{l-1}u)^2\chi_n(x+\nu t) \; dx\right| \\
	& &\qquad
		+ \left|\int_0^t C_1 + C_2 + C_3 + C_4 \; d\tau\right|.
\end{eqnarray*}
We now concern ourselves with estimating the right-hand side of this expression. First note
\begin{align}
&\left|\int u_0(\partial_x^{l-1}u_0)^2\chi_n(x) \; dx\right|
		+ \left|\int u(\partial_x^{l-1}u)^2\chi_n(x+\nu t) \; dx\right| \notag \\
	&\qquad\qquad\leq
		\|u_0\|_{H^1} \|x^{n/2}\partial_x^{l-1}u_0\|_{L_x^2(\epsilon,\infty)}^2
		+ \|u\|_{L_T^\infty H_x^1} \int (\partial_x^{l-1}u)^2\chi_n(x+\nu t) \; dx,
\end{align}
is bounded by the hypothesis \eqref{DDATA} and the case $l-1$. Similarly to $B_2$ and $B_{12}$, integrating in the time interval $[0,t]$, using \eqref{LinftyTrick} and property \eqref{CutoffNPrimeToMinusOne}, we obtain
\begin{equation}
\left| \int_0^t C_1 \; d\tau \right|
	\leq c_0 + \int_0^t (c_1+c_2\|\partial_x^3u(\tau)\|_{L_x^\infty})\int (\partial_x^lu)^2 \chi_n(x+\nu\tau) \; dxd\tau
\end{equation}
where the term containing $(\partial_x^{l+1}u)^2\chi_n'$ is controlled using the induction case $l-1$, as in \eqref{DLB12_INEQ}.

Using \eqref{LinftyTrick} and the inductive hypothesis, 
we see 
\begin{equation}
\left| \int_0^t C_2 \; d\tau \right| \leq c_0,
\end{equation}
similar to $B_2$. The same technique applies to $C_3$ and $C_4$.

Integrating \eqref{DINEQ} in the time interval $[0,t]$ and combining the above, we find that 
there exists constants as in \eqref{DCONST} such that
\begin{align*}
y(t)
	&:= \int (\partial_x^lu)^2 \chi_n(x+\nu t) \; dx + \int_0^t\int (\partial_x^{l+2}u)^2 \chi_n'(x+\nu\tau) \; dxd\tau \\
	&\leq \int (\partial_x^lu_0)^2(x)\chi_n(x) \; dx + \left|\int_0^t A+B \; d\tau\right| \\
	&\leq c_0 + \int_0^t (c_1+c_2\|\partial_x^3u(\tau)\|_{L_x^\infty}) \int (\partial_x^lu)^2 \chi_n(x+\nu\tau) \; dxd\tau \\
	&\leq c_0 + \int_0^t (c_1+c_2\|\partial_x^3u(\tau)\|_{L_x^\infty}) y(\tau) \; d\tau.
\end{align*}
The result follows by Gronwall's inequality. To handle the case of arbitrary data $u_0 \in H^{s}(\mathbb{R})$ 
with $s>5/2$, a limiting argument similar to the proof of Theorem \ref{PROP} is used. This completes the proof of Theorem 2. 
\end{section}

\begin{section}{Proof of Theorem 3}\label{S:5}

In this section we prove  Theorem 3. 
Integration by parts yields the next lemma.

\begin{lemma} \label{DIR_LEMMA}
Suppose for some $l\in\mathbb{Z}^+$
\begin{equation}
\sup_{0 \leq t \leq T}
	\int (\partial_x^lu)^2 \chi_n(x+\nu t) \; dx
	+ \int_0^T\int (\partial_x^{l+2}u)^2 \chi_n'(x+\nu\tau) \; dxd\tau < \infty.
\end{equation}
Then for every $0<\delta<T$, there exists $\hat{t}\in(0,\delta)$ such that
\begin{equation}
\int (\partial_x^{l+j}u)^2 \chi_{n-1}(x+\nu\hat{t};\epsilon^+,b) \; dx < \infty
	\qquad (j=0,1,2).
\end{equation}
\end{lemma}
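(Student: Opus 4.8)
The plan is to reduce the whole statement to two pointwise comparisons of weights and then argue that the set of admissible times is of full measure. Writing $\epsilon^+>\epsilon$ for the (slightly enlarged) inner parameter, I would first record the two inequalities
$\chi_{n-1}(x;\epsilon^+,b)\le C_1\,\chi_n(x;\epsilon,b)$ and $\chi_{n-1}(x;\epsilon^+,b)\le C_2\,\chi_n'(x;\epsilon,b)$,
valid for every $x$ with constants $C_i=C_i(n,\epsilon,\epsilon^+,b)$. Both are trivial where the left side vanishes, i.e.\ for $x\le\epsilon^+$, while on $[\epsilon^+,\infty)$ they follow by checking that the relevant ratios are bounded. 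Indeed, since $\epsilon^+>\epsilon$, the denominator $\chi_n(\cdot;\epsilon,b)$ is bounded below by a positive constant on any compact subset of $(\epsilon,\infty)$ and grows like $x^n$, whereas the numerator grows like $x^{n-1}$ and vanishes to fifth order at $x=\epsilon^+$, so the first ratio is continuous and bounded on $[\epsilon^+,\infty)$; this is the same mechanism used to prove \eqref{CutoffNPrimeToMinusOne}. The second comparison follows from the first together with $\chi_n'(x;\epsilon,b)\ge n\,\chi_{n-1}(x;\epsilon,b)$, a consequence of \eqref{CutoffNDerivative} and the nonnegativity of both summands.

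With these in hand I would introduce, for $j=0,1,2$, the functions $f_j(t)=\int(\partial_x^{l+j}u)^2\chi_{n-1}(x+\nu t;\epsilon^+,b)\,dx$; since the comparisons are pointwise in $x$, they survive the common translation by $\nu t$. The case $j=0$ is immediate: the first comparison and the hypothesis give $f_0(t)\le C_1\sup_{0\le t\le T}\int(\partial_x^lu)^2\chi_n(x+\nu t;\epsilon,b)\,dx<\infty$ for every $t\in[0,T]$. For $j=2$ the second comparison yields $f_2(t)\le C_2\int(\partial_x^{l+2}u)^2\chi_n'(x+\nu t;\epsilon,b)\,dx$ pointwise in $t$; integrating over $[0,T]$ and invoking the space-time hypothesis shows $f_2\in L^1(0,T)$, hence $f_2(t)<\infty$ for a.e.\ $t$, in particular for a.e.\ $t\in(0,\delta)$.

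The only case requiring real work is the \emph{middle} derivative $j=1$. Here I would integrate by parts twice, using that the boundary terms vanish for the smooth, rapidly decaying solutions to which the lemma is applied, to write $f_1(t)=-\int\partial_x^lu\,\partial_x^{l+2}u\,\chi_{n-1}\,dx+\tfrac12\int(\partial_x^lu)^2\chi_{n-1}''\,dx$, and then bound the first integral by Cauchy--Schwarz and Young to get $f_1(t)\le\tfrac12 f_0(t)+\tfrac12 f_2(t)+\tfrac12\int(\partial_x^lu)^2\,|\chi_{n-1}''(x+\nu t;\epsilon^+,b)|\,dx$. The last term is finite for every $t$: the weight $\chi_{n-1}''(\cdot;\epsilon^+,b)$ is supported in the compact interval $[\epsilon^+,b+\epsilon^+]$ and bounded there by \eqref{CutoffNDerivatives}, while on that set $\chi_n(\cdot;\epsilon,b)$ is bounded below by a positive constant, so the integral is controlled by $\int(\partial_x^lu)^2\chi_n(x+\nu t;\epsilon,b)\,dx$. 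Consequently $f_1(t)<\infty$ whenever $f_2(t)<\infty$.

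Finally I would combine these facts: $f_0$ is finite everywhere, while $f_1$ and $f_2$ are finite off a common null set, so the set of $t\in(0,\delta)$ at which all three are finite has full Lebesgue measure and is therefore nonempty; selecting any $\hat t$ in it gives the conclusion for $j=0,1,2$. I expect the main obstacle to be precisely the $j=1$ step, namely justifying the two integrations by parts (vanishing of the boundary terms) and disposing of the $\chi_{n-1}''$ remainder through its compact support, together with the verification of the two weight comparisons, where the convention $\epsilon^+>\epsilon$ is essential so that the new weight is supported strictly inside the region in which $\chi_n$ and $\chi_n'$ are positive.
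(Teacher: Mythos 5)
Your overall strategy matches what the paper intends: the paper's entire proof of this lemma is the one-line remark ``Integration by parts yields the next lemma,'' and your argument---the pointwise weight comparisons $\chi_{n-1}(\cdot;\epsilon^+,b)\lesssim\chi_n(\cdot;\epsilon,b)$ and $\chi_{n-1}(\cdot;\epsilon^+,b)\lesssim\chi_n'(\cdot;\epsilon,b)$, almost-everywhere-in-$t$ finiteness of the $j=2$ integral from the space--time hypothesis, and the double integration by parts interpolating the middle derivative $j=1$ between $j=0$ and $j=2$---is precisely the natural way to fill in that remark. Both comparisons are sound: the first because the ratio $x^{n-1}/x^n=1/x\le1/\epsilon^+$ is bounded on $[\epsilon^+,\infty)$ where $\chi(\cdot;\epsilon,b)$ is bounded below, and the second because $\chi_n'(x;\epsilon,b)\ge n\chi_{n-1}(x;\epsilon,b)\ge n\chi_{n-1}(x;\epsilon^+,b)$ by \eqref{CutoffNDerivative} and the monotonicity of $\chi$ in its inner parameter.

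One sub-step as written is false: $\chi_{n-1}''(\cdot;\epsilon^+,b)$ is \emph{not} supported in the compact interval $[\epsilon^+,b+\epsilon^+]$ when $n\ge3$, since for $x\ge b+\epsilon^+$ one has $\chi_{n-1}''(x;\epsilon^+,b)=(n-1)(n-2)x^{n-3}$, which does not vanish. So you cannot dispose of the remainder term by compact support alone. The repair is immediate and uses tools you already cite: by \eqref{CutoffNDerivatives}, $|\chi_{n-1}''(x;\epsilon^+,b)|\le c\bigl(1+\chi_{n-1}(x;\epsilon^+,b)\bigr)$, and since $\chi_{n-1}''(\cdot;\epsilon^+,b)$ vanishes for $x\le\epsilon^+$ while $\chi_n(x;\epsilon,b)\ge(\epsilon^+)^n\chi(\epsilon^+;\epsilon,b)>0$ on $[\epsilon^+,\infty)$, the constant $1$ can be absorbed as well, yielding $|\chi_{n-1}''(x;\epsilon^+,b)|\le c\,\chi_n(x;\epsilon,b)$ for all $x$; the remainder in your $j=1$ identity is then bounded uniformly in $t$ by the first term of the hypothesis, exactly as your final sentence asserts, so the error is local and self-healing. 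Separately, the boundary terms in the two integrations by parts deserve a word of justification: the approximating solutions lie in $C([0,T];H^\infty(\mathbb{R}))$ but need not decay rapidly, and $\chi_{n-1}$ grows polynomially, so one should, e.g., integrate against a truncated weight and pass to the limit by monotone convergence using the finiteness of the weighted quantities supplied by the hypothesis---a technicality the paper also suppresses.
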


To prove Theorem 3, it suffices to consider an example; fix $n=9$ in the hypothesis of the theorem. Then we may apply Theorem \ref{DECAY} with $(l,n)=(0,9)$. Thus, after applying Lemma \ref{DIR_LEMMA}, there exists $t_0\in(0,\delta/2)$ such that
\begin{equation*}
\int (u^2 + (\partial_xu)^2 + (\partial_x^2u)^2) \chi_8(x+\nu t_0;\epsilon^+,b) \; dx < \infty.
\end{equation*}
Hence we may apply Theorem \ref{DECAY} with $(l,n)=(2,8)$ and find $t_1 \in(t_0,\delta/2)$ such that
\begin{equation*}
\int (u^2 + \cdots + (\partial_x^4u)^2) \chi_7(x+\nu t_1;\epsilon^+,b) \; dx < \infty.
\end{equation*}
Continuing in this manner, applying Theorem \ref{DECAY} with $(l,n)=(4,7),(6,6),\dots,(18,0)$ provides the existince of $\hat{t}\in(\delta/2,\delta)$ such that
\begin{equation*}
\int (u^2 + \cdots + (\partial_x^{19}u)^2) \chi(x+\nu\hat{t};\epsilon^+,b) \; dx < \infty.
\end{equation*}
Finally, we can apply Theorem \ref{PROP} with $l=19$, completing the proof.
\end{section}

\begin{section}{Proof of Corollary \ref{NU}}
\label{S:6}

The proof of Corollary \ref{NU} relies on the following lemma, which follows by considering a dyadic decomposition of the interval $[0,\infty)$. Observe that the lemma also applies when integrating a nonnegative function on the interval $[-(a+\epsilon),-\epsilon]$, implying decay on the left half-line.
\begin{lemma} \label{NU_LEMMA}
Let $f:[0,\infty) \rightarrow [0,\infty)$ 
be continuous. If for $a>0$
\begin{equation*}
\int_0^a f(x) \; dx \leq ca^\alpha
\end{equation*}
then for every $\varepsilon>0$\
\begin{equation*}
\int_0^\infty \frac{1}{\langle x \rangle^{\alpha+\varepsilon}} f(x) \; dx \leq c(\alpha,\varepsilon).
\end{equation*}
\end{lemma}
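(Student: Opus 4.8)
The plan is to estimate the integral by decomposing the half-line $[0,\infty)$ dyadically. On each dyadic block the weight $\langle x\rangle^{-(\alpha+\varepsilon)}$ is essentially constant, which lets me factor it out and then apply the hypothesis $\int_0^a f\,dx \le ca^\alpha$ to the entire block at once. First I would split
\[
\int_0^\infty \frac{f(x)}{\langle x\rangle^{\alpha+\varepsilon}}\,dx
= \int_0^1 \frac{f(x)}{\langle x\rangle^{\alpha+\varepsilon}}\,dx
+ \sum_{j=0}^\infty \int_{2^j}^{2^{j+1}} \frac{f(x)}{\langle x\rangle^{\alpha+\varepsilon}}\,dx.
\]
The first term is harmless: since $\langle x\rangle\ge1$ the weight is at most $1$, so it is bounded by $\int_0^1 f\,dx \le c$ by the hypothesis with $a=1$.

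For the dyadic blocks I would use that $\langle x\rangle \ge 2^j$ for $x\in[2^j,2^{j+1}]$, so that (using $\alpha+\varepsilon\ge0$, which holds in every application since there $\alpha\ge0$) the weight obeys $\langle x\rangle^{-(\alpha+\varepsilon)} \le 2^{-j(\alpha+\varepsilon)}$ on the block. Because $f\ge0$, the hypothesis applied with $a=2^{j+1}$ gives $\int_{2^j}^{2^{j+1}} f \le \int_0^{2^{j+1}} f \le c\,2^{(j+1)\alpha}$. Combining the two estimates yields
\[
\int_{2^j}^{2^{j+1}} \frac{f(x)}{\langle x\rangle^{\alpha+\varepsilon}}\,dx
\le 2^{-j(\alpha+\varepsilon)}\, c\, 2^{(j+1)\alpha}
= c\,2^\alpha\, 2^{-j\varepsilon}.
\]

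Summing over $j$ then finishes the argument: the $\alpha$-exponents cancel in the $j$-dependence, leaving the geometric series $\sum_{j\ge0} 2^{-j\varepsilon}$, which converges precisely because $\varepsilon>0$ and sums to $(1-2^{-\varepsilon})^{-1}$. Hence
\[
\int_0^\infty \frac{f(x)}{\langle x\rangle^{\alpha+\varepsilon}}\,dx
\le c + c\,2^\alpha\,(1-2^{-\varepsilon})^{-1} =: c(\alpha,\varepsilon),
\]
which is the claimed bound.

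I do not expect a genuine analytic obstacle here; the one point to handle carefully is arranging the exponents so that the block-by-block bound collapses to $c\,2^\alpha 2^{-j\varepsilon}$, making the positivity of $\varepsilon$ (rather than any condition on $\alpha$) the exact requirement for summability. The same dyadic decomposition, carried out on an interval of the form $[-(a+\epsilon),-\epsilon]$ instead of $[0,a]$, gives the left-half-line version of the estimate needed in the proof of Corollary \ref{NU}.
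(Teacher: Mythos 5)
Your proof is correct and follows exactly the paper's approach: the paper proves this lemma precisely ``by considering a dyadic decomposition of the interval $[0,\infty)$,'' which is the decomposition you carry out, and your block estimate $2^{-j(\alpha+\varepsilon)}\cdot c\,2^{(j+1)\alpha}=c\,2^{\alpha}2^{-j\varepsilon}$ with the geometric series in $\varepsilon>0$ supplies the details the paper leaves implicit. Your closing remark about transporting the argument to intervals $[-(a+\epsilon),-\epsilon]$ also matches the paper's observation preceding the lemma.
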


Now we prove Corollary \ref{NU}.
\begin{proof}
Recall that for $l\geq6$, Theorem \ref{PROP} with $x_0=0$ states
\begin{equation*}
\sup_{0 \leq t \leq T} \int_{\epsilon-\nu t}^\infty (\partial_x^lu)^2(x,t) \; dx \leq c(\nu;8(l-5)).
\end{equation*}
For fixed $t\in(0,T)$
\begin{equation*}
\int_{\epsilon-\nu t}^\infty (\partial_x^lu)^2(x,t) \; dx
	= \left(\int_{\epsilon-\nu t}^\epsilon + \int_\epsilon^\infty\right)(\partial_x^lu)^2(x,t) \; dx
	:= I + II.
\end{equation*}
Theorem \ref{PROP} with $\nu=0$ yields control of $II$, so we focus on $I$. For $\nu^*$ large enough, $\nu>\nu^*$ implies
\begin{equation*}
I = \int_{\epsilon-\nu t}^\epsilon (\partial_x^lu)^2(x,t) \; dx \leq ct^{-8(l-5)}(\nu t)^{8(l-5)}.
\end{equation*}
Applying Lemma \ref{NU_LEMMA} with $a=\nu t$ and $\alpha=8(l-5)$, we find
\begin{equation*}
\int_{-\infty}^\epsilon \frac{1}{\langle x \rangle^{8(l-5)+\varepsilon}} (\partial_x^lu)^2(x,t) \; dx < \infty
\end{equation*}
for $\varepsilon>0$.
This completes the proof of Corollary \ref{NU}.
\end{proof}

\end{section}

\begin{section}{Extensions to Other Models}\label{S:7}

In this section we prove the following extension of Theorem \ref{PROP}, which applies to those equations described by Theorem A.
\begin{theorem} \label{PROPE}
Consider the class of initial value problems
\begin{equation} \label{KDV5E}
\begin{cases}
	\partial_tu - \partial_x^5u + Q(u,\partial_xu,\partial_x^2u,\partial_x^3u) = 0,
		\qquad x,t\in\mathbb{R}, \\
	u(x,0) = u_0(x),
\end{cases}
\end{equation}
where $Q:\mathbb{R}^4\rightarrow\mathbb{R}$ is a polynomial having no constant or linear terms. Let $u$ be a solution to IVP \eqref{KDV5E} satisfying
\begin{equation*}
u \in C([-T,T] ; X_{s,m}),
	\quad m\in\mathbb{Z}, s\in\mathbb{R},
\end{equation*}
such that $m \geq m_0$ and $s \geq \max\{s_0,2m\}$ for a nonnegative integer $m_0$ and positive real number $s_0$ determined by the form of the nonlinearity $Q$. If $u_0 \in X_{s,m}$ additionally satisfies
\begin{equation}
\|\partial_x^lu_0\|_{L^2(x_0,\infty)}^2
	= \int_{x_0}^\infty (\partial_x^lu_0)^2(x) \; dx
	< \infty,
\end{equation}
for some $l \in \mathbb{Z}^+, x_0 \in \mathbb{R}$, then $u$ satisfies
\begin{equation}
\sup_{0 \leq t \leq T} \int_{x_0 + \epsilon - \nu t}^\infty (\partial_x^k u)^2(x,t) \; dx \leq c
\end{equation}
for any $\nu\geq0, \epsilon>0$ and each $k=0,1,\dots,l$ with
\begin{equation} \label{PECONST1}
c = c(l; \nu; \epsilon; T; \|u_0\|_{X_{s,m}}; \|\partial_x^lu_0\|_{L^2(x_0,\infty)}).
\end{equation}
Moreover, for any $\nu\geq0, \epsilon>0$ and $R>\epsilon$
\begin{equation}
\int_0^T \int_{x_0 + \epsilon - \nu t}^{x_0 + R - \nu t} (\partial_x^{l+2}u)^2(x,t) \; dxdt \leq \tilde{c}
\end{equation}
with
\begin{equation} \label{PECONST2}
\tilde{c} = \tilde{c}(l; \nu; \epsilon; R; T; \|u_0\|_{X_{s,m}}; \|\partial_x^lu_0\|_{L^2(x_0,\infty)}).
\end{equation}
\end{theorem}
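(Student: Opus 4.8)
The plan is to reproduce the inductive energy scheme of Section~\ref{S:3} with two cosmetic changes: the single nonlinearity $u\partial_x^3u$ is replaced by the general polynomial $Q$, and every occurrence of $\|u_0\|_{H^s}$ is replaced by $\|u_0\|_{X_{s,m}}$. Since $X_{s,m}\subset H^s$, all Sobolev embeddings and all uses of Lemmas~1--3 survive unchanged. The Kenig--Ponce--Vega theory for \eqref{KDV5E} furnishes the solution $u\in C([-T,T];X_{s,m})$, and because the admissible exponent satisfies $s\geq s_0>7/2$ for these fifth order models, the coefficient $\partial_x^3u$ and all lower derivatives are bounded in $L_T^\infty L_x^\infty$ by $\|u\|_{L_T^\infty H_x^s}$; thus, in contrast to Section~\ref{S:3}, no loss of derivatives occurs at the level of the base regularity. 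As before I would first perform the computation for smooth data and then pass to arbitrary $u_0\in X_{s,m}$ by the mollification argument $u_0^\mu=\rho_\mu\ast u_0$ of Section~\ref{S:3}, using that mollification does not increase $\|\partial_x^lu_0\|_{L^2(x_0,\infty)}$ and keeps $\|u_0^\mu\|_{X_{s,m}}$ uniformly bounded in $\mu$.

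First I would differentiate \eqref{KDV5E} $l$ times and apply \eqref{ENERGY} with $\psi(x,t)=\chi(x+\nu t;\epsilon,b)$, reaching an inequality of the shape \eqref{PINEQ} whose nonlinear term is $B=-2\int\partial_x^lu\,\partial_x^lQ\,\chi(x+\nu t)\;dx$. Expanding $\partial_x^lQ$ by the Leibniz rule, I would sort the monomials by the largest derivative-order carried by a single factor; because $Q$ depends only on $u,\partial_xu,\partial_x^2u,\partial_x^3u$, this order never exceeds $l+3$. Monomials all of whose factors have order $\leq l$ are controlled by \eqref{LinftyTrick}, after absorbing the low-order factors into a single $L_x^\infty$ coefficient, together with the inductive hypothesis on the cases $l-1,l-2,\dots$, exactly as the terms $B_2$ and $B_{12}$ in \eqref{PLB}. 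A single integration by parts reduces every monomial carrying a factor of order $l+2$ or $l+3$ either to a piece weighted by $\chi'$, absorbed by the smoothing term $\int(\partial_x^{l+2}u)^2\chi'$ on the left, or to the principal loss term $\int\Phi\,(\partial_x^{l+1}u)^2\,\chi(x+\nu t)\;dx$, where $\Phi$ is an explicit polynomial read off from the first partial derivatives of $Q$ with respect to its third and fourth arguments. For the models of Theorem~A, in which $\partial_x^3u$ enters only linearly, $\Phi$ has derivative-order at most $3$ and is therefore bounded in $L_x^\infty$.

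The step I expect to be the main obstacle is the elimination of this loss term by Kwon's corrected-energy method. I would add a correction $\frac{d}{dt}\int\Psi(u,\partial_xu,\partial_x^2u)(\partial_x^{l-1}u)^2\chi(x+\nu t)\;dx$, compute its time derivative through \eqref{KDV5E}, and choose the polynomial $\Psi$ so that the contribution of the linear part $\partial_x^5u$ cancels $\Phi(\partial_x^{l+1}u)^2\chi$ up to controllable remainders. This is the general counterpart of the identities \eqref{P2B_KWON}, \eqref{P3B_KWON} and \eqref{PLC3}; the difficulty is purely combinatorial, as a polynomial $Q$ of high degree generates many more remainder terms. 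The saving feature is structural: the nonlinearity has derivative-order $3$, two below the linear order $5$, so after matching the top coefficient every remainder retains at most one factor of order $l+1$ (necessarily weighted by $\chi'$) and otherwise factors of order $\leq l$, and is disposed of exactly as $C_1,\dots,C_5$ were in Section~\ref{S:3}.

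With the loss removed, integration in time yields the Gronwall inequality
\[
y(t)\leq c+\int_0^t\bigl(c_0+c_1\|\partial_x^3u(\tau)\|_{L_x^\infty}\bigr)y(\tau)\;d\tau,
\qquad
y(t)=\int(\partial_x^lu)^2\chi(x+\nu t)\;dx+\int_0^t\!\!\int(\partial_x^{l+2}u)^2\chi'(x+\nu\tau)\;dxd\tau,
\]
whence $\sup_{0\leq t\leq T}y(t)$ is bounded by a constant of the form \eqref{PECONST1}. The local smoothing bound \eqref{PECONST2} follows by discarding the nonnegative supremum term and restricting the $\chi'$ integral to the window $[x_0+\epsilon,x_0+R]$. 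The low cases $l=1,\dots,6$ would be treated individually, as in Section~\ref{S:3}, since the corrector $\Psi(\cdot)(\partial_x^{l-1}u)^2$ only becomes available once $\partial_x^{l-1}u$ is an admissible interior factor; the limiting argument then transfers the estimates to all $u_0\in X_{s,m}$.
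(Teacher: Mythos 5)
There is a genuine gap at the step you yourself flag as the main obstacle: the claim that Kwon's corrected-energy method can always eliminate the loss term $\int\Phi\,(\partial_x^{l+1}u)^2\chi\,dx$ by choosing a polynomial corrector $\Psi(u,\partial_xu,\partial_x^2u)$. When you differentiate $\int\Psi(\partial_x^{l-1}u)^2\chi\,dx$ through the equation, the only source of $(\partial_x^{l+1}u)^2\chi$ at top order is $2\int\Psi\,\partial_x^{l-1}u\,\partial_x^{l+4}u\,\chi\,dx$, and after integrating by parts its coefficient is a multiple of $\partial_x\Psi$ (this is exactly the mechanism in the paper's identity $\widetilde{C_3}=\tfrac{5}{2l-3}B_{11}+C_3$, where $\Psi=u$ produces the coefficient $\partial_xu$). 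The cancellation therefore requires $\Phi$ to be a perfect $x$-derivative of a local polynomial in $u$ and its derivatives, and this fails inside the theorem's class: for $Q=u\partial_x^2u$ the loss term is $\int u(\partial_x^{l+1}u)^2\chi\,dx$ with $\Phi=u$, whose antiderivative is nonlocal, so no polynomial corrector exists. Your restriction to ``Theorem A models in which $\partial_x^3u$ enters only linearly'' does not rescue this: the stated class allows arbitrary polynomial dependence on $\partial_x^3u$, and even within Theorem A the piece $Q_1$ contains $u\partial_x^2u$. This is precisely why the paper's proof splits into three cases, with the correction technique confined to Case 2 (quadratic terms \emph{excepting} $u\partial_x^2u$).

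The missing idea is the paper's Case 3, and it is also the reason the weighted spaces $X_{s,m}$ appear in the hypotheses at all --- they are not the cosmetic substitution your first paragraph makes them. For non-exact loss coefficients the paper abandons the corrector and instead estimates
\begin{equation*}
\left|\int_0^t\!\!\int u(\partial_x^{l+1}u)^2\chi\,dxd\tau\right|
\leq \Bigl(\sum_{j\in\mathbb{Z}}\sup_{\substack{0\leq t\leq T\\ j\leq x\leq j+1}}|u(x,t)|\Bigr)
\Bigl(\sup_{j\in\mathbb{Z}}\int_0^T\!\!\int_j^{j+1}(\partial_x^{l+1}u)^2\chi\,dxd\tau\Bigr),
\end{equation*}
bounding the second factor by the inductive case $l-1$ and the first by Theorem \ref{SUM}: a two-variable Sobolev inequality (Lemma \ref{SOB2XX}) reduces the sum to $\|\partial_{xt}u\|_{L_T^1L_x^1}$-type quantities, and the equation converts the time derivative into high spatial derivatives with weights, e.g. $\|x\partial_x^6u\|_2^2\lesssim \|x^2u\|_2\|\partial_x^{12}u\|_2+\cdots$, which forces $s_0\geq12$, $m_0\geq4$ already for $k=0$ --- far beyond what the local well-posedness theory requires. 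Since your argument never uses the decay hypothesis $u_0\in X_{s,m}$, it cannot prove the theorem as stated; it establishes only the paper's Cases 1 and 2, i.e.\ nonlinearities whose loss coefficients are exact derivatives.
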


\begin{remark}
Due to the similarities in the proof technique, the comments in this section can be modified to prove extensions of Theorems \ref{DECAY} and \ref{DIR} to the class \eqref{KDV5E}.
\end{remark}
\begin{remark}
Establishing local well-posedness of the IVP \eqref{KDV5E} in the weighted Sobolev spaces $X_{s,m}$ imposes minimum values on $m$ and $s$, see for instance the contraction principle technique used by Kenig, Ponce and Vega in \cite{MR1321214} and \cite{MR1195480}. Thus the values of $m_0$ and $s_0$ are determined by considering both the local well-posedness as well as our proof of the propagation of regularity. As we see below, these considerations may differ.
\end{remark}
\begin{remark}
A slight modification to the energy inequality \eqref{ENERGY} allows one to loosen the restriction that $Q$ not contain any linear terms. In particular, the theorem applies to the model \eqref{E2} when coupled with an appropriate local well-posedness theorem. Provided suitable cutoff functions exist, modifications to \eqref{ENERGY} also extend the technique to a class of higher order equations containing the KdV heirarchy.
\end{remark}

\begin{proof}
Though not strictly necessary, we break the proof into cases based on the form of the nonlinearity $Q(u)$. We treat the case $x_0=0$ as the argument is translation invariant. Following the proof of Theorem \ref{PROP}, let $u$ be a smooth solution of the IVP \eqref{KDV5E}. Differentiating the equation $l$-times, applying \eqref{ENERGY} and using properties of $\chi$, we arrive at
\begin{align} \label{PE_INEQ}
& \frac{d}{dt} \int (\partial_x^lu)^2\chi(x+\nu t) \; dx
	+ \int (\partial_x^{l+2}u)^2 \chi'(x+\nu t) \; dx \notag \\
	& \qquad\qquad\qquad
		\lesssim \int (\partial_x^lu)^2\chi'(x+\nu t;\epsilon/3,b+\epsilon) \; dx
			+ \int\partial_x^lu\partial_x^lQ(u)\chi(x+\nu t) \; dx \notag \\
	& \qquad\qquad\qquad
		=: A+B
\end{align}
The proof proceeds by induction on $l\in\mathbb{Z}^+$. For a given nonlinearity $Q(u)$, there exists $l_0\in\mathbb{Z}^+$ such that the cases $l=0,1,\dots,l_0$ can be proved by choosing $s_0$ large enough. Thus it suffices to prove only the inductive step. We describe the formal calculations, omitting the limiting argument.

Integrating in the time interval $[0,t]$ and applying the $l-2$ result we have
\begin{equation} \label{PEA_INEQ}
\left|\int_0^t A \; d\tau\right|
	\leq c(\nu;\epsilon;b)\int_0^T \int (\partial_x^lu)^2\chi'(x+\nu\tau) \; dxd\tau
	\leq c_0
\end{equation}
where $0\leq t\leq T$ and $c_0$ as in \eqref{PECONST1}. We now turn to term $B$.
\newline

\underline{Case 1}
Suppose $Q$ is independent of both $\partial_x^2u$ and $\partial_x^3u$. Then there exists $N\in\mathbb{Z}^+$ such that, after integrating by parts, $B$ is a linear combination of terms of the form
\begin{equation*}
\int u^{j_0}(\partial_xu)^{j_1}(\partial_x^2u)^{j_2}(\partial_x^lu)^2 \chi(x+\nu t) \; dx,
	\qquad j_0,j_1,j_2 \leq N,
\end{equation*}
and
\begin{equation*}
\int u^{j_0}(\partial_xu)^{j_1}(\partial_x^2u)^{j_2}(\partial_x^ku)^2 \chi^{(j_3)}(x+\nu t) \; dx,
	\qquad j_0,j_1,j_2 \leq N
\end{equation*}
where $1 \leq j_3 \leq 5$ and $3 \leq k \leq l+1$.
Hence no loss of derivatives occurs. Integrating in the time interval $[0,t]$, applying the induction hypothesis and the Sobolev embedding
\begin{equation*}
\left|\int_0^t B \; d\tau\right| \leq c_0 + c_1 \int_0^t\int(\partial_x^lu)^2\chi(x+\nu\tau) \; dxd\tau
\end{equation*}
provided $s_0>7/2$, with $c_0$ and $c_1$ as in \eqref{PECONST1}. Combining with \eqref{PEA_INEQ}, after integrating \eqref{PE_INEQ} in time and using the hypothesis on the initial data we have
\begin{align}
y(t)
	&:= \int (\partial_x^lu)^2\chi(x+\nu t) \; dx
		+ \int_0^t\int (\partial_x^{l+2}u)^2 \chi'(x+\nu\tau) \; dxd\tau \notag \\
	&\leq c_0 + c_1\int_0^t\int(\partial_x^lu)^2\chi(x+\nu\tau) \; dxd\tau \notag \\
	&\leq c_0 + c_1\int_0^ty(\tau)\;d\tau.
\end{align}
The result follows by an application of Gronwall's inequality. The value of $m_0$ is determined by the LWP theory.
\newline

\underline{Case 2}
Suppose $Q$ is a linear combination of quadratic terms (with the exception of $u\partial_x^2u$). After integrating by parts $B$ is a linear combination of terms of the form
\begin{equation*}
\int\partial_x^ju(\partial_x^{l+1}u)^2 \chi(x+\nu t) \; dx,
	\qquad 1 \leq j \leq 4
\end{equation*}
as well as lower order terms. The correction technique of Theorem \ref{PROP} can be modified to account for this loss of derivatives. For example, if $Q(u)=\partial_x^2u\partial_x^3u$, then integrating by parts and supressing coefficients
\begin{equation*}
B = \int\partial_x^2u(\partial_x^{l+1}u)^2 \chi(x+\nu t) \; dx + \int\partial_x^4u(\partial_x^lu)^2 \chi(x+\nu t) \; dx + \tilde{B}
\end{equation*}
where $\tilde{B}$ is controlled by induction. For the second term, we impose $s_0>9/2$ to control $\|\partial_x^4u\|_{L_x^\infty}$. For the first term, consider the correction
\begin{equation*}
\frac{d}{dt} \int\partial_xu(\partial_x^{l-1}u)^2\chi(x+\nu t) \; dx.
\end{equation*}
In general, more than one correction may be necessary. The remainder of the proof is similar to Theorem \ref{PROP}, thus the value of $m_0$ is determined by the LWP theory. Note that if $Q$ additionally contained higher degree terms independent of $\partial_x^2u$ and $\partial_x^3u$, the above argument applies. Equations in the class \eqref{KDV5G} are of this form.
\newline

\underline{Case 3}
The remaining nonlinearities in the class \eqref{KDV5E} exhibit a loss of derivatives which, in general, cannot be controlled by the correction technique. We illustrate the argument in this case by focusing on the example equation
\begin{equation} \label{KDVXX}
\partial_tu - \partial_x^5u = u\partial_x^2u.
\end{equation}
The IVP associated to this equation is locally well-posed in $H^s(\mathbb{R}), s\geq2$, using the contraction mapping principle. However, our modification to the proof of Theorem \ref{PROP} will require the use of weighted Sobolev spaces.

After integrating by parts and supressing coefficients
\begin{equation} \label{BXX}
B = \int u(\partial_x^{l+1}u)^2\chi(x+\nu t) \; dx + \int\partial_x^2u(\partial_x^lu)^2\chi(x+\nu t) \; dx + \tilde{B}
\end{equation}
where $\tilde{B}$ is controlled by induction. Combining with \eqref{PEA_INEQ}, after integrating \eqref{PE_INEQ} in time and using the hypothesis on the initial data we have
\begin{align}
y(t)
	&:= \int (\partial_x^lu)^2\chi(x+\nu t) \; dx + \int_0^t\int (\partial_x^{l+2}u)^2 \chi'(x+\nu\tau) \; dxd\tau \notag \\
	&\leq c_0 + \int_0^t\int\partial_x^2u(\partial_x^lu)^2\chi(x+\nu\tau)\;dxd\tau
		+ \left|\int_0^t\int u(\partial_x^{l+1}u)^2\chi(x+\nu\tau)\;dxd\tau\right| \notag \\
	&\leq c_0 + c_1\int_0^t y(\tau) \; d\tau
		+ \left|\int_0^t\int u(\partial_x^{l+1}u)^2\chi(x+\nu\tau)\;dxd\tau\right|.
\end{align}
Focusing on the last term in the above line,
\begin{align} \label{LOSSXX}
& \left|\int_0^t\int u(\partial_x^{l+1}u)^2\chi(x+\nu\tau)\;dxd\tau\right| \notag \\
	&\qquad\qquad\le
		\left(\sum_{j\in\mathbb{Z}} \sup_{\substack{0 \leq t \leq T \\ j\leq x \leq j+1}} |u(x,t)| \right)
			\left(\sup_{j\in\mathbb{Z}} \int_0^T\int_j^{j+1}(\partial_x^{l+1}u)^2\chi(x+\nu\tau)\;dxd\tau \right).
\end{align}
We check three cases to show the inductive case $l-1$ bounds the second factor. First, the integral vanishes for $j+1<\epsilon-\nu T$. For $\epsilon<j$ we apply the inductive hypothesis with $\nu=0$. Otherwise we utilize a pointwise bound on $\chi$
\begin{equation*}
\int_0^T\int_j^{j+1}(\partial_x^{l+1}u)^2\chi(x+\nu\tau)\;dxd\tau
	\lesssim \int_0^T\int(\partial_x^{l+1}u)^2\chi'(x+\nu\tau;\epsilon/5,\nu T+ \epsilon)\;dxd\tau.
\end{equation*}

The technique for bounding the first factor is described in the next theorem. In general, there exists a nonnegative integer $n$ depending on the form of the polynomial $Q$ such that the following quantities must be estimated:
\begin{equation*}
\sum_{j\in\mathbb{Z}} \sup_{\substack{0 \leq t \leq T \\ j\leq x \leq j+1}} |\partial_x^ku(x,t)|,
	\quad k=0,1,\dots,n,
\end{equation*}
assuming $u$ is a Schwarz solution of IVP \eqref{KDV5E}. With such an estimate in hand, the result follows by an application of Gronwall's inequality.
\end{proof}

\begin{theorem} \label{SUM}
Let $k\in\mathbb{Z}^+\cup\{0\}$ and $u$ be a Schwartz solution of the IVP \eqref{KDV5E} corresponding to initial data $u_0\in\mathscr{S}(\mathbb{R})$. Then there exists a nonnegative integer $m_0$ (depending on $Q$ and $k$) and positive real number $s_0\geq2m_0$ such that
\begin{equation*}
\sum_{j\in\mathbb{Z}} \sup_{\substack{0 \leq t \leq T \\ j\leq x \leq j+1}} |\partial_x^ku(x,t)|
	\leq c(T;\|u_0\|_{X_{s_0,m_0}}).
\end{equation*}
\end{theorem}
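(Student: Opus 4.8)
The plan is to convert the $\ell^1$-type sum over unit intervals into a single weighted space-time integral, which can then be routed through the endpoint norms of $X_{s_0,m_0}$ and, via the weighted well-posedness theory of Kenig, Ponce and Vega \cite{MR1321214, MR1195480}, controlled by $\|u_0\|_{X_{s_0,m_0}}$. First I would partition time into unit intervals and work on the space-time boxes $Q_{i,j}=[j,j+1]\times[i,i+1]$, summing the resulting finitely many values of $i=0,\dots,\ceil{T}-1$ at the end. On each such box the two-dimensional Sobolev embedding $H^2(Q_{i,j})\hookrightarrow L^\infty(Q_{i,j})$, with a constant uniform in $(i,j)$, gives
\begin{equation*}
\Big(\sup_{Q_{i,j}}|\partial_x^k u|\Big)^2 \lesssim \sum_{0\le a+b\le 2}\int_i^{i+1}\!\!\int_j^{j+1}|\partial_t^a\partial_x^{b+k} u|^2\;dxdt.
\end{equation*}

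Next I would remove the time derivatives using the equation. Writing \eqref{KDV5E} as $\partial_t u = \partial_x^5 u - Q(u,\partial_x u,\partial_x^2 u,\partial_x^3 u)$ and iterating, each factor of $\partial_t$ is traded for five spatial derivatives together with spatial derivatives of the polynomial nonlinearity; since $a+b\le 2$, the right-hand side above is bounded by a local space-time integral of $(\partial_x^c u)^2$ for $c\le k+10$ plus products arising from $\partial_x^c Q(u)$. I would then sum over $j$ after inserting the weight $\langle j\rangle^{-p}$ with $p>1/2$ and applying Cauchy--Schwarz:
\begin{equation*}
\sum_{j\in\mathbb{Z}}\sup_{Q_{i,j}}|\partial_x^ku|
\le\Big(\sum_{j\in\mathbb{Z}}\langle j\rangle^{-2p}\Big)^{1/2}
\Big(\sum_{j\in\mathbb{Z}}\langle j\rangle^{2p}\sup_{Q_{i,j}}|\partial_x^ku|^2\Big)^{1/2}.
\end{equation*}
The first factor converges, and since $\langle j\rangle\sim\langle x\rangle$ on $[j,j+1]$ the second factor is dominated by the genuine space-time integral $\int_i^{i+1}\!\int\langle x\rangle^{2p}\big(\sum_{c\le k+10}(\partial_x^c u)^2 + |\partial_x^c Q(u)|^2\big)\,dxdt$. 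The key structural point is that the box embedding has already absorbed the time variable into an integral, so the problematic interchange of $\sum_j$ with $\sup_t$ never arises.

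It then remains to bound $\int_i^{i+1}\!\int\langle x\rangle^{2p}(\partial_x^c u)^2\,dxdt \le \sup_{0\le t\le T}\|\langle x\rangle^{p}\partial_x^c u(t)\|_2^2$. Here I would invoke the weighted interpolation inequality $\|\langle x\rangle^{p}\partial_x^c w\|_2\lesssim \|w\|_{H^s} + \||x|^{m/2}w\|_2^{\theta}\|w\|_{H^s}^{1-\theta}$, valid whenever $2p/m + c/s \le 1$, which is precisely an interpolation between the two norms defining $X_{s,m}$. Choosing $m_0$ and $s_0\ge 2m_0$ large enough, depending on $k$ and on $Q$, to accommodate $c\le k+10$ and $p=1$ makes this inequality available; for instance $m_0=4$ and $s_0=2k+20$ satisfies $2p/m_0+c/s_0\le1$ while respecting $s_0\ge 2m_0$, with the presence of $Q$ possibly forcing larger values. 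The nonlinear products $\partial_x^c Q(u)$ are handled by the Leibniz rule, placing the weight on one factor and estimating the remaining factors in $L^\infty$ via $H^{s_0}\hookrightarrow L^\infty$, so that they too are controlled by powers of the same interpolated quantity. Persistence of the solution in $X_{s_0,m_0}$ finally yields $\sup_{0\le t\le T}\|u(t)\|_{X_{s_0,m_0}}\le c(T;\|u_0\|_{X_{s_0,m_0}})$, and summing over the $\ceil{T}$ time-boxes completes the estimate.

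The main obstacle I anticipate is the bookkeeping that simultaneously satisfies three constraints: the box embedding forces control of roughly $k+10$ spatial derivatives through the two time derivatives, the summation demands a weight of power $2p>1$, and the weighted interpolation must carry both of these through the norms $\|\cdot\|_{H^{s_0}}$ and $\||x|^{m_0/2}\cdot\|_2$ while respecting the constraint $s_0\ge 2m_0$ inherited from the fifth-order weighted well-posedness theory. Estimating the nonlinear contributions $\partial_x^c Q(u)$ in weighted norms is the one genuinely nontrivial computation, but it becomes routine once the interpolation inequality and the algebra structure of $X_{s,m}$ for large $s$ are in place.
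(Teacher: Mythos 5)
Your argument is correct in outline and shares the paper's overall architecture---localize to unit intervals, convert the sup into local space--time integrals via a Sobolev-type embedding, trade time derivatives for spatial ones through the equation, and close with persistence of the solution in $X_{s_0,m_0}$ from the Kenig--Ponce--Vega weighted theory---but the mechanics are genuinely different in two places, and the differences are worth recording. The paper's key lemma (Lemma \ref{SOB2XX}) is anisotropic and $L^1$-based: it bounds the sup over the full strip $[j,j+1]\times[0,T]$ by $L^1$ space--time norms of $f,\partial_xf,\partial_tf,\partial_{xt}f$, so that summing over $j$ simply reassembles the global $L^1_x$ norm with \emph{no} weight spent on summability; the weight enters exactly once, through $\|f\|_{L^1_x}\le\|f\|_2+\|xf\|_2$, and only one time derivative (the mixed $\partial_{xt}$) ever appears, so after substituting the equation the worst term is $\|x\,\partial_x^6u\|_2$. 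Your $H^2(Q_{i,j})\hookrightarrow L^\infty$ embedding instead forces two time derivatives (hence roughly $k+10$ spatial derivatives after substitution, versus the paper's $k+6$) and requires the Cauchy--Schwarz weight $\langle j\rangle^{2p}$, $p>1/2$; both costs inflate $(s_0,m_0)$ relative to the paper's $s_0\ge12$, $m_0\ge4$ for $k=0$, which is harmless since the theorem is qualitative. Second, where the paper controls the weighted high-derivative norm by a self-contained integration by parts, $\int x^2(\partial_x^6u)^2\,dx=\int u\,\partial_x^6(x^2\partial_x^6u)\,dx\lesssim\|x^2u\|_2\|\partial_x^{12}u\|_2+\cdots$, you invoke the standard weighted interpolation between $\|J^sf\|_2$ and $\||x|^{m/2}f\|_2$; this is legitimate and more modular, with two small caveats: your form $\|\langle x\rangle^p\partial_x^cw\|_2$ differs from the usual $\|J^{(1-\theta)s}(\langle x\rangle^{\theta m/2}w)\|_2$ by commutators $[\partial_x^c,\langle x\rangle^p]$, which are lower order since derivatives of the weight are dominated by the weight, and in your Leibniz treatment of $\partial_x^cQ(u)$ the weighted factor can carry up to $c+3\sim k+13$ derivatives, so $s_0$ must absorb that as well---consistent with the caveat you already state. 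Both routes terminate identically in persistence in $X_{s,m}$ with $s\ge2m$, so your proposal is a valid, if slightly more expensive, alternative to the paper's proof.
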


The idea is to apply a Sobolev type inequality in the $t$-variable and show that the resulting summation converges by imposing enough spatial decay on the solution. Acheiving this goal requires the following lemma.

\begin{lemma} \label{SOB2XX}
If $f \in C^2(\mathbb{R}^2)$, then
\begin{align*}
\sup_{\substack{0 \leq t \leq T \\ 0 \leq x \leq L}} |f(x,t)|
	&\leq \int_0^T\int_0^L |\partial_{xt} f(y,s)| \; dyds + \frac{1}{TL} \int_0^T\int_0^L |f(y,s)| \; dyds \\
	&\qquad\qquad
		\frac1L \int_0^T\int_0^L |\partial_tf(y,s)| \; dyds + \frac1T \int_0^T\int_0^L |\partial_xf(y,s)| \; dyds
\end{align*}
for any $L,T>0$.
\end{lemma}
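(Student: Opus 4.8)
The plan is to deduce this two-dimensional estimate by iterating the elementary one-dimensional Sobolev inequality
\[
\sup_{0\le \xi\le a}|\phi(\xi)| \le \frac1a\int_0^a |\phi(\eta)|\,d\eta + \int_0^a|\phi'(\eta)|\,d\eta,
\]
valid for any absolutely continuous $\phi$ on $[0,a]$. This in turn follows at once from the averaging identity $\phi(\xi)=\frac1a\int_0^a\phi(\eta)\,d\eta + \frac1a\int_0^a\int_\eta^\xi\phi'(\zeta)\,d\zeta\,d\eta$ upon bounding the inner integral by $\int_0^a|\phi'|$. The essential observation is that applying this bound once in the $t$-variable and then once in the $x$-variable reproduces \emph{exactly} the four terms on the right-hand side, together with the scaling-correct constants $1/(TL),\,1/L,\,1/T,\,1$, which arise from the two $1/a$ factors.

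First I would freeze $x$ and apply the one-dimensional inequality on $[0,T]$ to the map $t\mapsto f(x,t)$, obtaining
\[
\sup_{0\le t\le T}|f(x,t)| \le P(x)+Q(x), \quad P(x):=\frac1T\int_0^T|f(x,s)|\,ds,\quad Q(x):=\int_0^T|\partial_tf(x,s)|\,ds.
\]
Since this holds for every $x$, it then suffices to estimate $\sup_{0\le x\le L}P(x)$ and $\sup_{0\le x\le L}Q(x)$ separately and add the results, using $\sup_{x,t}|f|\le\sup_x P+\sup_x Q$.

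Next I would apply the one-dimensional inequality on $[0,L]$ to each of $P$ and $Q$. Although these are defined through absolute values and need not be $C^1$, the elementary bound $\bigl||a|-|b|\bigr|\le|a-b|$ combined with the fundamental theorem of calculus shows they are absolutely continuous with $|P'(x)|\le\frac1T\int_0^T|\partial_xf(x,s)|\,ds$ and $|Q'(x)|\le\int_0^T|\partial_{xt}f(x,s)|\,ds$. Feeding these into the one-dimensional lemma gives
\[
\sup_x P(x)\le \frac1{TL}\int_0^T\!\!\int_0^L|f|\,dy\,ds + \frac1T\int_0^T\!\!\int_0^L|\partial_xf|\,dy\,ds
\]
and
\[
\sup_x Q(x)\le \frac1L\int_0^T\!\!\int_0^L|\partial_tf|\,dy\,ds + \int_0^T\!\!\int_0^L|\partial_{xt}f|\,dy\,ds,
\]
and adding these two inequalities yields precisely the claimed bound.

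The only point requiring genuine care — and hence the main, though mild, obstacle — is the justification that $P$ and $Q$ may legitimately be inserted into the one-dimensional inequality despite being built from absolute values; this is exactly where $\bigl||a|-|b|\bigr|\le|a-b|$ enters, and because $f\in C^2$ all integrands are continuous, so no measure-theoretic subtleties arise. I would emphasize that iterating the one-dimensional estimate, rather than performing a single averaging of the rectangle identity $f(x,t)=f(y,t)+f(x,s)-f(y,s)+\int_y^x\!\int_s^t\partial_{\sigma\tau}f$, is what produces the sharp constant $1/(TL)$ on the $|f|$-term rather than a larger numerical multiple.
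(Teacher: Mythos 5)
Your proof is correct and complete. The paper in fact states this lemma without proof (it is invoked as a standard Sobolev-type inequality before the proof of Theorem~\ref{SUM}), so there is no argument to diverge from; your iterated one-dimensional bound --- first in $t$, then applying the same inequality to $P(x)=\frac1T\int_0^T|f(x,s)|\,ds$ and $Q(x)=\int_0^T|\partial_tf(x,s)|\,ds$, with the Lipschitz bounds $|P'|\le\frac1T\int_0^T|\partial_xf|\,ds$ and $|Q'|\le\int_0^T|\partial_{xt}f|\,ds$ coming from $\bigl||a|-|b|\bigr|\le|a-b|$ --- is exactly the natural route and reproduces the four terms with the stated constants, including the coefficient $\frac1{TL}$ on the $|f|$-term that a single averaging of the rectangle identity would triple. (Incidentally, the displayed statement in the paper is missing a ``$+$'' before the $\frac1L$ term, a typo your derivation implicitly corrects.)
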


We now turn to the proof of Theorem \ref{SUM}.
\begin{proof}
For concreteness, we show details for $k=0$. Applying Lemma \ref{SOB2XX},
\begin{equation*}
\sum_{j\in\mathbb{Z}} \sup_{\substack{0 \leq t \leq T \\ j\leq x \leq j+1}} |u(x,t)|
	\lesssim_T \|\partial_{xt}u\|_{L_T^1 L_x^1} + \|\partial_xu\|_{L_T^1 L_x^1} + \|\partial_tu\|_{L_T^1 L_x^1} + \|u\|_{L_T^1 L_x^1}.
\end{equation*}
Focusing on the worst term $\|\partial_{xt}u\|_{L_T^1L_x^1}$ and applying
\begin{equation*}
\|f\|_1 \leq \|f\|_2 + \|xf\|_2
\end{equation*}
we arrive at
\begin{equation*}
\|\partial_{xt}u\|_{L_T^1L_x^1}
	\lesssim_T \|\partial_{xt}u\|_{L_T^\infty L_x^2} + \|x\partial_{xt}u\|_{L_T^\infty L_x^2}.
\end{equation*}
Looking at the second term and using the differential equation we have
\begin{equation*}
\|x\partial_{xt}u\|_2
	\leq \|x\partial_x^6u(t)\|_2
		+ \|x\partial_x(u\partial_x^2u)\|_2
	=: A+ B.
\end{equation*}
Then
\begin{align*}
A^{2}
	&= \int x^{2}(\partial_x^6u)^2dx\\
	&= \int u\partial_x^6(x^2\partial_x^6u)dx\\
	&= \int x^{2}u\partial_x^{12}udx
		+12\int xu\partial_x^{11}udx
		+30\int u\partial_x^{10}udx\\
	&\lesssim \|x^{2}u\|_2\|\partial_x^{12}u\|_2
		+ \|xu\|_2\|\partial_x^{11}u\|_2
		+ \|u\|_2\|\partial_x^{10}u\|_2.
\end{align*}
and so we impose $s_0\geq12, m_0\geq4$ (compared to the $H^2(\mathbb{R})$ local well-posedness). The estimates for the remaining terms are similar, completing the case $k=0$.
\end{proof}
\end{section}

\vskip3mm
\noindent {\bf Acknowledgments.}
A portion of this work was completed while J.S was visiting
the Department of Mathematics at the University of
California, Santa Barbara whose hospitality he gratefully
acknowledges. The authors thank Professor Gustavo Ponce
for giving us valuable comments.
J.S is partially supported by JSPS, Strategic Young Researcher Overseas
Visits Program for Accelerating Brain Circulation and by MEXT,
Grant-in-Aid for Young Scientists (A) 25707004.

\begin{bibdiv}
\begin{biblist}

\bib{MR0463715}{article}{
   author={Benney, D. J.},
   title={A general theory for interactions between short and long waves},
   journal={Studies in Appl. Math.},
   volume={56},
   date={1976/77},
   number={1},
   pages={81--94},
   review={\MR{0463715 (57 \#3657)}},
}

\bib{MR1209299}{article}{
   author={Bourgain, J.},
   title={Fourier transform restriction phenomena for certain lattice
   subsets and applications to nonlinear evolution equations. I.
   Schr\"odinger equations},
   journal={Geom. Funct. Anal.},
   volume={3},
   date={1993},
   number={2},
   pages={107--156},
   issn={1016-443X},
   review={\MR{1209299 (95d:35160a)}},
   doi={10.1007/BF01896020},
}

\bib{MR1215780}{article}{
   author={Bourgain, J.},
   title={Fourier transform restriction phenomena for certain lattice
   subsets and applications to nonlinear evolution equations. II. The
   KdV-equation},
   journal={Geom. Funct. Anal.},
   volume={3},
   date={1993},
   number={3},
   pages={209--262},
   issn={1016-443X},
   review={\MR{1215780 (95d:35160b)}},
   doi={10.1007/BF01895688},
}

\bib{MR0336122}{article}{
   author={Gardner, C. S.},
   author={Greene, J. M.},
   author={Kruskal, M. D.},
   author={Muira, R. M.},
   title={Korteweg-deVries equation and generalization. VI. Methods for
   exact solution},
   journal={Comm. Pure Appl. Math.},
   volume={27},
   date={1974},
   pages={97--133},
   issn={0010-3640},
   review={\MR{0336122 (49 \#898)}},
}

\bib{MR3096990}{article}{
   author={Guo, Z.},
   author={Kwak, C.},
   author={Kwon, S.},
   title={Rough solutions of the fifth-order KdV equations},
   journal={J. Funct. Anal.},
   volume={265},
   date={2013},
   number={11},
   pages={2791--2829},
   issn={0022-1236},
   review={\MR{3096990}},
   doi={10.1016/j.jfa.2013.08.010},
}

\bib{MR3279353}{article}{
   author={Isaza, P.},
   author={Linares, F.},
   author={Ponce, G.},
   title={Decay properties for solutions of fifth order nonlinear dispersive
   equations},
   journal={J. Differential Equations},
   volume={258},
   date={2015},
   number={3},
   pages={764--795},
   issn={0022-0396},
   review={\MR{3279353}},
   doi={10.1016/j.jde.2014.10.004},
}

\bib{IsazaLinaresPonce2014a}{article}{
   author={Isaza, P.},
   author={Linares, F.},
   author={Ponce, G.},
   title={Propagation of regularity and decay of solutions to the k-generalized Korteweg-de Vries equation},
   eprint={http://arxiv.org/abs/1407.5110},
   date={2014},
}

\bib{IsazaLinaresPonce2014b}{article}{
   author={Isaza, P.},
   author={Linares, F.},
   author={Ponce, G.},
   title={On the propagation of regularities in solutions of the Benjamin-Ono equation},
   eprint={http://arxiv.org/abs/1409.2381},
   date={2014},
}

\bib{MR759907}{article}{
   author={Kato, T.},
   title={On the Cauchy problem for the (generalized) Korteweg-de Vries
   equation},
   conference={
      title={Studies in applied mathematics},
   },
   book={
      series={Adv. Math. Suppl. Stud.},
      volume={8},
      publisher={Academic Press},
      place={New York},
   },
   date={1983},
   pages={93--128},
   review={\MR{759907 (86f:35160)}},
}

\bib{KenigPilod2012}{article}{
   author={Kenig, C. E.},
   author={Pilod, D.},
   title={Well-posedness for the fifth-order KdV equation in the energy space},
   eprint={http://arxiv.org/abs/1205.0169},
   date={2012},
}

\bib{MR1195480}{article}{
   author={Kenig, C. E.},
   author={Ponce, G.},
   author={Vega, L.},
   title={Higher-order nonlinear dispersive equations},
   journal={Proc. Amer. Math. Soc.},
   volume={122},
   date={1994},
   number={1},
   pages={157--166},
   issn={0002-9939},
   review={\MR{1195480 (94k:35073)}},
   doi={10.2307/2160855},
}

\bib{MR1321214}{article}{
   author={Kenig, C. E.},
   author={Ponce, G.},
   author={Vega, L.},
   title={On the hierarchy of the generalized KdV equations},
   conference={
      title={Singular limits of dispersive waves},
      address={Lyon},
      date={1991},
   },
   book={
      series={NATO Adv. Sci. Inst. Ser. B Phys.},
      volume={320},
      publisher={Plenum, New York},
   },
   date={1994},
   pages={347--356},
   review={\MR{1321214 (96a:35172)}},
}

\bib{MR2172940}{article}{
   author={Koch, H.},
   author={Tzvetkov, N.},
   title={Nonlinear wave interactions for the Benjamin-Ono equation},
   journal={Int. Math. Res. Not.},
   date={2005},
   number={30},
   pages={1833--1847},
   issn={1073-7928},
   review={\MR{2172940 (2006f:35245)}},
   doi={10.1155/IMRN.2005.1833},
}

\bib{MR2350033}{article}{
   author={Koch, H.},
   author={Tzvetkov, N.},
   title={On finite energy solutions of the KP-I equation},
   journal={Math. Z.},
   volume={258},
   date={2008},
   number={1},
   pages={55--68},
   issn={0025-5874},
   review={\MR{2350033 (2008j:35157)}},
   doi={10.1007/s00209-007-0156-x},
}

\bib{MR2455780}{article}{
   author={Kwon, S.},
   title={On the fifth-order KdV equation: local well-posedness and lack of
   uniform continuity of the solution map},
   journal={J. Differential Equations},
   volume={245},
   date={2008},
   number={9},
   pages={2627--2659},
   issn={0022-0396},
   review={\MR{2455780 (2010d:35309)}},
   doi={10.1016/j.jde.2008.03.020},
}

\bib{MR0235310}{article}{
   author={Lax, P. D.},
   title={Integrals of nonlinear equations of evolution and solitary waves},
   journal={Comm. Pure Appl. Math.},
   volume={21},
   date={1968},
   pages={467--490},
   issn={0010-3640},
   review={\MR{0235310 (38 \#3620)}},
}

\bib{LiPo2015}{book}{
   author={Linares, F.},
   author={Ponce, G.},
   title={Introduction to nonlinear dispersive equations},
   series={Universitext},
   publisher={Springer, New York},
   date={2015},
   edition={2},
   isbn={978-1-4939-2180-5},
   doi={10.1007/978-1-4939-2181-2},
}

\bib{Lisher1974}{article}{
   author={Lisher, E. J.},
   title={Comments on the Use of the Korteweg-de Vries Equation in the Study of Anharmonic Lattices},
   journal={Proceedings of the Royal Society of London. A. Mathematical and Physical Sciences},
   volume={339},
   number={1616},
   date={1974},
   pages={119--126},
   publisher={The Royal Society},
   doi={10.1098/rspa.1974.0112},
}

\bib{MR1885293}{article}{
   author={Molinet, L.},
   author={Saut, J. C.},
   author={Tzvetkov, N.},
   title={Ill-posedness issues for the Benjamin-Ono and related equations},
   journal={SIAM J. Math. Anal.},
   volume={33},
   date={2001},
   number={4},
   pages={982--988 (electronic)},
   issn={0036-1410},
   review={\MR{1885293 (2002k:35281)}},
   doi={10.1137/S0036141001385307},
}

\bib{MR1944575}{article}{
   author={Molinet, L.},
   author={Saut, J. C.},
   author={Tzvetkov, N.},
   title={Well-posedness and ill-posedness results for the
   Kadomtsev-Petviashvili-I equation},
   journal={Duke Math. J.},
   volume={115},
   date={2002},
   number={2},
   pages={353--384},
   issn={0012-7094},
   review={\MR{1944575 (2003h:35240)}},
   doi={10.1215/S0012-7094-02-11525-7},
}

\bib{MR0470533}{article}{
   author={Murray, A. C.},
   title={Solutions of the Korteweg-de Vries equation from irregular data},
   journal={Duke Math. J.},
   volume={45},
   date={1978},
   number={1},
   pages={149--181},
   issn={0012-7094},
   review={\MR{0470533 (57 \#10283)}},
}

\bib{MR755731}{article}{
   author={Olver, P. J.},
   title={Hamiltonian and non-Hamiltonian models for water waves},
   conference={
      title={Trends and applications of pure mathematics to mechanics
      (Palaiseau, 1983)},
   },
   book={
      series={Lecture Notes in Phys.},
      volume={195},
      publisher={Springer, Berlin},
   },
   date={1984},
   pages={273--290},
   review={\MR{755731 (85k:58036)}},
   doi={10.1007/3-540-12916-2\_62},
}

\bib{MR2446185}{article}{
   author={Pilod, D.},
   title={On the Cauchy problem for higher-order nonlinear dispersive
   equations},
   journal={J. Differential Equations},
   volume={245},
   date={2008},
   number={8},
   pages={2055--2077},
   issn={0022-0396},
   review={\MR{2446185 (2009f:35302)}},
   doi={10.1016/j.jde.2008.07.017},
}

\bib{MR1216734}{article}{
   author={Ponce, G.},
   title={Lax pairs and higher order models for water waves},
   journal={J. Differential Equations},
   volume={102},
   date={1993},
   number={2},
   pages={360--381},
   issn={0022-0396},
   review={\MR{1216734 (94c:35153)}},
   doi={10.1006/jdeq.1993.1034},
}

\end{biblist}
\end{bibdiv}

\end{document}